\theoremstyle{remark}
\newtheorem{case}{Case} 
\theoremstyle{plain}
\newcounter{theoremintro} 
\newtheorem{introtheorem}[theoremintro]{Theorem}
\newtheorem*{definition*}{Definition} 
\newtheorem*{theorem*}{Theorem} 
\newtheorem*{lemma*}{Lemma}
\newtheorem*{corollary*}{Corollary}
\newtheorem{theorem}[subsection]{Theorem} 
\newtheorem{lemma}[subsection]{Lemma}
\newtheorem{corollary}[subsection]{Corollary}
\newtheorem{proposition}[subsection]{Proposition}
\newtheorem{question}[subsection]{Question}
\theoremstyle{definition}
\newtheorem{definition}[subsection]{Definition}
\theoremstyle{remark}
\newtheorem{example}[subsection]{Example}
\newtheorem{remark}[subsection]{Remark}
\numberwithin{equation}{section}
\newcommand{\SL}{\mathrm{SL}}
\newcommand{\GL}{\mathrm{GL}}
\newcommand{\cO}{\mathcal{O}}
\newcommand{\FIN}{\mathrm{FIN}}
\newcommand{\FINx}{\mathrm{FIN}^\times}
\newcommand{\Stab}{\mathrm{Stab}}
\newcommand{\F}{\mathbb{F}}
\newcommand{\N}{\mathbb{N}}
\newcommand{\Z}{\mathbb{Z}} 
\newcommand{\Q}{\mathbb{Q}}
\newcommand{\R}{\mathbb{R}}
\newcommand{\bC}{\mathbb{C}}
\newcommand{\Compacts}{\mathcal{K}}
\newcommand{\colim}{\operatorname{colim}}
\newcommand{\cA}{\mathcal{A}}
\newcommand{\cC}{\mathcal{C}}
\newcommand{\cJ}{\mathcal{J}}
\newcommand{\cF}{\mathcal{F}}
\newcommand{\topo}{\mathrm{top}}
\newcommand{\OrFG}{\mathrm{Or}_{\mathcal{F}}(G)}
\newcommand{\OrFING}{\mathrm{Or}_{\mathcal{FIN}}(G)}
\newcommand{\s}[1]{\langle #1 \rangle}
\newcommand{\filcolim@}[2]{%
  \vtop{\m@th\ialign{##\cr
    \hfil$#1\operator@font colim$\hfil\cr
    \noalign{\nointerlineskip\kern1.5\ex@}#2\cr
    \noalign{\nointerlineskip\kern-\ex@}\cr}}%
}
\newcommand{\filcolim}{%
  \mathop{\mathpalette\filcolim@{\rightarrowfill@\textstyle}}\nmlimits@
}
\begin{document} 
\title{$K$-theory of non-commutative Bernoulli Shifts}

\author[S. Chakraborty]{Sayan Chakraborty}
\author[S. Echterhoff] {Siegfried Echterhoff}
\author[J. Kranz]{Julian Kranz}
\author[S. Nishikawa]{Shintaro Nishikawa}

\address{S.C.: Stat-Math unit, Indian Statistical Institute, 203 Barrackpore Trunk Road, Kolkata 700 108, India. \newline \texttt{sayan2008@gmail.com}}
\address{S.E \& J.K.\& S.N.: Mathematisches Institut, Fachbereich Mathematik und Informatik der Universit\"at M\"unster, Einsteinstrasse 62, 48149 M\"unster, Germany. \newline \texttt{echters@uni-muenster.de}, \texttt{j\textunderscore kran05@uni-muenster.de}, \texttt{snishika@uni-muenster.de}.} 

\thanks{This work was funded by the Deutsche Forschungsgemeinschaft (DFG, German Research Foundation) Project-ID 427320536 SFB 1442 and under Germany's Excellence Strategy EXC 2044  390685587, Mathematics M\"{u}nster: Dynamics, Geometry, Structure. The first author was also supported by  DST, Government of India under the \emph{DST-INSPIRE
Faculty Scheme} with Faculty Reg. No. IFA19-MA139.}

\date{\today}

\subjclass[2010]{Primary 46L80, 19K35; Secondary  46L55.}
\keywords{Bernoulli shifts, K-theory, KK-theory, the Going--Down principle, the Baum--Connes conjecture}

\maketitle

\begin{abstract}
For a large class of $C^*$-algebras $A$, we  calculate the $K$-theory of reduced crossed products 
$A^{\otimes G}\rtimes_rG$ of Bernoulli shifts  by groups 
satisfying the Baum--Connes conjecture. In particular, we give explicit formulas for finite-dimensional $C^*$-algebras, UHF-algebras, rotation algebras, and several other examples. As an application, we obtain a formula for the $K$-theory of reduced $C^*$-algebras of wreath products $H\wr G$ for large classes of groups $H$ and $G$. 
Our methods use a generalization of techniques developed by the second named author together with Joachim Cuntz and Xin Li, and a trivialization theorem for finite group actions on UHF algebras developed in a companion paper by the third and fourth named authors.
\end{abstract}

\tableofcontents

\section{Introduction}\label{sec-intro}
Let $G$ be a countable discrete group and let $A$ be a separable unital $C^*$-algebra. Equip the infinite tensor product $A^{\otimes G}$ with the natural Bernoulli shift action (see Section~\ref{sec-general} for the definition). The objective of this paper is to compute the $K$-theory group of the reduced crossed product $A^{\otimes G}\rtimes_r G$ in as many cases as possible.

Building up on the work in \cite{CEL}, Xin Li \cite{XinLi} computed this when $A$ is a finite-dimensional $C^*$-algebra of the form $\bC\oplus \bigoplus_{1\leq k\leq N} M_{p_k}$, assuming the Baum--Connes conjecture with coefficients (referred to as \emph{BCC} below) for $G$ \cite{BCH}. His motivation was to compute the $K$-theory of the reduced group $C^*$-algebra of the wreath product $H\wr G$ for an arbitrary finite group $H$. For  $A=\bC\oplus \bigoplus_{1\leq k\leq N} M_{p_k}$, the $K$-theory groups $K_\ast\left(A^{\otimes G}\rtimes_rG\right)$ are computed in \cite{XinLi} as 
\begin{equation}\label{eq-XinLi}
K_\ast\left(A^{\otimes G}\rtimes_rG\right) \cong  \bigoplus_{[F]\in G\backslash \FIN} \bigoplus_{[S] \in G_F\backslash (\{1, \ldots, N\}^F) } K_\ast(C^*_r(G_S)) \\
\end{equation}
where $\FIN$ is the set of all the finite subsets of $G$ equipped with the left-translation $G$-action and $G_F:=\Stab_G(F)$, resp.~$G_S:=\Stab_G(S)$, are the stabilizer groups at $F$, resp.~at $S$,  for the action of $G$ on $\FIN$. Note that when $G$ is torsion-free, \eqref{eq-XinLi} is the direct sum of $K_\ast(C^*_r(G))$ and infinitely many copies of $K_\ast(\bC)$. For wreath products $H\wr G$ with respect to a finite group $H$, we have $C^*_r(H\wr G)\cong C^*_r(H)^{\otimes G}\rtimes_r G$. In this case, the number $N$ in \eqref{eq-XinLi} corresponds to the number of non-trivial conjugacy classes of $H$. 
Our results were also motivated in part by the paper \cite{Ohhashi} by Issei Ohhashi, where he gives 
$K$-theory computations for crossed products $A^{\otimes Z}\rtimes \Z$ for Bernoulli shifts by the integer group $\Z$. 

Our first result computes $K_\ast\left(A^{\otimes Z}\rtimes_rG\right)$ for an arbitrary finite-dimen\-sion\-al $C^*$-algebra $A$ and for an arbitrary infinite countable $G$-set $Z$ with the action of $G$ on $A^{\otimes Z}$ induced by the $G$-action on $Z$. In what follows we let $\FIN(Z)$ denote the collection of finite subsets of $Z$ and we put
 $\FINx(Z):=\FIN(Z)\setminus\{\emptyset\}$. In the case $Z=G$ we shall simply write $\FIN$ and $\FINx$ as above.

\begin{introtheorem}[{Theorem \ref{thm-finitedim-shift}}] \label{introthm-finite-dimensional} 
Let $G$ be a discrete group satisfying BCC and let $Z$ be a countably  infinite  $G$-set. Let $A=\bigoplus_{0\leq j \leq N}{M_{k_j}}$ where $k_0, \ldots, k_N$ ($N\geq1$) are positive integers with $\gcd(k_0, \ldots, k_N)=n$. We have an explicit isomorphism 
\begin{align*}
K_\ast\left(A^{\otimes Z}  \rtimes_rG\right)  \cong  \bigoplus_{[F]\in G\backslash \FIN(Z)} \bigoplus_{[S] \in G_F\backslash (\{1, \ldots, N\}^F) } K_\ast(C^*_r(G_S))[1/n]. \\
\end{align*}
For $N=0$, we have 
\[
K_\ast\left(M_n^{\otimes Z}  \rtimes_rG\right) \cong K_\ast(C^*_r(G))[1/n].
\]
\end{introtheorem}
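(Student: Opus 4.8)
The plan is to prove the $N=0$ (uniformly hyperfinite) case first, since it is the building block on which the general statement rests, and then to reduce the general computation to it. For $N=0$ the algebra $M_n^{\otimes Z}$ is UHF with $K_0=\Z[1/n]$ and $K_1=0$, and the Bernoulli shift is the only structure. By the Going--Down principle, available because $G$ satisfies BCC, it suffices to show that the unital equivariant inclusion $\bC\hookrightarrow M_n^{\otimes Z}$ (with trivial action on $\bC$) induces an isomorphism on $K_\ast(-\rtimes_r F)[1/n]$ for every finite subgroup $F\le G$; the passage to $G$ is then automatic, and localization at $n$ is exact. For finite $F$ this is exactly where the trivialization theorem of the companion paper enters: it identifies the restricted Bernoulli action of $F$ with the trivial action up to the relevant equivalence, so that $M_n^{\otimes Z}\rtimes_r F$ becomes $KK$-equivalent to $M_n^{\otimes Z}\otimes C^*_r(F)$, and a K\"unneth argument yields $K_\ast(C^*_r(F))\otimes\Z[1/n]$ with $\bC\hookrightarrow M_n^{\otimes Z}$ inducing the localization map $\Z\to\Z[1/n]$. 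This gives $K_\ast(M_n^{\otimes Z}\rtimes_r G)\cong K_\ast(C^*_r(G))[1/n]$.

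For general $A$ I would exploit the equivariant inductive-limit decomposition $A^{\otimes Z}=\colim_F A^{\otimes F}$ along the $G$-poset $\FIN(Z)$, together with the continuous-field structure of $A^{\otimes Z}$ over the configuration space $\{0,\dots,N\}^Z$. Writing $A=\bigoplus_{0\le j\le N}M_{k_j}$ and expanding the tensor product produces, at each finite level, summands indexed by colourings of $F$; recording the support of a colouring (the finite set where it avoids the distinguished index $0$) and its values there gives exactly a pair $(F,S)$ with $S\in\{1,\dots,N\}^F$. The $G$-action permutes these pairs with $\Stab_G(F,S)=G_S$, so after Green imprimitivity the orbit sum is organised precisely as $\bigoplus_{[F]\in G\backslash\FIN(Z)}\bigoplus_{[S]\in G_F\backslash(\{1,\dots,N\}^F)}$. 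The content is then to show that the graded piece attached to $(F,S)$ is governed by the UHF building block on the complementary sites $Z\setminus F$ with residual $G_S$-action, so that, once the connecting maps are taken into account, its crossed-product $K$-theory is $K_\ast(C^*_r(G_S))[1/n]$ by the $N=0$ case applied to $G_S$ acting on the infinite $G_S$-set $Z\setminus F$.

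The main obstacle is controlling this equivariant colimit. On $K$-theory the connecting map from the $F$-level to the $(F\cup\{w\})$-level sends the class of a minimal projection in the summand for a colouring $c$ to the sum, over the extensions of $c$ to the new site $w$, of minimal-projection classes weighted by the matrix sizes $k_{c(w)}$. One must show that the resulting system collapses, with no surviving higher differentials, and that after inverting $n$ these weighted branching maps become isomorphisms onto a free $\Z[1/n]$-module indexed by supports; the appearance of precisely $\Z[1/n]$, rather than a localization at an individual $k_j$, is a B\'ezout phenomenon, since combining extensions at several sites realizes division by $\gcd(k_0,\dots,k_N)=n$. Carrying this through the crossed product equivariantly is the crux, and I expect it to be the hardest step: the Going--Down principle reduces the comparison to finite subgroups $F\le G$, where $K_\ast(A^{\otimes Z}\rtimes_r F)$ is computed directly from the colimit together with the trivialization theorem for the UHF pieces, and BCC then reassembles the finite-subgroup answers into $K_\ast(A^{\otimes Z}\rtimes_r G)$, with $\Z[1/n]$ appearing uniformly in every summand.
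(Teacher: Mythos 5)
Your first paragraph (the $N=0$ case) is essentially sound and is in fact the route of the companion paper \cite{KN}, which this paper simply cites as Theorem \ref{thm-localize}. Two details need care: the Going--Down principle (Theorem \ref{thm-goingdown}) upgrades morphisms that are isomorphisms \emph{integrally} on finite-subgroup crossed products, not after inverting $n$, so you must implement the localization $C^*$-algebraically (e.g.\ tensor the inclusion $\bC\hookrightarrow M_n^{\otimes Z}$ with a trivially-acting copy of $M_{n^\infty}$ so that it becomes an honest weak $K$-equivalence); and you then only obtain $K_*(M_n^{\otimes Z}\rtimes_r G)[1/n]\cong K_*(C^*_r(G))[1/n]$, so you still need the full strength of the trivialization/absorption theorem to know that $K_*(M_n^{\otimes Z}\rtimes_r G)$ is already a $\Z[1/n]$-module. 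Both points are handled in \cite{KN}, so this part matches the intended use of the companion paper.

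The general case contains a genuine gap. Your closing step --- compute $K_*(A^{\otimes Z}\rtimes_r H)$ for finite subgroups $H$ from the inductive limit and then let ``BCC reassemble'' these into $K_*(A^{\otimes Z}\rtimes_r G)$ --- is not a valid principle: the Baum--Connes conjecture identifies $K_*(A^{\otimes Z}\rtimes_r G)$ with the equivariant $K$-homology of $\underline{E}G$, which is \emph{not} determined by the groups $K_*(A^{\otimes Z}\rtimes_r H)$ alone; the paper itself warns that the natural map \eqref{eq-0dim-approx} from the orbit-category colimit is in general neither injective nor surjective. What Going--Down actually upgrades is a \emph{morphism} in $KK^G$ that is an isomorphism over finite subgroups, and your proposal never constructs the required equivariant comparison between $A^{\otimes Z}$ and a model algebra realizing the $(F,S)$-decomposition: your second paragraph is bookkeeping of $K$-theory classes and branching maps, not a $*$-homomorphism or $KK^G$-element. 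Supplying exactly this morphism is the paper's key new ingredient, and it is arithmetic: Theorem \ref{thm-coprime-KK} (via $\SL(N+1,\Z)$-matrices with non-negative entries) produces a \emph{unital} $*$-homomorphism $\bC^{N+1}\to B$, $B=\bigoplus_j M_{k_j/n}$, which is a $KK$-equivalence --- unitality is what permits infinite tensor powers, and Lemma \ref{lem-Izumi} is needed to make $\phi^{\otimes F}$ meaningful in $KK^{G}$ --- after which Corollary \ref{cor-weak-equivalence} and Xin Li's formula (Corollary \ref{cor-Cantor}) give the $\gcd=1$ case, and the factorization $A\cong B\otimes M_n$ together with Theorem \ref{thm-localize} inserts the $[1/n]$. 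Finally, your identification of the $(F,S)$-piece with $K_*(C^*_r(G_S))[1/n]$ ``by the $N=0$ case applied to $Z\setminus F$'' is inconsistent with your own B\'ezout remark: the static fiber over the complement of $F$ is $M_{k_0}^{\otimes (Z-F)}$, whose $N=0$ answer is $K_*(C^*_r(G_S))[1/k_0]$, not $[1/n]$ (for $A=M_2\oplus M_3$ this would wrongly produce summands $\Z[1/2]$ where the correct answer has summands $\Z$); getting $[1/n]$ forces you to use the branching maps across colours, so the pieces are not static and this step, as written, is circular.
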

Our strategy of the proof is as follows: First, we consider the case $n=1$ and construct a unit-preserving $KK$-equivalence $\bC^{N+1}\sim_{KK} A$ using an arithmetic argument about matrices in $\SL(n,\Z)$ with non-negative entries. By an argument borrowed from \cite{Izumi, Szabo}, this allows us to replace $A$ by $\bC^{N+1}$ and apply Xin Li's formula \eqref{eq-XinLi}. Second, we reduce the general case to the case $n=1$ with the help of a trivialization theorem for finite group actions on UHF algebras developed in the companion paper \cite{KN}.

One of our main techniques is the following theorem which is inspired by the \emph{regular basis} technique from \cite{CEL}.

\begin{introtheorem} [{Theorem \ref{thm-replace-by-JB-A0}, Corollary \ref{cor-UCT}}] \label{introthm-JB}
Let $G$ be a discrete group satisfying BCC. Let $A$ be a separable, unital $C^*$-algebra that satisfies the Universal Coefficient Theorem (UCT) and such that the unital inclusion $\iota\colon \bC \to A$ induces a split-injection $K_0(\bC) \to K_0(A)$.  Then $K_\ast\left(A^{\otimes G} \rtimes_rG\right)$ only depends on $G$ and the cokernel $\tilde K_\ast(A)$ of $\iota_\ast\colon K_\ast(\bC) \to K_\ast(A)$. 
For any countable $G$-set $Z$ we have
\begin{align*}
K_\ast\left(A^{\otimes Z} \rtimes_rG\right) & \cong \bigoplus_{[F]\in G\backslash \FIN(Z)}K_\ast\left(B^{\otimes F} \rtimes_rG_F\right),
\end{align*}
where $B$ is any $C^*$-algebra satisfying UCT with $K$-theory isomorphic to $\tilde K_\ast(A)$\footnote{Note that this determines $B$ uniquely up to $KK$-equivalence.} and where $G_F=\Stab_G(F)$. In particular, if $G$ is torsion-free and the action of $G$ on $Z$ is free, we have
\begin{align*}
K_\ast\left(A^{\otimes Z} \rtimes_rG\right) \cong K_\ast(C^*_r(G))\oplus \bigoplus_{[F]\in G\backslash \FINx(Z)}K_\ast\left(B^{\otimes F}\right).
\end{align*}

\end{introtheorem}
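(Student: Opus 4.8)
The plan is to first reduce to the case $A = \bC \oplus B$ and then run the \emph{regular basis} machinery of \cite{CEL}. The split-injectivity hypothesis gives a splitting $K_\ast(A) \cong \Z[1_A] \oplus \tilde K_\ast(A)$ in which the generator $[1_A]$ of the $\bC$-summand is precisely $\iota_\ast(1)$. Choosing any unital UCT algebra $B$ with $K_\ast(B) \cong \tilde K_\ast(A)$, one checks that $K_\ast(A) \cong K_\ast(\bC \oplus B)$ by an isomorphism carrying $[1_A]$ to $[1_{\bC \oplus B}]$; since $A$ and $\bC \oplus B$ both satisfy the UCT, this lifts to a \emph{unital} $KK$-equivalence $A \sim_{KK} \bC \oplus B$. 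I would then feed this into the tensor-power argument used for Theorem~\ref{thm-finitedim-shift} (following \cite{Izumi, Szabo}) to upgrade it to a $G$-equivariant $KK$-equivalence $A^{\otimes Z} \sim_{KK^G} (\bC \oplus B)^{\otimes Z}$ of Bernoulli shifts; descending to reduced crossed products then reduces the theorem to the case $A = \bC \oplus B$. This is also where split-injectivity is essential: without it one cannot peel off a copy of $\bC$ compatibly with the unit.

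Now take $A = \bC \oplus B$, write $\epsilon\colon A \to \bC$ for the augmentation onto the first factor and $A_0 = \ker \epsilon \cong B$. Decomposing elementary tensors according to their (finite) support, one gets a linear decomposition $A^{\otimes Z} = \overline{\bigoplus}_{F \in \FIN(Z)} A_0^{\otimes F}$ which is a grading by the semilattice $(\FIN(Z), \cup)$: the product of the homogeneous parts of degrees $F$ and $F'$ lands in degree $F \cup F'$. Consequently $I_n := \overline{\bigoplus_{|F| \geq n}} A_0^{\otimes F}$ is a decreasing chain of $G$-invariant ideals with $\bigcap_n I_n = 0$, and the subquotients are \emph{orthogonal} direct sums
\[
I_n / I_{n+1} \;\cong\; \bigoplus_{|F| = n} B^{\otimes F},
\]
on which $G$ acts by permuting the summands via its action on the set of $n$-element subsets of $Z$.

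Each such subquotient is a direct sum of induced algebras: the summands over a single $G$-orbit $G \cdot F$ assemble to $\mathrm{Ind}_{G_F}^G(B^{\otimes F})$, so Green's imprimitivity theorem gives a Morita equivalence with $B^{\otimes F} \rtimes_r G_F$ and hence
\[
K_\ast\big( (I_n/I_{n+1}) \rtimes_r G \big) \;\cong\; \bigoplus_{[F] \in G \backslash \FIN(Z),\, |F| = n} K_\ast\big( B^{\otimes F} \rtimes_r G_F \big).
\]
Summing over $n$ this is exactly the right-hand side of the asserted formula, so the content of the theorem is that the filtration $\{I_n\}$ splits in $K$-theory, i.e.\ all boundary maps in the associated six-term sequences vanish. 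This degeneration is the main obstacle, and it is precisely what the regular basis technique is designed to produce: the idea is to promote the literal finite-stage identifications $A^{\otimes F} = \bigoplus_{S \subseteq F} B^{\otimes S}$ into a $G$-equivariant $KK$-equivalence between $A^{\otimes Z}$ and the orthogonal direct sum $\bigoplus_{F \in \FIN(Z)} B^{\otimes F}$, the discrepancy between the two inductive systems being a triangular (degree-increasing) change of basis that is invertible in $KK^G$. I would verify this invertibility through the Going-Down principle, checking it after restriction to each finite subgroup $H \leq G$ where the comparison is tractable, and then use BCC for $G$ to transport the resulting isomorphism to $K_\ast(\,\cdot \rtimes_r G)$.

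Finally, the torsion-free free case is a direct specialization: if $G$ is torsion-free and acts freely on $Z$, then any nontrivial $g$ stabilizing a finite set $F$ permutes $F$ and hence fixes a point after passing to a power, forcing $g = e$; thus $G_F = \{e\}$ for $F \neq \emptyset$ and $B^{\otimes F} \rtimes_r G_F = B^{\otimes F}$, while the term $F = \emptyset$ contributes $B^{\otimes \emptyset} \rtimes_r G = C^*_r(G)$. This yields the stated splitting $K_\ast(C^*_r(G)) \oplus \bigoplus_{[F] \in G\backslash \FINx(Z)} K_\ast(B^{\otimes F})$.
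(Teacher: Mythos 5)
Your second half --- comparing the Bernoulli algebra with the orthogonal direct sum $\bigoplus_{F\in\FIN(Z)}B^{\otimes F}$, checking invertibility only after restriction to finite subgroups, invoking the Going-Down principle, and finishing with Green's imprimitivity theorem --- is essentially the paper's own proof of Theorem \ref{thm-replace-by-JB-A0} (with the caveat that restricting to finite subgroups can only ever certify a \emph{weak} $K$-equivalence, never invertibility in $KK^G$; that weaker statement is all that is needed and all the paper proves). The genuine gap is in your first step, the reduction to $A=\bC\oplus B$. You assert that the unit-matching isomorphism $K_\ast(A)\cong K_\ast(\bC\oplus B)$ ``lifts to a \emph{unital} $KK$-equivalence.'' If this means a unital $\ast$-homomorphism that is a $KK$-equivalence, it is false in general: for simple unital $A$ (e.g.\ an irrational rotation algebra $A_\theta$, one of the paper's target examples) there is no character on $A$, hence no unital $\ast$-homomorphism $A\to\bC\oplus B$ at all, and there is no reason for one to exist in the other direction either. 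The existence of such a map is exactly the special feature of the finite-dimensional situation (Theorem \ref{thm-coprime-KK}) that does not persist here.

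If instead you only mean a $KK$-equivalence $x$ matching the classes of the units, then the ``tensor-power argument'' does not apply: Lemma \ref{lem-Izumi} produces $x^{\otimes F}$ only for \emph{finite} $G$-sets $F$, and Corollary \ref{cor-weak-equivalence} requires a genuine unital $\ast$-homomorphism because the connecting maps of the Bernoulli colimit are given by tensoring with the unit. One can check (using multiplicativity of the map in Lemma \ref{lem-Izumi} together with the matching of unit classes) that the finite-stage isomorphisms $K_\ast\left(A^{\otimes S}\rtimes_r H\right)\cong K_\ast\left((\bC\oplus B)^{\otimes S}\rtimes_r H\right)$ are compatible with the connecting maps, so one does get isomorphisms $K_\ast\left(A^{\otimes Z}\rtimes_r H\right)\cong K_\ast\left((\bC\oplus B)^{\otimes Z}\rtimes_r H\right)$ for every finite $H\leq G$; but these are \emph{not} induced by any single class in $KK^G\left(A^{\otimes Z},(\bC\oplus B)^{\otimes Z}\right)$, since $KK$ does not commute with filtered colimits in the first variable. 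The Going-Down principle (Theorem \ref{thm-goingdown}) transports a \emph{morphism}, not a family of abstract isomorphisms over finite subgroups, so nothing follows about the crossed product by $G$ itself. This is precisely the difficulty the paper's argument is engineered to avoid: it never forms $(\bC\oplus B)^{\otimes Z}$ at all, but compares $A^{\otimes Z}$ directly with the $c_0$-direct sum $\cJ^Z_B=\bigoplus_{F\in\FIN(Z)}B^{\otimes F}$, because for a direct sum over the discrete $G$-set $\FIN(Z)$ Lemma \ref{lem-KKproduct} allows a genuine class in $KK^G\left(\cJ^Z_B,A^{\otimes Z}\right)$ to be assembled orbit-by-orbit from the finite-stage compositions $B^{\otimes F}\to(\bC\oplus B)^{\otimes F}\xrightarrow{(\iota\oplus\phi)^{\otimes F}}A^{\otimes F}\hookrightarrow A^{\otimes Z}$, which use Lemma \ref{lem-Izumi} only for finite $F$. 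Your proof becomes correct if you delete the reduction step entirely and run your ``regular basis'' comparison directly against $A^{\otimes Z}$, replacing the literal identification $A^{\otimes F}\cong\bigoplus_{S\subseteq F}B^{\otimes S}$ by the $KK^{G_F}$-equivalence $(\iota\oplus\phi)^{\otimes F}$ supplied by the UCT.
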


Note that the second formula can be computed more explicitly using the K\"unneth theorem. In the important case that  $Z=G$ with the left translation action (or, more generally, if $G$ acts properly on $Z$) the stabilizers $G_F$ 
for $F\in \FINx(Z)$ are all finite and the 
formula becomes more explicit once we can compute $K_\ast\left(B^{\otimes H}\rtimes_rH\right)$ for finite groups $H$ and for relevant building blocks for $B$ like $\bC$, $C_0(\R)$, the Cuntz-algebras $\cO_n$, and $C_0(\R)\otimes \cO_n$. For $H=\Z/2$, these computations have been done by Izumi \cite{Izumi}.
For general $H$, computing these $K$-groups is a non-trivial, indeed challenging task. For $B=C_0(\R)$ however, they are nothing but the equivariant topological $K$-theory $K^\ast_H\left(\R^H\right)$ for the $H$-Euclidean space $\R^{H}$, i.e.,  $\R^{|H|}$ with $H$-action induced by translation of coordinates. The groups $K^\ast_H\left(\R^H\right)$ are quite well-studied (\cite{Karoubi} \cite{EP}). Using these results, we give a more explicit formula for $A=C(S^1)$ (Example \ref{ex-S1}) and for the rotation algebras (or noncommutative tori) $A=A_\theta$ (Example \ref{ex-rotation}). More explicitly, we have

\begin{introtheorem}[{Example \ref{ex-S1}}]\label{introthm-S1} Let $G$ be a  discrete group satisfying BCC. We have
\begin{align*}
& K_\ast\left(C(S^1)^{\otimes G}\rtimes_rG\right) \\
 &\cong  K_\ast(C^*_r(G)) \oplus \left( \bigoplus_{\underset{|G_F\backslash F| \,\, \mathrm{even}}{[F]\in G\backslash \FIN^\times,}}K_\ast(C^*_r(G_F)) \right)  \oplus  \left(\bigoplus_{\underset{|G_F\backslash F| \,\, \mathrm{odd}}{[F]\in G\backslash \FIN^\times,}}K^\ast_{G_F}\left(\R^{G_F}\right) \right).
\end{align*}
\end{introtheorem}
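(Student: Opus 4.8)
The plan is to feed $A = C(S^1)$ into the structural formula of Theorem~\ref{introthm-JB} and then evaluate each summand by hand. First I would check the hypotheses: $C(S^1)$ is separable, unital and commutative, hence nuclear and UCT, and the unit inclusion $\iota\colon \bC \to C(S^1)$ induces on $K_0$ the map $\Z \to \Z$, $[1]\mapsto[1]$, which is an isomorphism and in particular a split injection. Thus the cokernel is $\tilde K_0(C(S^1))=0$ and $\tilde K_1(C(S^1))=K_1(C(S^1))=\Z$, and a UCT algebra realizing this $K$-theory is $B=C_0(\R)$, for which $K_0(C_0(\R))=0$ and $K_1(C_0(\R))=\Z$. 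Specializing Theorem~\ref{introthm-JB} to $Z=G$ with the left-translation action then gives
\[
K_\ast\left(C(S^1)^{\otimes G}\rtimes_r G\right) \cong \bigoplus_{[F]\in G\backslash \FIN} K_\ast\left(C_0(\R)^{\otimes F}\rtimes_r G_F\right).
\]

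Next I would identify each summand. The class $F=\emptyset$ gives $G_\emptyset=G$ and $C_0(\R)^{\otimes\emptyset}=\bC$, contributing $K_\ast(\bC\rtimes_r G)=K_\ast(C^*_r(G))$, the leading term of the claimed formula. For $F\in\FINx$ the left-translation action of $G$ on itself is free, so $G_F=\Stab_G(F)$ acts freely on the finite set $F$ and is therefore finite; in particular reduced and full crossed products agree. Under the canonical isomorphism $C_0(\R)^{\otimes F}\cong C_0(\R^F)$, with $\R^F=\mathrm{Map}(F,\R)$, the Bernoulli $G_F$-action becomes the permutation action on the coordinates, and the Green--Julg theorem identifies
\[
K_\ast\left(C_0(\R)^{\otimes F}\rtimes_r G_F\right)\cong K_\ast\left(C_0(\R^F)\rtimes G_F\right)\cong K^\ast_{G_F}\left(\R^F\right).
\]

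The heart of the argument is to rewrite $K^\ast_{G_F}(\R^F)$ in terms of the $G_F$-Euclidean space $\R^{G_F}$. Since $F$ is a free $G_F$-set, choosing orbit representatives gives a $G_F$-equivariant bijection $F\cong G_F\times(G_F\backslash F)$, whence $\R^F\cong(\R^{G_F})^{\oplus m}$ as real $G_F$-representations, where $m=|G_F\backslash F|$. Each pair of copies of $\R^{G_F}$ carries the $G_F$-invariant complex structure $J(v,w)=(-w,v)$, turning $(\R^{G_F})^{\oplus 2}$ into a complex $G_F$-representation. If $m$ is even, all of $\R^F$ acquires such a complex structure, and equivariant complex Bott periodicity (the Thom isomorphism for the complex $G_F$-representation $\R^F$, whose degree shift is twice a complex dimension and hence invisible to the two-periodic $K$-theory) gives $K^\ast_{G_F}(\R^F)\cong K^\ast_{G_F}(\mathrm{pt})\cong K_\ast(C^*_r(G_F))$, the second type of summand. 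If $m$ is odd, I would split off one copy and write $\R^F\cong\R^{G_F}\oplus(\R^{G_F})^{\oplus(m-1)}$ with the second summand complex; the Thom isomorphism for that complex part then yields $K^\ast_{G_F}(\R^F)\cong K^\ast_{G_F}(\R^{G_F})$, the third type of summand. Collecting the $F=\emptyset$ term with the even and odd contributions over $[F]\in G\backslash\FINx$ reproduces the stated formula. The step requiring genuine care is the equivariant Bott/Thom isomorphism: one must verify that the complex structure is truly $G_F$-invariant and that the odd case does not collapse into the even one. Both points are secured by the explicit $J$ above together with the fact that a single copy of the regular representation $\R^{G_F}$ admits no invariant complex structure in general, so that the parity of $m$ is an honest invariant; I expect no obstacle beyond tracking conventions between the analytic $K_\ast$ and the topological $K^\ast_{G_F}$.
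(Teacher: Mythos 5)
Your proposal is correct and follows essentially the same route as the paper's Example \ref{ex-S1}: feed $B=C_0(\R)$ into Theorem \ref{introthm-JB} (the paper constructs the $KK$-equivalence $\iota\oplus\phi$ explicitly from the inclusion $C_0(\R)\to C(S^1)$, which is what Corollary \ref{cor-UCT} abstracts), then use freeness of the $G_F$-action on $F$ to write $\R^F$ as $|G_F\backslash F|$ copies of the regular representation $\R^{G_F}$ and invoke equivariant Bott periodicity to collapse pairs of copies, yielding the same even/odd dichotomy. Your phrasing via Green--Julg and the equivariant Thom isomorphism for the invariant complex structure is just the topological translation of the paper's $KK^{G_F}$-equivalence argument via Kasparov's Bott periodicity theorem.
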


As another application of Theorem \ref{introthm-JB}, we obtain a formula for the $K$-theory of reduced $C^*$-algebras of many wreath products $H\wr G$:

\begin{introtheorem}(Theorem \ref{thm-wreath})\label{introthm-wreath} Let $G$ be a discrete group satisfying BCC and let $H$ be a discrete group such that $C^*_r(H)$ satisfies the UCT and such that the inclusion $\bC\to C^*_r(H)$ induces a split injection on $K_0$. Then we have
\[
K_\ast(C^*_r(H\wr G)) \cong K_\ast(C^*_r(G)) \oplus \bigoplus_{[F] \in G\backslash \FINx} K_\ast\left(B^{\otimes F} \rtimes_rG_F\right),
\] where $B$ is any $C^*$-algebra satisfying the UCT with $K$-theory isomorphic to $\tilde K_*(C^*_r(H))$. In particular, if $G$ is torsion-free, we have
\begin{align*}
K_\ast(C^*_r(H\wr G)) \cong K_\ast(C^*_r(G))\oplus \bigoplus_{[F]\in G\backslash \FINx}K_\ast\left(B^{\otimes F}\right).
\end{align*}
\end{introtheorem}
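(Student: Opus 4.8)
The plan is to identify $C^*_r(H\wr G)$ with a Bernoulli-shift crossed product and then to apply Theorem~\ref{introthm-JB} directly. Write $H\wr G=\bigl(\bigoplus_{g\in G}H\bigr)\rtimes G$, where $G$ acts on the restricted direct product $\bigoplus_{g\in G}H$ by shifting coordinates, $s\cdot (h_g)_g=(h_{s^{-1}g})_g$. The standard decomposition of the reduced group $C^*$-algebra of a semidirect product yields $C^*_r(H\wr G)\cong C^*_r\bigl(\bigoplus_{g\in G}H\bigr)\rtimes_r G$. Since the left regular representation of a restricted direct product of groups is the tensor product of the individual left regular representations, we have $C^*_r\bigl(\bigoplus_{g\in G}H\bigr)\cong C^*_r(H)^{\otimes G}$, realised as the colimit over finite subsets $F\subseteq G$ of the finite tensor products $\bigotimes_{g\in F}C^*_r(H)$. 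Under this identification the coordinate shift becomes exactly the Bernoulli shift on $C^*_r(H)^{\otimes G}$, so that $C^*_r(H\wr G)\cong C^*_r(H)^{\otimes G}\rtimes_r G$ $G$-equivariantly.

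It then remains to feed this into Theorem~\ref{introthm-JB} with $A=C^*_r(H)$ and $Z=G$ carrying the left-translation action. The hypotheses are satisfied: $A$ is unital and, since $H$ is countable, separable; it satisfies the UCT by assumption; and the unital inclusion $\bC\to C^*_r(H)$ splits on $K_0$ by assumption, so $\tilde K_\ast(C^*_r(H))=\coker(\iota_\ast)$ is defined and a $C^*$-algebra $B$ with $K_\ast(B)\cong \tilde K_\ast(C^*_r(H))$ exists and is unique up to $KK$-equivalence. Theorem~\ref{introthm-JB} then gives $K_\ast(C^*_r(H)^{\otimes G}\rtimes_r G)\cong\bigoplus_{[F]\in G\backslash\FIN}K_\ast(B^{\otimes F}\rtimes_r G_F)$. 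Isolating the term $F=\emptyset$, for which $B^{\otimes\emptyset}=\bC$ and $G_\emptyset=G$, contributes $K_\ast(\bC\rtimes_r G)=K_\ast(C^*_r(G))$, while the remaining terms run over $\FINx=\FIN\setminus\{\emptyset\}$; this is exactly the asserted formula.

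For the torsion-free case I would use that the left-translation action of $G$ on itself is free, so for any nonempty finite $F\subseteq G$ the stabiliser $G_F=\Stab_G(F)$ is contained in $Ff^{-1}$ for any chosen $f\in F$, hence is a finite subgroup of $G$ and therefore trivial when $G$ is torsion-free. Consequently $B^{\otimes F}\rtimes_r G_F=B^{\otimes F}$, which yields the second formula. The genuine mathematical content here is carried entirely by Theorem~\ref{introthm-JB}; the only point requiring care is the $G$-equivariance of the identification $C^*_r(H\wr G)\cong C^*_r(H)^{\otimes G}\rtimes_r G$ — in particular matching the group-theoretic coordinate shift with the Bernoulli shift on the infinite tensor product — which is standard but should be checked on the dense $*$-subalgebras before completing.
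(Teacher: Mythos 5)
Your proposal is correct and follows essentially the same route as the paper: identify $C^*_r(H\wr G)$ with the Bernoulli-shift crossed product $C^*_r(H)^{\otimes G}\rtimes_r G$ and then apply Theorem~\ref{introthm-JB} (in the paper, Corollary~\ref{cor-UCT}) with $A=C^*_r(H)$ and $Z=G$, splitting off the $F=\emptyset$ summand to get $K_\ast(C^*_r(G))$. Your added verifications — the equivariance of the identification on dense subalgebras, and that stabilizers $G_F\subseteq Ff^{-1}$ of nonempty finite sets are finite hence trivial for torsion-free $G$ — are exactly the details the paper leaves implicit.
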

We note that both the UCT assumption on $C_r^*(H)$ and the split-injectivity of the map $K_0(\bC)\to K_0(C_r^*(H))$ hold for every a-$T$-menable (in particular every amenable) group $H$ (see Remark \ref{rem-split-injective} below).

In Section \ref{sec-examples}, we also obtain formulas for several $C^*$-algebras $A$ that are not covered in Theorem \ref{introthm-JB}, in particular for Cuntz algebras.

\begin{introtheorem}[Corollary \ref{cor-On-localized-is-zero}, Proposition \ref{prop-On-fg}]\label{introthm-On}
	Let $G$ be a discrete group satisfying BCC and let $n\geq 2$. Then we have 
		\[K_*\left(\cO_{n+1}^{\otimes G}\rtimes_r G\right)[1/n]=0,\]
	where $\cO_{n+1}$ is the Cuntz algebra on $n+1$ generators.
	If $G$ is finite and $n=p$ prime, then $K_*\left(\cO_{p+1}^{\otimes G}\rtimes_r G\right)$ is a finitely generated $p$-group, that is a group of the form $\bigoplus_{1\leq j\leq N}\Z/p^{k_j}\Z$. 
\end{introtheorem}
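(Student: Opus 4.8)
The plan is to reduce everything to the Cuntz algebra $\cO_2$, exploiting that $\cO_2$ is $K$-theoretically trivial. For the first assertion I would realize the localization $K_*(-)[1/n]$ concretely as tensoring by the UHF-algebra $M_{n^\infty}$ of type $n^\infty$: since $K_0(M_{n^\infty})\cong\Z[1/n]$ is flat (indeed torsion-free) over $\Z$, the Künneth theorem gives a natural isomorphism $K_*(D)[1/n]\cong K_*(D\otimes M_{n^\infty})$ for every separable $C^*$-algebra $D$. Applying this to $D=\cO_{n+1}^{\otimes G}\rtimes_r G$ and commuting the trivially acted factor $M_{n^\infty}$ past the crossed product, I obtain
\[
K_*\!\left(\cO_{n+1}^{\otimes G}\rtimes_r G\right)[1/n]\;\cong\;K_*\!\left(\big(\cO_{n+1}^{\otimes G}\otimes M_{n^\infty}\big)\rtimes_r G\right).
\]

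The decisive algebraic input is the isomorphism $\cO_{n+1}\otimes M_{n^\infty}\cong\cO_2$. Indeed $\cO_{n+1}\otimes M_{n^\infty}$ is a unital, separable, nuclear, purely infinite simple $C^*$-algebra satisfying the UCT, and a Künneth computation shows its $K$-theory vanishes: $K_*(\cO_{n+1})\cong(\Z/n,0)$ is annihilated by $n$, hence killed by $-\otimes\Z[1/n]$. By the Kirchberg--Phillips classification the only such algebra is $\cO_2$. The same computation shows $K_*(\cO_{n+1}^{\otimes F})$ is annihilated by $n$ for every finite $F$, so $K_*(\cO_{n+1}^{\otimes G}\otimes M_{n^\infty})=0$ and $\cO_{n+1}^{\otimes G}\otimes M_{n^\infty}\cong\cO_2$ as well. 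In particular the Bernoulli fibre has become the $KK$-trivial algebra $\cO_2$, whose unit is zero in $KK(\bC,\cO_2)=K_0(\cO_2)=0$.

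It remains to show that the resulting crossed product has trivial $K$-theory. Here I would feed the now $KK$-trivial fibre into the computation underlying Theorem~\ref{thm-replace-by-JB-A0}: because $K$-theory is continuous and $-\otimes\Z[1/n]$ is exact, localization commutes with the equivariant colimit presentation of $A^{\otimes G}\rtimes_r G$ used there, so one may replace the fibre $\cO_{n+1}$ by its localization $\cO_{n+1}\otimes M_{n^\infty}\cong\cO_2$ throughout, identifying the left-hand side above with $K_*(\cO_2^{\otimes G}\rtimes_r G)$. Since both the reduced $K$-theory and the unit class of $\cO_2$ vanish, every building block of the computation is zero --- crucially including the $F=\varnothing$ term $K_*(C^*_r(G))$, which is no longer a direct summand precisely because the connecting maps issuing from it are induced by the now-zero unit $\bC\to\cO_2$. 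The cleanest cross-check, independent of the machinery, is that the Bernoulli shift on $\cO_2^{\otimes G}\cong\cO_2$ is a strongly self-absorbing action in the sense of \cite{Szabo} and therefore absorbs the trivial action on $\cO_2$; hence $\cO_2^{\otimes G}\rtimes_r G$ is $\cO_2$-absorbing and its $K$-theory vanishes by Künneth. I expect this bridge --- rigorously passing the localization through the Bernoulli crossed-product computation, equivalently establishing $\cO_2$-absorption of the crossed product --- to be the main obstacle, precisely because the failure of split-injectivity of $\bC\to\cO_{n+1}$ forbids quoting Theorem~\ref{thm-replace-by-JB-A0} verbatim.

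For the second assertion, let $G$ be finite and $n=p$ prime. By the first part $K_*(\cO_{p+1}^{\otimes G}\rtimes_r G)$ is $p$-power torsion, so it suffices to establish finite generation, since a finitely generated $p$-power-torsion abelian group is automatically of the form $\bigoplus_{1\le j\le N}\Z/p^{k_j}\Z$. Now $\cO_{p+1}^{\otimes G}$ is a unital UCT algebra whose $K$-groups are finite $\F_p$-vector spaces (iterated Künneth from $K_*(\cO_{p+1})\cong(\Z/p,0)$), and for the finite group $G$ one has $K_*(\cO_{p+1}^{\otimes G}\rtimes_r G)\cong K^G_*(\cO_{p+1}^{\otimes G})$ (Green--Julg), a module over the representation ring $R(G)$. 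Standard finiteness properties of equivariant $K$-theory for finite groups --- finite generation of $K^G_*(B)$ over $R(G)$ when $K_*(B)$ is finitely generated, together with the fact that $R(G)$ is itself finitely generated as an abelian group --- then show this $K$-theory is finitely generated over $\Z$, hence finite. Combining with the first part yields the stated form.
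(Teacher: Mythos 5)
Your high-level strategy (invert $n$ by tensoring with $M_{n^\infty}$, then exploit $\cO_{n+1}\otimes M_{n^\infty}\cong\cO_2\sim_{KK}0$) is indeed the paper's strategy, but the step you yourself flag as "the main obstacle" is a genuine gap, and neither of your proposed fillers closes it. Your Künneth step produces $K_*\bigl(\bigl(\cO_{n+1}^{\otimes G}\otimes M_{n^\infty}\bigr)\rtimes_r G\bigr)$ where $G$ acts by the Bernoulli shift on the first factor and \emph{trivially} on the UHF factor; what you need is the same statement for $\bigl(\cO_{n+1}\otimes M_{n^\infty}\bigr)^{\otimes G}\cong\cO_2^{\otimes G}$, where the UHF factor $M_{n^\infty}^{\otimes G}$ carries the \emph{Bernoulli} action. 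These are genuinely different $G$-algebras, and identifying their crossed-product $K$-theories is not a formal consequence of continuity of $K$-theory and exactness of localization: $A^{\otimes G}\rtimes_r G$ is not a filtered colimit of crossed products over finite subsets of $G$ (the group does not preserve them), and even when $G$ is finite the two actions on the UHF tensor factor differ. The needed identification, $K_*(A\rtimes_r G)[1/\mathfrak n]\cong K_*\bigl((A\otimes M_{\mathfrak n}^{\otimes Z})\rtimes_r G\bigr)$ with the shift action on $M_{\mathfrak n}^{\otimes Z}$, is precisely Theorem \ref{thm-localize}, i.e.\ the trivialization theorem \cite[Corollary 2.11]{KN} from the companion paper; the paper's proof of Corollary \ref{cor-On-localized-is-zero} is exactly "Theorem \ref{thm-localize} plus Corollary \ref{cor-weak-equivalence}". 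Your $\cO_2$-absorption cross-check only recomputes $K_*\bigl(\cO_2^{\otimes G}\rtimes_r G\bigr)=0$, which is the easy part (Example \ref{ex-O2}, via Lemma \ref{lem-Izumi} and the going-down principle); it says nothing about why that group coincides with $K_*\bigl(\cO_{n+1}^{\otimes G}\rtimes_r G\bigr)[1/n]$.

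For the second assertion, the $p$-power-torsion part is fine granted part one, but the finite generation is where the real content lies, and the "standard finiteness property" you invoke --- that $K^G_*(B)$ is finitely generated over $R(G)$ whenever $K_*(B)$ is finitely generated, for $G$ finite --- is not a theorem in the $C^*$-setting. Atiyah--Segal-type finiteness concerns finite $G$-CW complexes and its proof uses the cell structure; no noncommutative analogue is available (if it were, Proposition \ref{prop-On-fg} would be a one-liner, whereas the paper treats even the computability of $K_*\bigl(\cO_{n+1}^{\otimes H}\rtimes_r H\bigr)$ as an open question). What the paper does instead is manufacture an equivariant substitute for a cell structure that is specific to $\cO_{n+1}$: it uses the Toeplitz algebra $\mathcal T_{n+1}$, which is $KK$-equivalent to $\bC$ by \cite{Pimsner}, filters $\mathcal T_{n+1}^{\otimes Z}$ by the $H$-invariant ideals $I_m$ generated by $\Compacts^{\otimes F}\otimes\mathcal T_{n+1}^{\otimes Z-F}$ with $|F|=m$, identifies the subquotients $I_m/I_{m+1}$ with direct sums of algebras Morita equivalent to $\cO_{n+1}^{\otimes Z-F}\rtimes H_F$, and runs an induction on $|Z|$ (over all finite groups simultaneously) through the six-term exact sequences. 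Some argument of this kind is indispensable; as written, your finite generation claim rests on a nonexistent general theorem.
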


In Section \ref{sec-AF}, we obtain a $K$-theory formula for Bernoulli shifts on unital AF-algebras in terms of colimits over the orbit category $\OrFING$. The result also applies to more general examples, in particular to many unital ASH-algebras (see Remark \ref{rem-AF-examples}).

\begin{introtheorem}[Theorem \ref{thm-AF-general}]\label{introthm-AF}
	Let $A$ be a unital AF-algebra, let $G$ be an infinite discrete group satisfying BCC, let $Z$ be a countable proper $G$-set, and let $S\subseteq \Z$ be the set of all positive integers $n$ such that $[1_A]\in K_0(A)$ is divisible by $n$. 
	Then, the natural inclusions

\begin{align*}
C^*_r(H) \to C^*_r(G),  \,\,\,& C^*_r(H)\to A^{\otimes Z}\rtimes_rH,\nonumber \\
C^*_r(G) \to A^{\otimes Z}\rtimes_rG, \,\,\,&  A^{\otimes Z}\rtimes_rH \to A^{\otimes Z}\rtimes_rG,
\end{align*}
induce the following pushout diagram 
\[
\xymatrix{
  \underset{G/H \in \OrFING}{\colim} K_\ast(C^*_r(H))[S^{-1}]    \ar[r] \ar[d] &  \underset{G/H \in \OrFING}{\colim} K_\ast\left(A^{\otimes Z} \rtimes_rH\right) \ar[d] \\
 K_\ast(C^*_r(G))[S^{-1}]   \ar[r]  & K_\ast\left(A^{\otimes Z} \rtimes_rG\right).
}
\] 
In particular, if $G$ is torsion-free, this pushout diagram reads
\[
\tilde K_\ast(C^*_r(G))[S^{-1}]  \oplus   K_\ast\left(A^{\otimes Z})_G \cong   K_\ast(A^{\otimes Z} \rtimes_rG\right),
\]
where $\tilde K_\ast(C^*_r(G))$ denotes the cokernel of the map $K_\ast(\bC)\to K_\ast(C_r^*(G))$ induced from the unital 
inclusion $\bC\hookrightarrow C_r^*(G)$.
\end{introtheorem}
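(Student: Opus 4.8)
\emph{Proof proposal.} The plan is to work $S$-locally throughout and to feed everything through the Baum--Connes isomorphism, organized over the orbit category $\OrFING$. The first point I would establish is that the two right-hand corners are \emph{automatically} $S$-local, which is exactly why only the two $C_r^\ast$-corners carry an explicit $[S^{-1}]$. Writing $A=\varinjlim_n A_n$ with $A_n$ finite dimensional and $1_A\in A_0$, we get $A^{\otimes Z}=\varinjlim_n A_n^{\otimes Z}$ as $G$-algebras, and the connecting maps act on $K_0$ by tensoring with the class $[1_A]$. Since $Z$ is infinite and $[1_A]$ is divisible by every $n\in S$, every class in $K_\ast(A^{\otimes Z})$ becomes $S$-divisible after finitely many stages, so $K_\ast(A^{\otimes Z})$ is $S$-local; I would then check that this $S$-locality is inherited by $K_\ast(A^{\otimes Z}\rtimes_r H)$ for every $H\le G$, using BCC to reduce to the finite subgroups.

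Next I would reduce to finite-dimensional coefficients. Reduced crossed products and $K$-theory are continuous, and filtered colimits commute with localization, with $\colim_{\OrFING}$, and with the formation of pushouts; writing $S_n$ for the integers dividing $[1_{A_n}]$ in $K_0(A_n)$ we have $S=\bigcup_n S_n$ and $\Z[S^{-1}]=\varinjlim_n\Z[S_n^{-1}]$. Hence all four corners are the $n$-colimit of the corresponding finite-dimensional corners, and it suffices to prove the pushout square for each $A_n$ with its own $S_n$. This is the regime in which Theorem~\ref{introthm-finite-dimensional} and the technique of Theorem~\ref{introthm-JB} are available.

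The conceptual engine is the following. After inverting $S$ the unital inclusion induces a split injection $K_\ast(\bC)[S^{-1}]\hookrightarrow K_\ast(A)[S^{-1}]$ --- a B\'ezout argument from the divisibility of $[1_A]$ supplies the retraction --- which is precisely the hypothesis of Theorem~\ref{introthm-JB}. I would then split the coefficient algebra into its scalar part $\bC$ and its non-scalar (reduced) part inside $KK^G$. The scalar part assembles to $C_r^\ast(G)$ and produces the left column, namely the edge map $\colim_{\OrFING}K_\ast(C_r^\ast(H))[S^{-1}]\to K_\ast(C_r^\ast(G))[S^{-1}]$. The reduced part is supported on non-empty finite subsets $F\subseteq Z$, and since $Z$ is \emph{proper} each stabilizer $G_F$ is finite; thus it is induced from finite subgroups, and by the Going--Down principle its assembly is computed exactly by the naive orbit-category colimit, with no higher contributions. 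Gluing the two pieces along the common scalar term yields a homotopy-cocartesian square of $K$-theory spectra, and the $S$-local split injectivity of the horizontal scalar maps kills the Mayer--Vietoris connecting maps, so the square descends to an honest pushout on $K$-groups. The torsion-free case then follows at once: $\OrFING$ contains only the trivial subgroup, the colimits become $G$-coinvariants, $\colim_{\OrFING}K_\ast(C_r^\ast(H))[S^{-1}]$ collapses to $K_0(\bC)[S^{-1}]$, and the splitting turns the pushout into the direct sum $\tilde K_\ast(C_r^\ast(G))[S^{-1}]\oplus K_\ast(A^{\otimes Z})_G$.

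I expect the main obstacle to be the identification of the reduced part's assembly with the naive orbit-category colimit, i.e.\ the vanishing of the higher terms in the Going--Down / Davis--L\"uck description for the non-scalar coefficient. Making this precise means packaging the decomposition of $A^{\otimes Z}$ relative to its scalars as a sequence in $KK^G$ whose reduced term is genuinely proper, and then tracking the localization carefully enough that the cocartesian square of spectra collapses to a pushout of homotopy groups rather than merely to a long exact Mayer--Vietoris sequence; the automatic $S$-locality of the coefficient side is the device that makes this collapse work.
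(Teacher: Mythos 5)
Your proposal reproduces the broad architecture of the paper's proof (reduce to finite-dimensional blocks by continuity, split the coefficient into scalars plus a ``reduced'' part as in Theorem \ref{introthm-JB}, use properness of $Z$ to get finite stabilizers, and compute via an orbit-category colimit), but it has a fatal gap at its very first step. You claim that $K_\ast(A^{\otimes Z}\rtimes_r H)$ is \emph{automatically} $S$-local because the connecting maps in the colimit have $S$-divisible image, and that this can be ``checked'' for all finite subgroups $H$. The divisibility argument is correct only for the trivial group: divisibility of $[1_A]$ in $K_0(A)$ does \emph{not} imply divisibility of the relevant unit class in $K_0^H(A^{\otimes F})$ when $H$ acts on $F$ without fixed points, and that is what the equivariant connecting maps multiply by. Concretely, take $A=M_2$, $H=\Z/2$, and $F$ a single free orbit: then $A^{\otimes F}=M_2\otimes M_2$ with the flip action, which is inner (implemented by the swap unitary $u$), so $A^{\otimes F}\rtimes \Z/2\cong M_4\oplus M_4$; the equivariant connecting map $K_0(C^*(\Z/2))\to K_0(A^{\otimes F}\rtimes\Z/2)\cong\Z\oplus\Z$ sends the class of the projection $(\delta_e+\delta_g)/2$ to $(3,1)$ (ranks of $(1\pm u)/2$), which is not divisible by $2$. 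So the image of the equivariant connecting maps is not $S$-divisible, and your route to $S$-locality collapses. The $S$-locality you need is precisely Theorem \ref{thm-localize} (= \cite[Corollary 2.11]{KN}), whose proof requires the trivialization theorem for finite group actions on UHF algebras from the companion paper; the paper's own proof invokes it exactly at this point (Case \ref{case-2}). This is a deep external input, not a routine check. (A minor related slip: you attribute the ``tensoring with $[1_A]$'' description to the colimit $A^{\otimes Z}=\varinjlim_n A_n^{\otimes Z}$, but it pertains to the colimit over finite subsets $F\subseteq Z$.)

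Two further points. First, Theorem \ref{introthm-JB} requires an \emph{integral} split injection, i.e.\ an honest class $\phi\in KK(B,A)$ with $\iota\oplus\phi$ a $KK$-equivalence; your B\'ezout argument only produces a splitting after inverting $S$, which is not the hypothesis of that theorem. The paper bridges this gap either by Theorem \ref{thm-coprime-KK} (writing each finite-dimensional block as $M_n\otimes D$ with the matrix sizes of $D$ coprime, via the $\SL(N+1,\Z)$ argument, so the integral hypothesis holds for $D$), or, in Section \ref{sec-modulo-k}, by tensoring with the UHF algebra $M_{\mathfrak n}$ and applying Theorem \ref{thm-replace-by-JB-A0} with $A_0=M_{\mathfrak n}$ --- which is the reason that theorem allows a general unital $A_0$ --- and in either case Theorem \ref{thm-localize} is needed again to return from $M_n\otimes D$ (or $A\otimes M_{\mathfrak n}$) to $A$. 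Second, you have mislocated the main difficulty: the identification of the assembly of the induced part with the naive orbit-category colimit, which you flag as the principal obstacle and propose to attack with spectrum-level Davis--L\"uck machinery and vanishing of higher terms, is in fact elementary and requires neither spectra nor BCC. It is the paper's Lemma \ref{lem-colimit-isom} and Corollary \ref{cor-colimit-isom}, proved directly from Green's imprimitivity theorem (\cite[Proposition 2.6.8]{CELY}) and a double-coset decomposition. The genuinely hard ingredient is the $S$-locality discussed above.
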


The paper is structured as follows: In Section \ref{sec-general}, we develop our main machinery, including Theorem \ref{introthm-JB}. We apply this machinery in Section \ref{sec-fd} to prove Theorem \ref{introthm-finite-dimensional}.  In Section \ref{sec-examples}, we compute the $K$-theory of many more examples, including Theorems \ref{introthm-S1}, \ref{introthm-wreath}, and \ref{introthm-On}. 
Bernoulli shifts on unital AF-algebras are investigated in Section \ref{sec-AF} where we prove Theorem \ref{introthm-AF}. Based on similar ideas, we obtain in Section \ref{sec-modulo-k} some very general $K$-theory formulas 
up to inverting an integer $k$ or up to tensoring with the rationals $\Q$ (see Theorem \ref{thm-UCT-k} and Theorem \ref{thm-modulo-k-general}).
The rational $K$-theory computations apply to all unital stably finite $C^*$-algebras satisfying the UCT.

\subsection*{Acknowledgements}
The authors would like to thank Arthur Bartels and Eusebio Gardella for helpful discussions and N. Christopher Phillips for pointing out Corollary \ref{cor-Rokhlin} to them.

\section{General strategy}\label{sec-general}
To avoid technical complications with $KK$-theory, we assume throughout that all $C^*$-algebras are separable except for
the algebra $\mathcal B(H)$ of bounded operators on a Hilbert space $H$. For a $C^*$-algebra $A$ and a finite set $F$, we write $A^{\otimes F}$ to denote the minimal tensor product $\otimes_{x\in F}A$. If $A$ is moreover unital and $Z$ is a (not necessarily finite) countable set, we denote by $A^{\otimes Z}$ the filtered colimit\footnote{We use the term `filtered colimit' (which is the standard categorical notion) instead of the terms `direct limit' or `inductive limit' which seem to be  more commonly used in $C^*$-algebra theory. We do this to be consistent with the the use of more general colimits of functors in Section \ref{sec-AF}.} $\filcolim A^{\otimes F}$ taken over all finite subsets $F\subseteq Z$ with respect to the connecting maps $A^{\otimes F}\ni x\mapsto x\otimes 1\in A^{\otimes F}\otimes A^{\otimes F'- F}\cong A^{\otimes F'}$, for finite sets $F, F'$ with $F\subseteq F'.$ Hence $A^{\otimes Z}$ is the closed linear span of the elementary tensors $\bigotimes_{z \in Z} a_z$, where $a_z \in A$ for all $z \in Z,$ and $a_z=1$ for all but finitely many $z$. 
If $G$ is a discrete group acting on $Z$, we call the $G$-action on $A^{\otimes Z}$ given by permutation of the tensor factors the \emph{Bernoulli shift}. More explicitly, $g\bigg{(}\bigotimes_{z \in Z} a_s\bigg{)}=\bigotimes_{z \in Z} a_{g^{-1} z}$ for an elementary tensor $\bigotimes_{z \in Z} a_z \in A^{\otimes Z}$. 

For a countable discrete group $G$ and a $G$-$C^*$-algebra $A$, the \emph{Baum--Connes conjecture with coefficients} (\emph{BCC}) predicts a formula for the $K$-theory $K_*(A\rtimes_r G)$ of the reduced crossed product $A\rtimes_r G$, see \cite{BCH}. The precise formulation of the conjecture is not too important for us. We mostly need one of its consequences recalled in Theorem \ref{thm-goingdown} below. Note that the Baum--Connes conjecture with coefficients has been verified for many groups, including all a-$T$-menable groups \cite{HK} and all hyperbolic groups \cite{Lafforgue}.

We refer the reader to \cite{Kasparov} for the definition and basic properties of the equivariant $KK$-groups $KK^G(A,B)$. Recall from \cite{MN} that $KK^G$ can be organized into a triangulated category\footnote{Recently, $KK^G$ was even refined to a stable $\infty$-category \cite{BEL}.} with $G$-$C^*$-algebras as objects, the groups $KK^G(A,B)$ as morphism sets, and composition given by the Kasparov product. The construction of $KK^G$-elements from $G$-$*$-homomorphisms may be interpreted as a functor from the category of $G$-$C^*$-algebras to the category $KK^G$. Furthermore, $A\mapsto K_*(A\rtimes_r G)$ factorizes through this functor.

\begin{definition} A morphism $\phi$ in $KK^G(A, B)$ is a \emph{weak $K$-equivalence} in $KK^G$ if  its restrictions to $KK^H(A,B)$  induce isomorphisms  
\[
K_\ast(A\rtimes_rH) \cong K_\ast(B\rtimes_rH)
\]
for all finite subgroups $H$ of $G$. 
\end{definition} 

\begin{remark} Weak $K$-equivalences are in general weaker than weak equivalences in $KK^G$ in the sense of \cite{MN}. The latter ones are required to induce a $KK^H$-equivalence for all finite subgroups $H$ of $G$. 
\end{remark}

The following theorem has been shown in \cite{CEO} in the setting of locally compact groups (see also \cite{MN}). 
A detailed proof in the (easier) discrete  case is given in \cite[{Section 3.5}]{CELY}.

\begin{theorem}\label{thm-goingdown} Suppose that the Baum--Connes conjecture holds for $G$ with coefficients in $A$ and $B$. Then, any weak $K$-equivalence $\phi$ in $KK^G(A, B)$ induces an isomorphism
\[
K_\ast(A\rtimes_rG) \cong K_\ast(B\rtimes_rG).
\]
\end{theorem}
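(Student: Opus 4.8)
The plan is to use the Baum--Connes conjecture to transport the statement to the topological side of the assembly map, where only the finite subgroups are visible. Write $\mu_A \colon K^{\mathrm{top}}_*(G;A) \to K_*(A\rtimes_r G)$ for the assembly map, where $K^{\mathrm{top}}_*(G;A) = \colim_X KK^G_*(C_0(X), A)$ is the filtered colimit over the cocompact $G$-CW-subcomplexes $X\subseteq \EG$, and likewise for $B$. By naturality of assembly, composition with $\phi$ fits into a commuting square with the two assembly maps as vertical arrows. Since BCC for $A$ and $B$ makes $\mu_A$ and $\mu_B$ isomorphisms, it suffices to prove that $\phi$ induces an isomorphism $K^{\mathrm{top}}_*(G;A) \to K^{\mathrm{top}}_*(G;B)$.

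To exploit the triangulated structure of $KK^G$, I would pass to the mapping cone: let $C$ be the cone of $\phi$, sitting in a distinguished triangle $A \xrightarrow{\phi} B \to C \to \Sigma A$. Restricting this triangle to any finite subgroup $H$ and applying the homological functor $K_*(-\rtimes_r H)$ yields a long exact sequence; hence $\phi$ is a weak $K$-equivalence precisely when $K_*(\mathrm{Res}^G_H C \rtimes_r H)=0$ for every finite $H\le G$. Because $K^{\mathrm{top}}_*(G;-)$ is itself homological (each $KK^G_*(C_0(X),-)$ sends triangles to long exact sequences, and filtered colimits preserve exactness), the desired isomorphism is equivalent to the vanishing $K^{\mathrm{top}}_*(G;C)=0$.

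The heart of the argument is a dévissage over the cell structure of $\EG$. For a homogeneous space with finite stabilizer one has $C_0(G/H)\cong \mathrm{Ind}_H^G \bC$, and the induction--restriction adjunction together with the Green--Julg isomorphism gives
\[
KK^G_*(C_0(G/H), C) \cong KK^H_*(\bC, \mathrm{Res}^G_H C) \cong K_*(\mathrm{Res}^G_H C \rtimes_r H) = 0.
\]
Every cocompact $X\subseteq \EG$ is built from finitely many orbits $G/H$ (with $H$ finite) by attaching cells; filtering $C_0(X)$ by the skeleta produces cofiber sequences whose subquotients are suspensions of the orbit algebras $C_0(G/H_\alpha)$. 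An induction over the skeleta, using the five lemma in the long exact sequences coming from these cofiber sequences, then forces $KK^G_*(C_0(X),C)=0$ for every cocompact $X$. Passing to the filtered colimit gives $K^{\mathrm{top}}_*(G;C)=0$, and feeding this back through the assembly square completes the proof.

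I expect the main obstacle to be exactly this last dévissage step: one must set up the cellular filtration of $C_0(X)$ carefully, identify the subquotients with direct sums of orbit algebras $C_0(G/H)$ for finite $H$, and control the bookkeeping of suspensions so that the five-lemma induction goes through uniformly in $X$. The colimit itself is harmless since filtered colimits are exact, but verifying the adjunction $KK^G_*(C_0(G/H),-)\cong K_*(\mathrm{Res}^G_H(-)\rtimes_r H)$ and the compatibility of all these identifications with the maps induced by $\phi$ is where the real work lies.
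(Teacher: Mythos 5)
Your proof is correct and is essentially the same argument as the one the paper relies on: the paper does not prove this theorem itself but cites \cite{CEO} and \cite[Section~3.5]{CELY}, where the proof is precisely your going-down argument (naturality of assembly to reduce to the topological side, passage to the mapping cone $C$, vanishing of $KK^G_\ast(C_0(G/H),C)$ for finite $H$ via the induction--restriction isomorphism plus Green--Julg, and d\'evissage over the skeleta of cocompact subcomplexes of $\EG$). The step you flag as delicate goes through exactly as you expect, since a cocompact $X\subseteq \EG$ has only finitely many equivariant cells in each dimension, so the subquotients $C_0(X_k\setminus X_{k-1})$ are finite direct sums of suspended orbit algebras $C_0(G/H_\alpha)$ with $H_\alpha$ finite, and the two-out-of-three property in the resulting long exact sequences closes the induction.
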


Let $A$ be a unital $C^*$-algebra and let $G$ be a countable group satisfying BCC. Our general strategy to compute $K_*(A^{\otimes Z}\rtimes_r G)$ is to replace $A^{\otimes Z}$ by a weakly $K$-equivalent $G$-$C^*$-algebra with computable $K$-theory and then apply Theorem \ref{thm-goingdown} above. 

The following lemma and corollary are straightforward generalizations of \cite[Theorem 2.1]{Izumi} and \cite[Corollary 6.9]{Szabo}.

\begin{lemma}[{see \cite[Theorem 2.1]{Izumi}}] \label{lem-Izumi} Let $A$ and $B$ be not necessarily unital $C^*$-algebras, let $G$ be a countable discrete group, and let $F$ be a finite $G$-set. Then there is a map from $KK(A, B)$ to $KK^G(A^{\otimes F}, B^{\otimes F})$ which sends the class of a $\ast$-homomorphism $\phi\colon A\to B$ to the class of $\phi^{\otimes F}$. Furthermore, this map is compatible with compositions (i.e. Kasparov products) and in particular sends $KK$-equivalences to a $KK^G$-equivalences. In particular, the Bernoulli shifts on $A^{\otimes F}$ and $B^{\otimes F}$ are $KK^G$-equivalent if $A$ and $B$ are $KK$-equivalent. The analogous statement holds if we replace the minimal tensor product $\otimes$ by the maximal one. \end{lemma}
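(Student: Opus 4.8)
The plan is to construct the map $KK(A,B) \to KK^G(A^{\otimes F}, B^{\otimes F})$ at the level of Kasparov cycles by taking $F$-fold external tensor products and then symmetrizing appropriately so that the result carries a genuine $G$-action. First I would recall that an element of $KK(A,B)$ is represented by a Kasparov triple $(E, \phi, T)$ consisting of a countably generated Hilbert $B$-module $E$, a $\ast$-homomorphism $\phi\colon A \to \mathcal{B}(E)$, and an operator $T \in \mathcal{B}(E)$ satisfying the usual commutator, $(T-T^*)$, and $(T^2-1)$ compactness conditions modulo $\phi(A)$. The natural candidate for the image cycle is the graded tensor product $(E^{\otimes F}, \phi^{\otimes F}, T_F)$, where $E^{\otimes F} = \bigotimes_{x \in F} E$ is a Hilbert $B^{\otimes F}$-module, $\phi^{\otimes F}$ is the $F$-fold tensor of $\phi$, and $T_F$ is built from $T$ on each tensor leg via the standard Leibniz-type formula $T_F = \sum_{x \in F} 1 \otimes \cdots \otimes T \otimes \cdots \otimes 1$ suitably regraded (or, in the even/ungraded case, one uses the product construction of \cite{Kasparov}). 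Because $F$ is a $G$-set, the permutation action of $G$ on the tensor legs permutes the factors of $E^{\otimes F}$ compatibly with $\phi^{\otimes F}$ and $T_F$, and hence equips this cycle with a $G$-equivariant structure; this produces a class in $KK^G(A^{\otimes F}, B^{\otimes F})$.

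Next I would verify that this assignment is well defined on $KK$-classes, i.e.\ that it respects homotopy, degenerate cycles, and unitary equivalence. The cleanest route is to observe that the $F$-fold tensor product is functorial for Kasparov's external product: the ungraded external product $KK(A,B) \times KK(A',B') \to KK(A \otimes A', B \otimes B')$ is well defined and associative, so iterating it $|F|$ times gives a well-defined multilinear map, and restricting to the diagonal (inputting the same class $|F|$ times) yields a well-defined map $KK(A,B) \to KK(A^{\otimes F}, B^{\otimes F})$. The point is then that when the same class is fed into every leg, the canonical flip/permutation isomorphisms intertwine the cycle with its $G$-translates, which is precisely what upgrades the plain $KK$-class to a $KK^G$-class. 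Compatibility with Kasparov products then follows from associativity and functoriality of the external product together with the fact that the permutation-equivariant structures are respected by the product; concretely, $(\psi \circ \phi)^{\otimes F} = \psi^{\otimes F} \circ \phi^{\otimes F}$ at the level of $\ast$-homomorphisms, and this identity propagates to general $KK$-elements via the product formula. Sending $KK$-equivalences to $KK^G$-equivalences is then immediate: if $\alpha \in KK(A,B)$ and $\beta \in KK(B,A)$ are mutually inverse, their images multiply to the images of $1_A$ and $1_B$, which are the identity classes $1_{A^{\otimes F}}$ and $1_{B^{\otimes F}}$ in $KK^G$.

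The main obstacle I anticipate is the careful bookkeeping of the \emph{grading and equivariance} when assembling $T_F$ from $T$, and verifying that the permutation $G$-action genuinely commutes with all the structure maps on the nose rather than merely up to homotopy. In the graded external product one must track Koszul signs arising from permuting odd-degree operators past one another, and the naive symmetric-group action on $E^{\otimes F}$ picks up these signs; one has to check that they are absorbed correctly so that the $G$-action is by honest even-degree unitaries and is compatible with $T_F$ and $\phi^{\otimes F}$. This is a standard but genuinely delicate point, and it is the reason the statement is attributed to \cite[Theorem 2.1]{Izumi}: the content is not the mere existence of the map but the compatibility with the equivariant structure. I expect that this verification is most transparently handled by realizing the construction through the symmetric monoidal structure on the $KK$-category and the functoriality of permutation actions, so that equivariance becomes formal rather than computational. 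Finally, the assertion for the maximal tensor product follows by running the identical argument with $\otimes_{\max}$ in place of $\otimes_{\min}$ throughout, since the external Kasparov product and its functoriality are available for both tensor products and none of the permutation-equivariance arguments depend on which $C^*$-tensor norm is used.
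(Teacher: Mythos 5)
Your non-equivariant construction is fine: iterating Kasparov's external product and restricting to the diagonal does give a well-defined map $KK(A,B)\to KK(A^{\otimes F},B^{\otimes F})$. The gap is in the step where you claim this class "upgrades" to $KK^G$. What the symmetric monoidal structure of $KK$ gives you is that the class $\alpha^{\times F}$ is \emph{fixed} by the permutation action of $G$ on the group $KK(A^{\otimes F},B^{\otimes F})$; but a $G$-fixed class in $KK$ is not the same thing as a class in $KK^G$. The forgetful map $KK^G(A^{\otimes F},B^{\otimes F})\to KK(A^{\otimes F},B^{\otimes F})$ is in general neither injective nor surjective onto the fixed points, so there is no formal mechanism that produces the equivariant lift — producing it is precisely the content of the lemma (and of Izumi's theorem). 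Your cycle-level fallback does not repair this. First, the Leibniz-type operator is not a Kasparov operator: already for $|F|=2$, the condition that $(T_F^2-1)\phi^{\otimes 2}(a\otimes b)$ be compact fails, because it produces terms of the form $(T^2-1)\phi(a)\otimes \phi(b)$, and $\mathcal{K}(E)\otimes \mathcal{B}(E')$ is not contained in $\mathcal{K}(E\otimes E')$ for Hilbert modules. Fixing this requires Kasparov's technical theorem (the $M$, $N$ operators, or connections), and those choices are exactly what breaks the permutation symmetry on the nose: there is no canonical $\mathrm{Sym}(F)$-invariant representative of the external power available by this route, and the same problem recurs when you try to make homotopies equivariant to check well-definedness.

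The paper circumvents both problems by changing pictures entirely: by Thomsen's theorem \cite{Thomsen}, every class in $KK(A,B)$ is represented by a completely positive asymptotic homomorphism from $SA\otimes \mathcal{K}(\ell^2(\N))$ to $SB\otimes \mathcal{K}(\ell^2(\N))$, i.e.\ by an honest (asymptotic) map rather than a cycle. A genuine map can be tensored with itself over $F$, and the result is \emph{literally} $G$-equivariant — no invariance-versus-equivariance issue, no symmetry-breaking choices — and homotopies tensor up equivariantly as well, which gives well-definedness. The suspensions and compacts are then removed by the stabilization theorem and Kasparov's equivariant Bott periodicity, yielding the map into $KK^G(A^{\otimes F},B^{\otimes F})$ with the stated compatibility with composition. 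If you want to salvage your approach, you would need some picture of $KK$ in which classes are represented by strictly functorial, strictly symmetric data (asymptotic morphisms, quasi-homomorphisms, or a genuinely symmetric-monoidal $\infty$-categorical model); the bounded Kasparov-cycle picture is the one place where this argument cannot be made to work directly.
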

\begin{proof} We recall the description of $KK^G$ in terms of asymptotic morphisms which admit an equivariant c.c.p. lift \cite{Thomsen} (see also Appendix in \cite{KS}). For any $G$-$C^*$-algebras $A_0$ and $B_0$, let $[[A_0, B_0]]^G_{\mathrm{cp}}$ be the set of homotopy equivalence classes of completely positive equivariant asymptotic homomorphisms from $A_0$ to $B_0$, see \cite[Section 2]{Thomsen}. The usual composition law for asymptotic homomorphisms restricts to $[[A_0, B_0]]^G_{\mathrm{cp}}$, \cite[Theorem 2]{Thomsen}. Let $S$ be the suspension functor $SA_0=C_0(\R)\otimes A_0$. Denote by $\tilde\Compacts_G$ the $C^*$-algebra of compact operators on $\ell^2(G\times \N)$. It is a $G$-$C^*$-algebra with respect to the regular representation on $\ell^2(G)$. The assignment $(A_0, B_0) \mapsto \widetilde {KK}^G(A_0, B_0) = [[SA_0\otimes \tilde\Compacts_G, SB_0\otimes \tilde\Compacts_G]]^G_{\mathrm{cp}}$, $(\phi\colon A_0\to B_0)\mapsto S\phi\otimes \mathrm{id}_{ \tilde\Compacts_G}$ defines a bifunctor from $G$-$C^*$-algebras to abelian groups.  Thomsen showed that there is a natural isomorphism $\widetilde {KK}^G(A_0, B_0)\cong KK^G(A_0, B_0)$ of bifunctors that sends the composition product of asymptotic morphisms to the Kasparov product, see \cite[Theorem 4.8]{Thomsen}. Now, given $\phi\in KK(A,B)$, we represent $\phi$ as a completely positive asymptotic homomorphism from $SA\otimes \Compacts(\ell^2(\N))$ to $SB\otimes \Compacts(\ell^2(\N))$. Taking the (pointwise) minimal tensor product of $\phi$ with itself over $F$, we obtain a completely positive equivariant asymptotic homomorphism $\phi^{\otimes F}$ from $(SA\otimes \Compacts(\ell^2(\N)))^{\otimes F}$ to $(SB\otimes \Compacts(\ell^2(\N)))^{\otimes F}$. This construction clearly respects homotopy equivalences. Therefore, $[\phi]\mapsto [\phi^{\otimes F}]$ defines a map $KK(A, B)\to KK^G((SA\otimes \Compacts(\ell^2(\N)))^{\otimes F}, (SB\otimes \Compacts(\ell^2(\N)))^{\otimes F})$. By construction, this map is compatible with compositions and sends a $\ast$-homomorphism $\phi\colon A\to B$ to $(S\phi \otimes \mathrm{id}_{\Compacts(\ell^2(\N))})^{\otimes F}$. By the stabilization theorem and by Kasparov's Bott-periodicity theorem\footnote{Since the $G$-action on $F$ factors through the finite group $\mathrm{Sym}(F)$, the theorem is applicable even if $G$ itself is not finite.} (see \cite[Theorem 20.3.2]{Blackadar}), the exterior tensor product by the identity on $C_0(\R)^{\otimes F}\otimes \Compacts(\ell^2(\N))^{\otimes F}$ induces a natural isomorphism
\begin{multline*}
  KK^G(A_0, B_0) \\  \cong KK^G(A_0\otimes C_0(\R)^{\otimes F}\otimes \Compacts(\ell^2(\N))^{\otimes F}, B_0\otimes C_0(\R)^{\otimes F}\otimes \Compacts(\ell^2(\N))^{\otimes F} )
  \end{multline*}
  for all $G$-$C^*$-algebras $A_0$ and $B_0$. Therefore, the map $[\phi]\mapsto [\phi^{\otimes F}]$ can be naturally regarded as a map from $KK(A, B)$ to $KK^G(A^{\otimes F}, B^{\otimes F})$. By construction, this map is compatible with compositions and sends the class of a $\ast$-homomorphism $\phi\colon A\to B$ to the class of $\phi^{\otimes F}$. The case of the maximal tensor product is proven in the exact same way.
  \end{proof}

We emphasize that the map $\phi\mapsto \phi^{\otimes F}$ is \emph{not} a group homomorphism. For example, it maps $n$ in $KK(\bC, \bC)\cong \Z$ to the class $[\pi_n]$ in $KK^G(\bC, \bC)$ of the finite dimensional unitary representation $\pi_n$ of $G$ on $\ell^2(\{1, \ldots, n\}^F)$ defined by permutation on the set $\{1, \ldots, n\}^F$.

\begin{corollary}[{see \cite[Corollary 6.9]{Szabo}}]\label{cor-weak-equivalence} 
	Let $G$ be a discrete group and let $\phi\colon A\to B$ be a unital $*$-homomorphism which is a $KK$-equivalence. Then for any countable $G$-set $Z$, the map 
		\[\phi^{\otimes Z}\colon A^{\otimes Z}\to B^{\otimes Z}\]
	is a weak $K$-equivalence in $KK^G$. In particular, $\phi^{\otimes Z}$ induces an isomorphism 
	\[K_*(A^{\otimes Z}\rtimes_r G)\to K_*(B^{\otimes Z}\rtimes_r G)\]
	whenever $G$ satisfies BCC. 
\end{corollary}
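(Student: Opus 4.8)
The plan is to verify directly that $\phi^{\otimes Z}$ satisfies the defining condition of a weak $K$-equivalence, namely that its restriction to $KK^H$ induces an isomorphism $K_*(A^{\otimes Z}\rtimes_r H)\cong K_*(B^{\otimes Z}\rtimes_r H)$ for every finite subgroup $H\leq G$; the final ``in particular'' assertion then follows at once from Theorem \ref{thm-goingdown}. So I would fix a finite subgroup $H\leq G$ and regard $Z$ as an $H$-set by restriction. The strategy is to reduce the infinite tensor product to finite ones: since $H$ is finite, every $H$-orbit in $Z$ is finite, so the finite $H$-invariant subsets $F\subseteq Z$ (for instance the finite unions of $H$-orbits) are cofinal among all finite subsets, and hence $A^{\otimes Z}=\filcolim A^{\otimes F}$ as $H$-$C^*$-algebras, the filtered colimit being taken over this directed system. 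The crucial point here is that, because $\phi$ is \emph{unital}, for $F\subseteq F'$ the square formed by $\phi^{\otimes F}$, $\phi^{\otimes F'}$ and the two connecting maps $x\mapsto x\otimes 1$ commutes (this is exactly where $\phi(1)=1$ enters), so the family $(\phi^{\otimes F})_F$ is a morphism of directed systems whose colimit is precisely $\phi^{\otimes Z}$.

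Next, for each finite $H$-invariant $F$, I would apply Lemma \ref{lem-Izumi} to the finite group $H$ and the finite $H$-set $F$: since $\phi$ is a $KK$-equivalence, this shows that $\phi^{\otimes F}\colon A^{\otimes F}\to B^{\otimes F}$ is a $KK^H$-equivalence. Applying Kasparov's descent functor $KK^H\to KK$, $C\mapsto C\rtimes_r H$, turns this into a $KK$-equivalence $A^{\otimes F}\rtimes_r H\sim B^{\otimes F}\rtimes_r H$, so $\phi^{\otimes F}$ induces an isomorphism $K_*(A^{\otimes F}\rtimes_r H)\cong K_*(B^{\otimes F}\rtimes_r H)$.

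Finally I would pass to the colimit. Reduced crossed products by the discrete group $H$ commute with filtered colimits along $H$-equivariant connecting maps, and $K$-theory is continuous, so $K_*(A^{\otimes Z}\rtimes_r H)\cong\colim_F K_*(A^{\otimes F}\rtimes_r H)$ and likewise for $B$; under these identifications the map induced by $\phi^{\otimes Z}$ is the colimit of the maps induced by the $\phi^{\otimes F}$, since $\phi^{\otimes Z}\rtimes_r H$ is genuinely the colimit of the $\phi^{\otimes F}\rtimes_r H$. A colimit of isomorphisms of directed systems is an isomorphism, so $\phi^{\otimes Z}$ induces an isomorphism on $K_*(-\rtimes_r H)$, as required. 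I expect the main (though not deep) obstacle to be this last compatibility package: one must confirm both that $K$-theory of reduced crossed products commutes with the filtered colimit along the $H$-invariant finite subsets and that the $*$-homomorphism $\phi^{\otimes Z}\rtimes_r H$ really is the colimit of the $\phi^{\otimes F}\rtimes_r H$ --- which rests on the unitality input from the first paragraph, without which the directed-system argument, and indeed the statement itself, would break down.
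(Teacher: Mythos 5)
Your proposal is correct and follows essentially the same route as the paper's own (very terse) proof: reduce to a finite subgroup $H$, use continuity of $K_*(-\rtimes_r H)$ under filtered colimits to reduce to finite ($H$-invariant) subsets of $Z$, apply Lemma \ref{lem-Izumi} there, and invoke Theorem \ref{thm-goingdown} for the final assertion. Your write-up merely makes explicit the details the paper leaves implicit, such as the cofinality of the $H$-invariant finite subsets and the role of unitality in making $(\phi^{\otimes F})_F$ a morphism of directed systems.
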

\begin{proof}
	For the first statement we may assume that $G$ is finite. Since $K(-\rtimes_r G)$ commutes with filtered colimits, we may as well assume that $Z$ is finite and apply Lemma \ref{lem-Izumi}. The second statement follows from Theorem \ref{thm-goingdown}.
\end{proof}

\begin{definition}\label{defn-JB-A0}
Let $G$ be a discrete group, let $Z$ be a countable $G$-set and let $A_0$ and $B$ be $C^*$-algebras with $A_0$ unital. We define a $G$-$C^*$-algebra $\cJ^Z_{A_0,B}$ as 
\begin{equation}\label{eq-cJ}
\cJ^Z_{A_0,B} \coloneqq \bigoplus_{F\in \FIN(Z)} A_0^{\otimes Z-F} \otimes B^{\otimes F}.
\end{equation}
The $G$-action is defined so that a group element $g\in G$ sends $A_0^{\otimes Z-F} \otimes B^{\otimes F}$ to $A_0^{\otimes Z-gF} \otimes B^{\otimes gF}$ by the obvious $\ast$-homomorphism. 
\end{definition}
In the above definition, $A_0$ should be thought of as either the complex numbers $A_0=\bC$ or a UHF-algebra (see Section \ref{sec-modulo-k}). For $A_0=\bC$, we just write $\cJ^Z_B\coloneqq \cJ^Z_{\bC,B}$. 
Note that $\cJ^Z_{A_0,B}$ is a $G$-$C_0(\FIN(Z))$-algebra in a natural way. The following lemma is crucial for our computations:

\begin{lemma}\label{lem-JB1} 
Let $G,Z,A_0$ and $B$ be as above. Then $\cJ^Z_{A_0,B}$ can naturally be identified with the filtered colimit $\filcolim_S A_0^{\otimes Z-S}\otimes (A_0\oplus B)^{\otimes S}$ taken over all finite subsets $S\subseteq Z$, with respect to the obvious connecting maps.\footnote{If $S\subseteq S'$ the connecting map is 
the tensor product of the identity on  $(A_0\oplus B)^{\otimes S}$ with the canonical inclusion 
$A_0^{\otimes Z-S}=A_0^{\otimes Z-S'}\otimes A_0^{\otimes S'-S}\hookrightarrow A_0^{\otimes Z-S'}\otimes (A_0\oplus B)^{\otimes S'-S}$.}
\end{lemma}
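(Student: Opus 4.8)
The plan is to expand the building block at each finite stage $S\subseteq Z$ using distributivity of the minimal tensor product over direct sums, to recognize it as a finite sub-direct-sum of $\cJ^Z_{A_0,B}$, and then to let the colimit assemble these into the full ($c_0$-)direct sum, finally checking $G$-equivariance.

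First I would fix a finite set $S\subseteq Z$ and use that the minimal tensor product distributes over finite direct sums to obtain a natural isomorphism
\[
(A_0\oplus B)^{\otimes S}=\bigotimes_{x\in S}(A_0\oplus B)\cong\bigoplus_{F\subseteq S}A_0^{\otimes S-F}\otimes B^{\otimes F},
\]
where the summand indexed by $F\subseteq S$ is spanned by the elementary tensors whose $x$-th factor lies in $B$ precisely for $x\in F$. Tensoring on the left by $A_0^{\otimes Z-S}$ and using $A_0^{\otimes Z-S}\otimes A_0^{\otimes S-F}=A_0^{\otimes Z-F}$ (valid since $F\subseteq S$ gives $(Z-S)\sqcup(S-F)=Z-F$) yields
\[
A_0^{\otimes Z-S}\otimes(A_0\oplus B)^{\otimes S}\cong\bigoplus_{F\subseteq S}A_0^{\otimes Z-F}\otimes B^{\otimes F}.
\]
Thus the $S$-th stage of the colimit is exactly the sub-direct-sum of $\cJ^Z_{A_0,B}$ indexed by those $F\in\FIN(Z)$ with $F\subseteq S$.

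Next I would verify that the connecting maps respect these identifications. For $S\subseteq S'$, the map described in the footnote fixes the $(A_0\oplus B)^{\otimes S}$-tensorands and embeds the extra factors $A_0^{\otimes S'-S}$ into $(A_0\oplus B)^{\otimes S'-S}$ via $a\mapsto(a,0)$. On the summand indexed by $F\subseteq S$ this sends an elementary tensor with $B$-factors exactly on $F$ to one with $B$-factors again exactly on $F$, the new factors sitting in $A_0$; hence the connecting map restricts to the identity from the $F$-summand of the $S$-stage onto the $F$-summand of the $S'$-stage. Since every $F\in\FIN(Z)$ satisfies $F\subseteq S$ for all sufficiently large finite $S$, the filtered colimit is the closure of the increasing union of the finite sub-direct-sums $\bigoplus_{F\subseteq S}$, which is precisely the ($c_0$-)direct sum $\bigoplus_{F\in\FIN(Z)}A_0^{\otimes Z-F}\otimes B^{\otimes F}=\cJ^Z_{A_0,B}$.

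Finally I would check that this identification is $G$-equivariant, which is what \enquote{naturally} refers to: the Bernoulli shift carries the $S$-stage onto the $gS$-stage by permuting tensor factors, and under the decomposition above it sends the $F$-summand onto the $gF$-summand, matching exactly the prescribed action on $\cJ^Z_{A_0,B}$. The only points requiring genuine care are the distributivity of the minimal tensor product over direct sums and the bookkeeping that the connecting maps coincide with the inclusions of summands; neither is a serious obstacle, so the lemma amounts to a combinatorial identity phrased in $C^*$-algebraic language.
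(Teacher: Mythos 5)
Your proof is correct and follows essentially the same route as the paper, whose entire argument is the single observation that $A_0^{\otimes Z-S}\otimes (A_0\oplus B)^{\otimes S}\cong \bigoplus_{F\subseteq S} A_0^{\otimes Z-F}\otimes B^{\otimes F}$ for each finite $S\subseteq Z$. Your additional verifications (that the connecting maps restrict to identities on the $F$-summands and that the identification is $G$-equivariant) are exactly the bookkeeping the paper leaves implicit.
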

\begin{proof}
Just observe that for all finite subsets $S\subseteq Z$ we have canonical isomorphisms
\[
A_0^{\otimes Z-S}\otimes (A_0\oplus B)^{\otimes S}\cong 
A_0^{\otimes Z-S}\otimes \bigg{(}\bigoplus_{F\subseteq S} A_0^{\otimes S-F}\otimes B^{\otimes F}\bigg{)}
\cong \bigoplus_{F\subseteq S} A_0^{\otimes Z-F}\otimes B^{\otimes F}.
\]
\end{proof}

\begin{theorem}[Theorem \ref{introthm-JB}]\label{thm-replace-by-JB-A0}
	Let $A$, $A_0$ and $B$ be $C^*$-algebras with $A$ and $A_0$ unital and let $\iota\colon A_0\to A$ be a unital $\ast$-homomorphism. Let $\phi\in KK(B, A)$ be an element such that $\iota\oplus \phi\in KK(A_0 \oplus B,A)$ is a $KK$-equivalence. Then for each countable $G$-set $Z$, there is a weak $K$-equivalence 
in $KK^G(\cJ^Z_{A_0,B},A^{\otimes Z})$.
	If $G$ moreover satisfies BCC, there is an isomorphism
 \begin{multline*}
 K_*\left(A^{\otimes Z}\rtimes_r G\right)\cong K_*\left(\cJ^Z_{A_0,B}\rtimes_r G\right)\\ \cong \bigoplus_{[F]\in G\backslash \FIN(Z)}K_*\left(\left(A_0^{\otimes Z-F}\otimes B^{\otimes F}\right)\rtimes_r G_F\right),
 \end{multline*}
	where $\FIN(Z)$ denotes the set of finite subsets of $Z$ and where $G_F$ denotes the stabilizer of $F$ for the action of $G$ on $\FIN(Z)$. 
\end{theorem}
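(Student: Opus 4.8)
The plan is to split the argument into four moves: build a comparison morphism in $KK^G$, prove it is a weak $K$-equivalence, feed this into the Going-Down principle, and finally decompose the crossed product of $\cJ^Z_{A_0,B}$ over $G$-orbits of finite subsets. First I would produce a $G$-equivariant class $\Psi\in KK^G(\cJ^Z_{A_0,B},A^{\otimes Z})$ which, morally, applies $\iota\oplus\phi$ coordinate-wise over $Z$ with the unital $*$-homomorphism $\iota$ as background; on the summand $A_0^{\otimes Z-F}\otimes B^{\otimes F}$ it applies $\iota$ to the $Z-F$ factors and $\phi$ to the $F$ factors. Since $\phi$ is only a $KK$-class, I would make this rigorous exactly as in the proof of Lemma \ref{lem-Izumi}: represent $\iota\oplus\phi$ by a completely positive equivariant asymptotic homomorphism in Thomsen's picture and form the infinite tensor product over $Z$, keeping $\iota$ as an honest background $*$-homomorphism. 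Because the Bernoulli shift merely permutes coordinates while the same asymptotic homomorphism is used at every coordinate, the resulting element is automatically $G$-equivariant.

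The heart of the proof is showing $\Psi$ is a weak $K$-equivalence, i.e. that it induces an isomorphism on $K_*(-\rtimes_r H)$ for every finite subgroup $H\le G$. Fix such an $H$. Since the finite $H$-invariant subsets $S\subseteq Z$ are cofinal (replace $S$ by $HS$), Lemma \ref{lem-JB1} lets me write, as $H$-algebras,
\[
\cJ^Z_{A_0,B}=\filcolim_{S} A_0^{\otimes Z-S}\otimes(A_0\oplus B)^{\otimes S}, \qquad A^{\otimes Z}\cong\filcolim_{S} A_0^{\otimes Z-S}\otimes A^{\otimes S},
\]
where the second identification, applying $\iota$ on each newly transferred background factor, uses only that $\iota$ is unital. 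Under these presentations $\Psi$ is the colimit of the stagewise maps $\mathrm{id}_{A_0^{\otimes Z-S}}\otimes(\iota\oplus\phi)^{\otimes S}$; the connecting squares commute because on a transferred coordinate both routes compute the class of $A_0\hookrightarrow A_0\oplus B\xrightarrow{\iota\oplus\phi}A$, which equals $\iota$. Each stagewise map is a $KK^H$-equivalence, since $(\iota\oplus\phi)^{\otimes S}$ is one by Lemma \ref{lem-Izumi} (as $\iota\oplus\phi$ is a $KK$-equivalence and $S$ is a finite $H$-set) and the exterior product with the identity preserves $KK^H$-equivalences. As $K_*(-\rtimes_r H)$ factors through $KK^H$ and commutes with filtered colimits, $\Psi$ induces an isomorphism on $K_*(-\rtimes_r H)$, as required.

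With $\Psi$ a weak $K$-equivalence and $G$ satisfying BCC, Theorem \ref{thm-goingdown} gives the first isomorphism $K_*(A^{\otimes Z}\rtimes_r G)\cong K_*(\cJ^Z_{A_0,B}\rtimes_r G)$. For the second, I would compute $K_*(\cJ^Z_{A_0,B}\rtimes_r G)$ directly from $\cJ^Z_{A_0,B}=\bigoplus_{F\in\FIN(Z)}A_0^{\otimes Z-F}\otimes B^{\otimes F}$, on which $G$ permutes the summands along its action on $\FIN(Z)$. Grouping the summands by $G$-orbit, the orbit through $F$ assembles into the induced algebra $\mathrm{Ind}_{G_F}^G\!\left(A_0^{\otimes Z-F}\otimes B^{\otimes F}\right)$, whose reduced crossed product by $G$ is isomorphic to $\left(A_0^{\otimes Z-F}\otimes B^{\otimes F}\right)\rtimes_r G_F$ stabilized by $\mathcal{K}(\ell^2(G/G_F))$, by Green's imprimitivity theorem. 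Since $K_*$ commutes with the countable $c_0$-direct sum and is a Morita invariant, summing over $[F]\in G\backslash\FIN(Z)$ yields the stated formula.

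I expect the main obstacle to be the construction in the first paragraph together with the colimit matching in the second: the difficulty is to exhibit an honest single element of $KK^G$, rather than merely a compatible family of $KK^H$-morphisms (which the $\lim^1$-term of the Milnor sequence would otherwise obstruct from assembling), and to verify that the two filtered-colimit presentations align so that the stagewise comparison maps become $KK^H$-equivalences. Once this is in place the Going-Down step and the orbit decomposition are essentially routine.
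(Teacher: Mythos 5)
Your overall architecture (orbit-wise comparison classes, a filtered-colimit argument over finite subgroups, the Going--Down principle, and Green's imprimitivity for the final decomposition) is exactly the paper's, and your second through fourth paragraphs reproduce its proof essentially verbatim. The gap is concentrated in your first paragraph, the construction of $\Psi\in KK^G(\cJ^Z_{A_0,B},A^{\otimes Z})$. The proposed ``infinite tensor product over $Z$'' of the asymptotic homomorphism representing $\iota\oplus\phi$ is not a valid construction: in Thomsen's picture each tensor factor carries a suspension $C_0(\R)$ and a copy of $\Compacts$, and the proof of Lemma \ref{lem-Izumi} removes these via Kasparov's Bott periodicity, which is available only for \emph{finitely} many factors; an infinite pointwise tensor product of asymptotic morphisms is moreover not asymptotically multiplicative (the errors at infinitely many coordinates do not tend to $0$), and $C_0(\R)^{\otimes Z}$ is not even a well-defined object for infinite $Z$. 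Even granting the summand-wise classes --- which indeed only require $(\iota\oplus\phi)^{\otimes F}$ for finite $F$, with $\iota$ applied as an honest $*$-homomorphism on $Z-F$ --- you still must assemble maps out of the infinitely many direct summands of $\cJ^Z_{A_0,B}$ into a single $KK^G$-class, and a single asymptotic morphism on the direct sum would force the ranges of the summand maps to be asymptotically orthogonal in $A^{\otimes Z}$, which they are not (the $F=\emptyset$ summand already maps unitally). So ``automatically $G$-equivariant'' does not rescue an object that has not been constructed.

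The missing idea is precisely the paper's Lemma \ref{lem-KKproduct}. Since $\cJ^Z_{A_0,B}$ is a $G$-$C_0(\FIN(Z))$-algebra over the discrete $G$-set $\FIN(Z)$, it decomposes as a direct sum of induced algebras $\mathrm{Ind}_{G_F}^G\bigl(A_0^{\otimes Z-F}\otimes B^{\otimes F}\bigr)$, and countable additivity of $KK$ in the first variable combined with the compression isomorphism gives a natural isomorphism
\[
KK^G\bigl(\cJ^Z_{A_0,B}, A^{\otimes Z}\bigr)\;\cong\;\prod_{[F]\in G\backslash \FIN(Z)} KK^{G_F}\bigl(A_0^{\otimes Z-F}\otimes B^{\otimes F},\, A^{\otimes Z}\bigr).
\]
Hence the obstacle you flag at the end evaporates: there is no $\lim^1$ issue, and a family $\{\Phi_F\}$ of orbit-wise classes satisfying the conjugation compatibility $\Phi_{gF}=g(\Phi_F)$ literally \emph{is} an element of $KK^G$. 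Each $\Phi_F$ is built from finite data only, namely the composition of the inclusion $A_0^{\otimes Z-F}\otimes B^{\otimes F}\to A_0^{\otimes Z-F}\otimes(A_0\oplus B)^{\otimes F}$, the class $\mathrm{id}\otimes(\iota\oplus\phi)^{\otimes F}$ from Lemma \ref{lem-Izumi}, and the $*$-homomorphism $\iota^{\otimes Z-F}\otimes\mathrm{id}\colon A_0^{\otimes Z-F}\otimes A^{\otimes F}\to A^{\otimes Z}$. Once $\Psi$ is obtained this way, your remaining three paragraphs go through unchanged.
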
 
For the proof, we need the following well-known lemma:

\begin{lemma}\label{lem-KKproduct} 
Let $Z$ be a countable $G$-set and let $A_Z$ be a $G$-$C_0(Z)$-algebra. For any $G$-$C^*$-algebra $D$ and any choice of representatives $z$ for $[z]\in G\setminus Z$, there is a natural isomorphism
\[
\Psi=(\Psi_z)_{[z]}\colon KK^G(A_Z, D) \xrightarrow{\cong} \prod_{[z] \in G\backslash Z}KK^{G_z}(A_z, D)
\]
where $G_z$ is the stabilizer of $z$ and $A_z$ is the fiber of $A_Z$ at $z\in Z$. 
The map $\Psi$ is independent of the choice of representatives in the sense that for any $z\in Z$ and $g\in G$, the diagram 
\[\xymatrix{
	KK^G(A_Z, D)\ar[r]^{\Psi_z}\ar[dr]_{\Psi_{gz}}	&KK^{G_z}(A_z, D)\ar[d]_\cong^g\\
								&KK^{G_{gz}}(A_{gz},D)
}\]
commutes where $g\colon KK^{G_z}(A_z, D)\to KK^{G_{gz}}(A_{gz},D)$ is given by conjugation with $g$. 

\end{lemma}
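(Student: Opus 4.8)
The plan is to reduce to a single orbit and there invoke the induction isomorphism in equivariant $KK$-theory. First I would use the $G$-$C_0(Z)$-algebra structure to split $A_Z$ along the $G$-orbits of $Z$. Writing $Z=\bigsqcup_{[z]\in G\backslash Z}Gz$ as a disjoint union of orbits, each orbit $Gz$ is a $G$-invariant open-and-closed subset of the discrete set $Z$, so $C_0(Z)=\bigoplus_{[z]}C_0(Gz)$ as a $G$-$C^*$-algebra, and correspondingly $A_Z=\bigoplus_{[z]}A_{Gz}$, where $A_{Gz}:=C_0(Gz)\cdot A_Z$ is the summand supported on the orbit. Since $Z$ is countable this is a countable $c_0$-direct sum, and as $c_0$-direct sums are coproducts in $KK^G$, the functor $KK^G(-,D)$ turns it into a product:
\[
KK^G(A_Z,D)\cong\prod_{[z]\in G\backslash Z}KK^G(A_{Gz},D).
\]
The inclusion of the summand $A_{Gz}\hookrightarrow A_Z$ realizes the projection onto the $[z]$-factor.

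Second, I would treat a single orbit. Fix a representative $z$ and put $H:=G_z$. The point $z$ is fixed by $H$, so the fiber $A_z=A_Z/(C_0(Z\setminus\{z\})A_Z)$ is an honest $H$-$C^*$-algebra, and as a $G$-$C_0(Gz)$-algebra over $Gz\cong G/H$ the summand $A_{Gz}$ is canonically the induced algebra $\operatorname{Ind}_H^G A_z$. The heart of the argument is the induction isomorphism
\[
KK^G(\operatorname{Ind}_H^G A_z,D)\xrightarrow{\ \cong\ }KK^H(A_z,\operatorname{Res}_H^G D),
\]
i.e. the $\operatorname{Ind}\dashv\operatorname{Res}$ adjunction in equivariant Kasparov theory (Kasparov \cite{Kasparov}; see also the induction structure used in \cite{MN} and in the work of Chabert--Echterhoff). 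Concretely this isomorphism is implemented by restricting along $H\le G$ and then pulling back along the $H$-equivariant inclusion $\eta_z\colon A_z\hookrightarrow\operatorname{Res}_H^G A_{Gz}$ of the fiber over $z$ as a direct summand --- a genuine $\ast$-homomorphism precisely because $z$ is an $H$-fixed point of the orbit. This composite $\alpha\mapsto \eta_z^\ast(\operatorname{Res}_H^G\alpha)$ is exactly the map $\Psi_z$ of the statement. Composing the two steps and choosing one representative per orbit yields the claimed isomorphism $\Psi=(\Psi_z)_{[z]}$ onto $\prod_{[z]}KK^{G_z}(A_z,D)$.

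Third, I would check independence of the chosen representatives, i.e. that $g\circ\Psi_z=\Psi_{gz}$ for the conjugation map $g\colon KK^{G_z}(A_z,D)\to KK^{G_{gz}}(A_{gz},D)$. This is a formal consequence of the $G$-equivariance of everything in sight: the action of $g$ carries the fiber $A_z$ isomorphically onto $A_{gz}$, conjugates $H=G_z$ to $G_{gz}=gHg^{-1}$, and intertwines the inclusions $\eta_z$ and $\eta_{gz}$. Since any $\alpha\in KK^G(A_Z,D)$ is $G$-equivariant, restricting it to $G_z$ versus $G_{gz}$ differs exactly by this conjugation, so the triangle commutes by construction.

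The step I expect to be the main obstacle is the induction isomorphism for a single orbit. Everything else (the orbit splitting, the additivity, and the naturality diagram) is formal, but identifying $KK^G(\operatorname{Ind}_H^G A_z,D)$ with $KK^{H}(A_z,\operatorname{Res}_H^G D)$ --- and in particular verifying that the abstract adjunction is implemented by the concrete fiber-inclusion map $\Psi_z$ rather than merely by some abstract isomorphism --- requires the full strength of Kasparov's induction theorem. Since the paper flags this lemma as well-known, I would cite this adjunction rather than reprove the construction of the induced cycle together with the verification that the unit and counit are mutually inverse.
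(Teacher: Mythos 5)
Your proposal is correct and follows essentially the same route as the paper: decompose $A_Z$ over the orbits as a direct sum of induced algebras $\operatorname{Ind}_{G_z}^G A_z$, use additivity of $KK^G(-,D)$ in the first variable to turn the sum into a product, identify each factor map with the compression map $KK^G(\operatorname{Ind}_{G_z}^G A_z,D)\to KK^{G_z}(A_z,D)$ (the $\operatorname{Ind}\dashv\operatorname{Res}$ adjunction, which the paper cites from Chabert--Echterhoff and Meyer--Nest rather than reproving), and deduce independence of representatives from the fact that conjugation by $g$ acts trivially on $KK^G(A_Z,D)$. The only cosmetic difference is that you split into orbits before invoking the induced-algebra identification, while the paper does both in one step via the isomorphism $A_Z\cong\bigoplus_{[z]}\operatorname{Ind}_{G_z}^G A_z$.
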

\begin{proof}
Recall that for a subgroup $H\subseteq G$, and an $H$-$C^*$-algebra $B$, the induced $G$-$C^*$-algebra $\mathrm{Ind}_H^GB$ is defined as 
$$\operatorname{Ind}_H^G B:=\left\{f \in C_b(G, B)\,\middle\vert\, \begin{array}{c}h(f(s))=f(sh^{-1}), s\in G, h\in H \\ \text { and } s H \mapsto\|f(s)\| \in C_0(G / H)\end{array}\right\}$$
 equipped with the left-translation $G$-action (see \cite[Section 2]{CE} for example).
Since $Z$ is discrete, we have a natural isomorphism 
\begin{equation}\label{eq-decompose-induced}
A_Z = \bigoplus_{z\in Z}A_z \cong \bigoplus_{[z]\in G\backslash Z} \mathrm{Ind}_{G_z}^{G}A_z,
\end{equation}
where the last isomorphism follows from \cite[Theorem 3.4.13]{CELY}. 
Now let
	\[\Psi=(\Psi_z)_{[z]}\colon KK^G(A_Z,D)\to \prod_{[z]\in G\backslash Z}KK^{G_z}(A_z,D)\]
be the map with components
	\[\Psi_z\colon KK^G(A_Z,D)\xrightarrow{\mathrm{Res}_G^{G_z}} KK^{G_z}(A_Z,D)\xrightarrow{\iota_z^*} KK^{G_z}(A_z,D),\]
where $\iota_z\colon A_z\hookrightarrow A_Z$ is the inclusion. The second part of the lemma follows from the construction of $\Psi$ and the fact that conjugation by any $g\in G$ acts trivially on $KK^G(A_Z,D)$. We show that $\Psi$ is an isomorphism. 
By \eqref{eq-decompose-induced} and \cite[Theorem 2.9]{Kasparov}, we can identify $\Psi$ with the product, taken over all $[z]\in G\backslash Z$, of the compression maps
	\[\mathrm{comp}_{G_z}^G\colon KK^G(\mathrm{Ind}_{G_z}^GA_z,D)\to KK^{G_z}(A_z,D).\]
These are isomorphisms by \cite[Proposition 5.14]{CE} (see also \cite[(20)]{MN}). 
\end{proof}

\begin{proof}[Proof of Theorem \ref{thm-replace-by-JB-A0}]
For each orbit $[F]$ in $G \backslash \FIN(Z)$ with stabilizer $G_F$, we define an element 
\[
\Phi_{F}\in KK^{G_F}(A_0^{\otimes Z-F}\otimes B^{\otimes F}, A^{\otimes Z})
\]
as the composition 
\[
A_0^{\otimes Z-F}\otimes B^{\otimes F} \to A_0^{\otimes Z-F}\otimes(A_0 \oplus B)^{\otimes F} \to  A_0^{\otimes Z-F}\otimes A^{\otimes F} \to  A^{\otimes Z}.
\]
Here the first and the third map are the obvious maps and the second map is the tensor product of the identity on $A_0^{\otimes Z-F}$ and the $KK^{G_F}$-equivalence $(\iota \oplus \phi)^{\otimes F}$ obtained in Lemma \ref{lem-Izumi}.
When $F$ is the empty set, we define $\Phi_{F}$ as the unital map $\iota^{\otimes Z}\colon A_0^{\otimes Z} \to A^{\otimes Z}$. 
By Lemma \ref{lem-KKproduct} the family $\{\Phi_F: [F]\in G\backslash\FIN(Z)\}$ defines an element $\Phi\in KK^G(J_{A_0,B}^Z, A^{\otimes Z})$ that does not depend on the choice of representatives $F$ for each $[F]$ (since we have $\Phi_{gF}=g(\Phi_F)$ for every $g\in G$).

We show that $\Phi$ is a weak $K$-equivalence. By construction, for any finite subgroup $H$ of $G$ and for any finite $H$-subset $S$ of $Z$, the element $\Phi$ may be restricted to
\[
\Phi_S\colon  \bigoplus_{F\in \FIN(Z), F\subset S} A_0^{\otimes Z-F}\otimes B^{\otimes F} \to A_0^{\otimes Z-S}\otimes A^{\otimes S}
\]
in $KK^H$. Using Lemma \ref{lem-JB1}, $\Phi_S$ can be identified with the tensor product of the identity on $A_0^{\otimes Z-S}$ and the $KK^H$-equivalence $(\iota \oplus \phi)^{\otimes S} \in KK^H((A_0 \oplus B)^{\otimes S}, A^{\otimes S})$ via the isomorphism 
\[(A_0\oplus B)^{\otimes S}\cong \bigoplus_{F\in \FIN(Z), F\subset S}A_0^{\otimes S-F}\otimes B^{\otimes F}.\]
Thus each $\Phi_S$ induces an isomorphism 
\[K_*\left(\left(A_0^{\otimes Z-S}\otimes(A_0\oplus B)^{\otimes S}\right)\rtimes H\right)\cong K_*\left(\left(A_0^{\otimes Z-S}\otimes A^{\otimes S}\right)\rtimes H\right).\] 
By taking the filtered colimit over finite $H$-subsets $S\subseteq Z$, we see that $\Phi$ is a weak $K$-equivalence. 

The $K$-theory computation follows from the fact that 
$\cJ^Z_{A_0,B}$ is a $G$-$C_0(\FIN(Z))$-algebra for the discrete $G$-set $\FIN(Z)$ together with Green's imprimitivity Theorem (e.g., see \cite[Remark 3.13]{CEL} for details).
\end{proof}

The following special case of Theorem \ref{thm-replace-by-JB-A0} will be used to compute examples in Section \ref{sec-examples}. Its main advantage is that we do not have to construct the element $\phi$ in order to apply it. 
\begin{corollary}\label{cor-UCT}
	Let  $G$ be a discrete group satisfying BCC. Let $Z$ be a countable $G$-set and let $A$ be a unital $C^*$-algebra satisfying the UCT such that the unital inclusion $\bC\to A$ induces a split injection $K_*(\bC)\to K_*(A)$. Denote by $\tilde K_*(A)$ its cokernel and let $B$ be any $C^*$-algebra satisfying the UCT with $K_*(B)\cong \tilde K_*(A)$. Then we have 
	\[K_*\left(A^{\otimes Z}\rtimes_r G\right)\cong K_*(\cJ_B^Z\rtimes_r G)\cong \bigoplus_{[F]\in G\backslash \FIN(Z)}K_*\left(B^{\otimes F}\rtimes_r G_F\right).\]
	In particular, if $Z=G$ equipped with the left translation, then 
	$K_*\left(A^{\otimes G}\rtimes_r G\right)$ only depends on $G$ and $\tilde K_*(A)$. 
\end{corollary}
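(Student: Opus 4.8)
The plan is to recognize the corollary as the special case $A_0 = \bC$ of Theorem \ref{thm-replace-by-JB-A0}. Everything except the construction of a suitable $\phi \in KK(B,A)$ is then automatic: once I produce $\phi$ with the property that $\iota \oplus \phi \in KK(\bC \oplus B, A)$ is a $KK$-equivalence (where $\iota\colon \bC \to A$ is the unital inclusion), Theorem \ref{thm-replace-by-JB-A0} applies verbatim and yields
\[
K_*\left(A^{\otimes Z}\rtimes_r G\right)\cong K_*(\cJ_B^Z\rtimes_r G)\cong \bigoplus_{[F]\in G\backslash \FIN(Z)}K_*\left(\left(\bC^{\otimes Z-F}\otimes B^{\otimes F}\right)\rtimes_r G_F\right),
\]
and since $\bC^{\otimes Z-F}=\bC$ the summands collapse to $K_*\left(B^{\otimes F}\rtimes_r G_F\right)$, as claimed. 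So the entire content of the corollary is the existence of $\phi$, which is where the UCT and split-injectivity hypotheses are used.

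To build $\phi$, I would invoke the UCT twice. First, both $\bC \oplus B$ and $A$ lie in the UCT class ($\bC$ trivially, $B$ and $A$ by hypothesis, and the class is closed under finite direct sums). Second, the UCT exact sequence gives surjectivity of the natural map $KK(B,A)\to \mathrm{Hom}\!\left(K_*(B),K_*(A)\right)$, so every grading-preserving homomorphism $K_*(B)\to K_*(A)$ is realized by some $\phi\in KK(B,A)$. It therefore suffices to produce a homomorphism $\psi\colon K_*(B)\to K_*(A)$ such that $\iota_*\oplus\psi\colon K_*(\bC)\oplus K_*(B)\to K_*(A)$ is an isomorphism, and then take $\phi$ realizing $\psi$.

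This last point is an elementary diagram chase that uses split-injectivity precisely. By assumption the sequence $0\to K_*(\bC)\xrightarrow{\iota_*}K_*(A)\xrightarrow{q}\tilde K_*(A)\to 0$ is split; fixing a section $s\colon \tilde K_*(A)\to K_*(A)$ gives $K_*(A)=\iota_*(K_*(\bC))\oplus s(\tilde K_*(A))$, and composing $s$ with the chosen isomorphism $\theta\colon K_*(B)\xrightarrow{\cong}\tilde K_*(A)$ produces $\psi=s\circ\theta$. Then $\iota_*\oplus\psi$ factors as the composite of the isomorphisms $\mathrm{id}\oplus\theta$ and $\iota_*\oplus s$, hence is itself an isomorphism. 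Choosing $\phi$ to realize $\psi$, the element $\iota\oplus\phi$ induces $\iota_*\oplus\psi$ on $K$-theory; since both algebras satisfy the UCT, a $KK$-morphism inducing an isomorphism on $K$-theory is automatically invertible, so $\iota\oplus\phi$ is the desired $KK$-equivalence. The main obstacle here is bookkeeping rather than depth: one must make sure that the two applications of the UCT (surjectivity onto $\mathrm{Hom}$, and invertibility detection on $K$-theory) are applied to the correct pair of algebras, and that split-injectivity is exactly what forces $\iota_*\oplus\psi$ to be an isomorphism rather than merely injective or surjective.

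For the final clause, I would note that the formula is expressed entirely in terms of $G$ and $B$, and that $B$ is determined up to $KK$-equivalence by $\tilde K_*(A)$ (again by the UCT, since UCT algebras with isomorphic $K$-theory are $KK$-equivalent). It then remains to check that the right-hand side is insensitive to replacing $B$ by a $KK$-equivalent algebra $B'$: a $KK$-equivalence $B\sim_{KK}B'$ yields, via Lemma \ref{lem-Izumi}, a $KK^{G_F}$-equivalence $B^{\otimes F}\sim_{KK^{G_F}}B'^{\otimes F}$ for each $F$, and applying Kasparov's descent homomorphism $j_r$ gives $K_*\left(B^{\otimes F}\rtimes_r G_F\right)\cong K_*\left(B'^{\otimes F}\rtimes_r G_F\right)$ termwise. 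Hence in the case $Z=G$ the whole group depends only on $G$ and $\tilde K_*(A)$.
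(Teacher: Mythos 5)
Your proposal is correct and follows essentially the same route as the paper: lift the splitting $K_*(B)\cong\tilde K_*(A)\to K_*(A)$ to an element $\phi\in KK(B,A)$ via the UCT, observe that $\iota\oplus\phi$ induces an isomorphism on $K$-theory and hence is a $KK$-equivalence (both algebras being in the UCT class), and then apply Theorem \ref{thm-replace-by-JB-A0} with $A_0=\bC$. The paper's proof is just a terser version of this, leaving the diagram chase and the $KK$-invariance of the right-hand side (which you justify via Lemma \ref{lem-Izumi} and descent, and the paper handles by citing uniqueness of $B$ up to $KK$-equivalence) implicit.
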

Note that $B$ exists and is uniquely determined up to $KK$-equivalence (see \cite[Corollary 23.10.2]{Blackadar}).
\begin{proof}
	By the UCT, the inclusion map $K_*(B)\cong \tilde K_*(A)\to K_*(A)$ is induced by an element $\phi\in KK(B,A)$. By construction, $\iota\oplus\phi\in KK(\bC\oplus B,A)$ is a $KK$-equivalence, so that we can apply Theorem \ref{thm-replace-by-JB-A0}.
\end{proof}

As another special case of Theorem \ref{thm-replace-by-JB-A0}, we recover
\begin{corollary}[{\cite[Example 3.17]{CEL}, \cite[Proposition 2.4]{XinLi}}]\label{cor-Cantor}Let $G$ be a discrete group satisfying BCC and let $Z$ be a countable $G$-set. Then for $n\geq1$, we have 
\[
K_\ast (C(\{0, 1, \ldots, n \}^Z)  \rtimes_rG) 
\cong  \bigoplus_{[F]\in G\backslash \FIN(Z)} \bigoplus_{[S] \in G_F\backslash (\{1, \ldots, n\}^F) } K_\ast(C^*_r(G_S)).\]
Moreover, if $Z=G$ with the left translation action, we get  
\begin{align*}
K_\ast(&C(\{0, 1, \ldots, n \}^G)  \rtimes_rG) \\
&\cong  K_\ast(C^*_r(G)) \oplus \bigoplus_{[C]\in \cC} \bigoplus_{[X] \in N_C \backslash F(C) }  \bigoplus_{[S] \in C \backslash \{1, \ldots, n\}^{C\cdot X} } K_\ast(C^*_r(C_S)).
\end{align*}
Here $\mathcal C$ denotes the set of all conjugacy classes of finite subgroups of $G$, $F(C)$  the nonempty finite subsets of $C\backslash G$, $N_C=\{g\in G: gCg^{-1}=C\}$ the normalizer of $C$ in $G$, and 
$C_S=G_S\cap C$ the stabilizer 
of $S$ in $C$.
\end{corollary}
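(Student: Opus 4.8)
The plan is to recognize $C(\{0,1,\dots,n\}^Z)$ as a Bernoulli shift and feed it into Corollary \ref{cor-UCT}. Since $C(\{0,1,\dots,n\})\cong\bC^{n+1}$ and tensor products of commutative $C^*$-algebras correspond to products of spaces, we have $C(\{0,1,\dots,n\}^Z)\cong A^{\otimes Z}$ with $A=\bC^{n+1}$ carrying the Bernoulli shift, i.e.\ the permutation action induced by the $G$-action on $Z$. The algebra $A$ is commutative, hence satisfies the UCT, and the unital inclusion $\iota\colon\bC\to A$ (the constant functions, i.e.\ the diagonal $\lambda\mapsto(\lambda,\dots,\lambda)$) induces the split injection $\Z\cong K_0(\bC)\to K_0(A)\cong\Z^{n+1}$, $1\mapsto(1,\dots,1)$, split by any coordinate projection, while $K_1(A)=0$. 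Thus $\tilde K_0(A)\cong\Z^{n}$ and $\tilde K_1(A)=0$.

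Next I would choose the concrete model $B=\bC^{n}\cong C(\{1,\dots,n\})$, which satisfies the UCT and has $K_\ast(B)\cong\tilde K_\ast(A)$; concretely, taking $\phi\colon\bC^{n}\to\bC^{n+1}$ to be the inclusion of the last $n$ coordinates makes $\iota\oplus\phi$ induce a unipotent integer matrix on $K_0$, hence a $KK$-equivalence. Corollary \ref{cor-UCT} then gives
\[
K_\ast\left(C(\{0,\dots,n\}^Z)\rtimes_rG\right)\cong\bigoplus_{[F]\in G\backslash\FIN(Z)}K_\ast\left(B^{\otimes F}\rtimes_rG_F\right).
\]
Because $B=C(\{1,\dots,n\})$, the factor $B^{\otimes F}$ is canonically $C(\{1,\dots,n\}^F)$, and the Bernoulli action of $G_F$ on $B^{\otimes F}$ is exactly the permutation action induced by $G_F$ acting on the finite set $\{1,\dots,n\}^F$. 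Since $\{1,\dots,n\}^F$ is a finite discrete $G_F$-set, one last application of Green's imprimitivity theorem (the tool already invoked in the proof of Theorem \ref{thm-replace-by-JB-A0}) decomposes the crossed product over orbits, giving $K_\ast\left(C(\{1,\dots,n\}^F)\rtimes_rG_F\right)\cong\bigoplus_{[S]\in G_F\backslash\{1,\dots,n\}^F}K_\ast(C^*_r(G_S))$, where $G_S=\Stab_G(S)\subseteq G_F$ (so that $G_S=(G_F)_S$). Substituting this into the previous display produces the first formula.

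For the second formula I would specialize to $Z=G$ with left translation and reorganize the orbit sum by stabilizer type. The empty set is a single orbit with $G_\emptyset=G$ and $\{1,\dots,n\}^\emptyset$ a one-point $G$-set, contributing the summand $K_\ast(C^*_r(G))$. For nonempty $F$ the action is free, so the orbit map $G_F\to F$, $g\mapsto gx_0$, is injective and $C:=G_F$ is a finite subgroup; as $C$ fixes $F$ under left multiplication, $F$ is a union of right cosets, i.e.\ $F=C\cdot X$ for the nonempty finite subset $X=\pi_C(F)\subseteq C\backslash G$ with $\pi_C\colon G\to C\backslash G$ the quotient map. The normalizer $N_C$ acts on $C\backslash G$ by $g\cdot Ch=Cgh$, hence on $F(C)$, and two such sets $C\cdot X_1,\,C\cdot X_2$ of full stabilizer $C$ lie in the same $G$-orbit precisely when $X_1,X_2$ lie in the same $N_C$-orbit, since $g(C\cdot X_1)=C\cdot X_2$ forces $gCg^{-1}=C$. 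Finally, when $C=G_F$ we have $G_S\subseteq G_F=C$, so $C_S=C\cap G_S=G_S$ and the $C$-orbits of colorings in $\{1,\dots,n\}^{C\cdot X}$ coincide with the $G_F$-orbits in $\{1,\dots,n\}^F$; re-indexing the first formula along this correspondence yields the second.

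The bookkeeping of the last paragraph is the main obstacle: one must verify that $[F]\mapsto\left([G_F],[\pi_{G_F}(F)]\right)$ is a genuine bijection onto the indexing set, which forces the understanding that $C$ is the \emph{full} stabilizer $G_{C\cdot X}$ (otherwise a single orbit $[F]$ would be counted under every finite subgroup $C\subseteq G_F$, for instance already under $C=\{e\}$), and that the inner $K$-theory terms match under the relabelling. This is precisely the orbit-counting carried out in \cite[Example 3.17]{CEL}, which I would invoke rather than reproduce in full.
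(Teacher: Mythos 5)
Your proposal is correct and takes essentially the same route as the paper: both choose $A=\bC^{n+1}$, $B=C(\{1,\dotsc,n\})$ with $\phi$ the canonical (last-$n$-coordinates) inclusion, obtain the first formula from Theorem \ref{thm-replace-by-JB-A0} (your detour through Corollary \ref{cor-UCT} plus the explicit Green-imprimitivity decomposition of $C(\{1,\dotsc,n\}^F)\rtimes_r G_F$ is just that theorem unwound), and defer the $Z=G$ orbit bookkeeping to the cited references. Your remark that $F(C)$ must be understood as consisting of those $X$ with $G_{C\cdot X}=C$ exactly is apt and worth keeping: read literally, summing over \emph{all} nonempty finite $X\subseteq C\backslash G$ would overcount whenever $G$ has torsion (already for $G=\Z/2$ and $n=1$), and this full-stabilizer convention is precisely what the orbit analysis in \cite{XinLi} and \cite{CEL} implements.
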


\begin{proof}
	Let $A=C(\{0,\dotsc,n\})$ and $B=C(\{1,\dotsc,n\})$ and let $\phi\colon B\to A$ be the canonical inclusion. The first isomorphism follows from Theorem \ref{thm-replace-by-JB-A0}. The second isomorphism is obtained by analyzing the orbit structure of $\bigsqcup_{F\in \FIN(G)}\{1,\dotsc,n\}^F$ (see \cite[Proposition 2.4]{XinLi} for details). 
\end{proof}

\section{Finite-dimensional algebras}\label{sec-fd}
In this section we compute the $K$-theory of crossed products of the form $A^{\otimes Z}\rtimes_r G$ where $A=\bigoplus_{0\leq j \leq N}{M_{k_j}}$ is a finite-dimensional $C^*$-algebra, $Z$ is a countable $G$-set,  and where $G$ is a group satisfying BCC. This generalizes the case $k_0=1$ from \cite{XinLi}. We denote by $\gcd(k_0,\dotsc,k_N)$ the greatest common divisor of $k_0,\dotsc,k_N$.
We believe that the following theorem is known to experts. In lack of a reference, we give a detailed proof here.

\begin{theorem}\label{thm-coprime-KK}
	Let $k_0,\dotsc,k_N$ be positive integers with $\gcd(k_0,\dotsc,k_N)=1$. Then there is a unital $*$-homomorphism 
		\[\phi\colon \bC^{N+1}\to \bigoplus_{0\leq j \leq N}{M_{k_j}}\]
	that induces a $KK$-equivalence. Moreover when $N=1$, there are exactly two such $\ast$-homomorphisms up to unitary equivalence.
\end{theorem}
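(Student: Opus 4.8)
The plan is to translate the statement into a problem about non-negative integer matrices and then resolve it by an arithmetic (subtractive Euclidean) argument.

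First I would reduce to linear algebra. Both $\bC^{N+1}$ and $\bigoplus_{j} M_{k_j}$ are finite-dimensional, hence lie in the bootstrap class and satisfy the UCT, and both have $K_1=0$ and $K_0\cong\Z^{N+1}$, generated by the classes $[e_i]$ of the minimal projections of the summands of the source, respectively by the classes $[f_j]$ of rank-one projections in the target. A unital $*$-homomorphism $\phi\colon \bC^{N+1}\to\bigoplus_j M_{k_j}$ is determined up to unitary equivalence by its multiplicity matrix $M=(m_{ji})$, where $m_{ji}$ is the rank of the $M_{k_j}$-component of $\phi(e_i)$; conversely every non-negative integer matrix whose $j$-th row sums to $k_j$ (this is exactly unitality, $\sum_i \phi(e_i)=1$) is realized by such a $\phi$, uniquely up to unitary equivalence. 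On $K_0$ the map $\phi_*$ is given by $M$, and by the UCT, with $K_1=0$ killing the $\mathrm{Ext}$-terms, $[\phi]$ is a $KK$-equivalence iff $\phi_*$ is an isomorphism, i.e. iff $\det M=\pm1$. So the task becomes: construct a non-negative matrix $M\in\GL(N+1,\Z)$ whose $j$-th row sum equals $k_j$.

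For existence I would first observe that the gcd hypothesis is forced, since a unimodular $M$ sends the primitive vector $(1,\dots,1)^{\top}$ to a primitive vector whose entries are the row sums. For sufficiency I would run a subtractive Euclidean algorithm on $k=(k_0,\dots,k_N)$: as long as the entries are not all equal, replacing the largest entry $k_i$ by $k_i-k_j$ for the smallest entry $k_j$ keeps every entry $\geq1$, strictly decreases the sum, and preserves the gcd; since the gcd is $1$, the only all-equal vector reachable is $(1,\dots,1)$, so the process terminates there. Reading this reduction backwards, I build $M$ starting from the identity matrix (row sums all $1$, determinant $1$) and performing the reverse moves, each being the elementary row operation that adds row $j$ to row $i$, i.e. left multiplication by $I+e_{ij}$. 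These operations preserve the determinant and, crucially, non-negativity, since we only ever add a non-negative row to another, and at the end the row sums are exactly $k$. This yields $\det M=1$; a single column swap flips the sign if $\det M=-1$ is desired instead. The main obstacle here is precisely the non-negativity constraint: realizing a prescribed primitive row-sum vector by a unimodular matrix is trivial without it, and the subtractive Euclidean algorithm is exactly what keeps all intermediate matrices non-negative.

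Finally, for the uniqueness statement when $N=1$, I would count admissible matrices directly. Writing $M=\begin{pmatrix} a & k_0-a \\ c & k_1-c\end{pmatrix}$ with $0\le a\le k_0$ and $0\le c\le k_1$, one computes $\det M=ak_1-ck_0$. Since $\gcd(k_0,k_1)=1$, the solutions of $ak_1-ck_0=1$ form a single coset of $\Z\cdot(k_0,k_1)$, so any two differ by $(k_0,k_1)$; this shows the closed box $[0,k_0]\times[0,k_1]$ contains at most one solution (two would force $a=0$, whence $-ck_0=1$, impossible), while translating a Bézout solution into the box shows there is exactly one. The same argument applies to $\det M=-1$. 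Hence there are exactly two admissible multiplicity matrices, and therefore exactly two unital $KK$-equivalences $\bC^2\to M_{k_0}\oplus M_{k_1}$ up to unitary equivalence, and they are column swaps of one another.
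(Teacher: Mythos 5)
Your proof is correct and follows the paper's overall strategy — classify unital $*$-homomorphisms by multiplicity matrices, use the UCT (with $K_1=0$) to turn ``$KK$-equivalence'' into ``$\det=\pm1$'', and then solve the resulting arithmetic problem by a subtractive Euclidean algorithm — but your execution of the arithmetic genuinely differs in two places. For existence, the paper first proves a $2\times 2$ statement (Proposition \ref{prop-sl2z}: every pair $(k_1,k_2)$ is reached from $(\gcd,\gcd)^{\top}$ by a unique non-negative matrix in $\SL(2,\Z)$) and then obtains general $N$ by induction in Corollary \ref{cor-slnz}, inserting $2\times 2$ blocks and factoring through $\gcd(k_0,\dotsc,k_{N-1})$; you instead run the subtractive algorithm on the whole vector $(k_0,\dotsc,k_N)$ at once and realize the reverse moves as a product of the non-negative elementary matrices $I+e_{ij}$, which eliminates the induction and the auxiliary block matrices entirely. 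For uniqueness when $N=1$, the paper computes the stabilizer of $(1,1)^{\top}$ in $\SL(2,\Z)$ and uses non-negativity to force the stabilizing element to be trivial, while you count lattice points: the solutions of $ak_1-ck_0=\pm 1$ form cosets of $\Z\cdot(k_0,k_1)$, and the box $[0,k_0]\times[0,k_1]$ meets each coset exactly once. Both arguments are correct and of comparable length; what the paper's route buys is the sharper $2\times 2$ proposition for arbitrary gcd $n$ (from which it also extracts the remark that the non-negative matrices in $\SL(2,\Z)$ form a free monoid on the two elementary generators), while your version proves exactly what the theorem needs with less machinery.
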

The main ingredient for the proof is the following arithmetic fact: 
\begin{proposition} \label{prop-sl2z} For any pair of positive integers $k_1,k_2\in \N$, there is a unique matrix $X$ in $\SL(2, \Z)$ with non-negative entries such that
\[
X \begin{bmatrix} n \\ n \end{bmatrix} = \begin{bmatrix} k_1 \\ k_2 \end{bmatrix} 
\]
where $n=\gcd(k_1,k_2)$. 
If we allow $X$ to be in $\GL(2, \Z)$, then there are exactly two such $X$: the one $X_0$ in $\SL(2, \Z)$ and $X_0 \begin{bmatrix}  0 & 1 \\ 1 & 0 \end{bmatrix}$.
\end{proposition}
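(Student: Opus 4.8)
The plan is to reduce the matrix problem to a single linear Diophantine equation and solve it by elementary number theory. Write $X=\begin{bmatrix} a & b \\ c & d\end{bmatrix}$ with integer entries $a,b,c,d\ge 0$. The equation $X\begin{bmatrix} n \\ n\end{bmatrix}=\begin{bmatrix} k_1 \\ k_2\end{bmatrix}$ says exactly that the row sums satisfy $a+b=k_1/n=:m_1$ and $c+d=k_2/n=:m_2$, where $m_1,m_2$ are positive integers with $\gcd(m_1,m_2)=1$. Substituting $b=m_1-a$ and $d=m_2-c$ collapses the determinant to the linear expression $ad-bc=am_2-cm_1$. Hence the $\SL$-condition $\det X=1$ together with non-negativity becomes the problem of finding $(a,c)\in\Z^2$ with $0\le a\le m_1$, $0\le c\le m_2$, and $am_2-cm_1=1$.

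The core of the argument is then the following purely arithmetic claim: for coprime positive integers $m_1,m_2$ there is a unique such pair $(a,c)$. For existence I would invoke B\'ezout to get some integer solution of $am_2-cm_1=1$ and then translate $a$ by a multiple of $m_1$ so that $a$ lies in $\{1,\dots,m_1\}$; one then checks that the forced value $c=(am_2-1)/m_1$ automatically lands in $[0,m_2]$, using $a\ge 1$ to get $c\ge 0$ and $a\le m_1$ to get $c<m_2$. For uniqueness, subtracting two solutions gives $(a-a')m_2=(c-c')m_1$, so coprimality forces $m_1\mid(a-a')$; since $a,a'\in[0,m_1]$ this leaves only $a-a'\in\{0,\pm m_1\}$, and the two extreme cases are excluded because they force $\det X=0$ rather than $1$. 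This produces the unique $X_0\in\SL(2,\Z)$.

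Finally, for the $\GL$-statement the determinant is $\pm1$, i.e. $am_2-cm_1=\pm1$. The value $+1$ recovers $X_0$, and the value $-1$, i.e. $cm_1-am_2=1$, is solved uniquely by the same lemma applied to the swapped pair $(m_2,m_1)$. To identify this second solution I would note that right multiplication by $\begin{bmatrix} 0 & 1 \\ 1 & 0\end{bmatrix}$ swaps the columns of $X_0$, preserving non-negativity and the row sums $m_1,m_2$ while changing the determinant from $1$ to $-1$; by the uniqueness just established it must therefore be the $\det=-1$ solution, giving exactly the two matrices $X_0$ and $X_0\begin{bmatrix} 0 & 1 \\ 1 & 0\end{bmatrix}$. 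I expect the only real subtlety to be the boundary bookkeeping in the uniqueness step: the admissible range $[0,m_1]$ for $a$ is a closed interval containing $m_1+1$ integers whose endpoints are congruent modulo $m_1$, so one must carefully rule out the endpoint pair $(a,c)=(0,0)$ against $(m_1,m_2)$ (both giving determinant $0$), and one must take the residue representative in $\{1,\dots,m_1\}$ rather than $\{0,\dots,m_1-1\}$ to handle the edge case $m_1=1$.
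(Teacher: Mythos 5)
Your proof is correct, but it takes a genuinely different route from the paper's. You linearize the problem: writing $m_i=k_i/n$, the matrix equation plus non-negativity becomes the search for a lattice point $(a,c)$ in the box $0\le a\le m_1$, $0\le c\le m_2$ on the line $am_2-cm_1=1$, which you solve by B\'ezout together with a careful choice of residue representative, and whose uniqueness you get from coprimality plus the box bounds; your boundary bookkeeping (excluding the corner pairs $(0,0)$ and $(m_1,m_2)$, and taking $a\in\{1,\dots,m_1\}$ to cover $m_1=1$) is exactly right. The paper instead proves existence by running the Euclidean algorithm on $(k_1,k_2)$ and recording its steps as a word in the elementary matrices $A=\begin{bmatrix}1&1\\0&1\end{bmatrix}$ and $B=\begin{bmatrix}1&0\\1&1\end{bmatrix}$, producing $X$ as a product $A^{a_1}B^{b_1}\cdots A^{a_l}B^{b_l}$; uniqueness is obtained by computing the stabilizer of $(1,1)^T$ in $\SL(2,\Z)$ (a one-parameter family of matrices) and showing that non-negativity of entries forces the parameter to equal $1$; the $\GL(2,\Z)$ assertion is then declared immediate. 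The trade-off is this: the paper's word decomposition yields as a by-product the remark following the proposition, namely that the non-negative matrices in $\SL(2,\Z)$ form the free monoid on $A$ and $B$, which your argument does not recover; conversely, your argument is more elementary and self-contained, makes $X_0$ explicit in terms of B\'ezout coefficients, and treats the $\GL(2,\Z)$ case (via the column-swap bijection between determinant $+1$ and determinant $-1$ solutions) in full detail where the paper only gestures.
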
 

\begin{proof}
Existence: Let $f\colon\mathbb{N}_+^2\to \mathbb{N}_+^2$ be given by
\[
 \begin{bmatrix} k_1\\k_2\end{bmatrix} 
 	\mapsto 
 		\begin{cases}
 			\begin{bmatrix}k_1-k_2\\ k_2\end{bmatrix}, & \text { if } k_1 > k_2 \\[3ex]
 		\begin{bmatrix}k_1\\ k_2-k_1\end{bmatrix}, & \text { if } k_1< k_2 \\[3ex]
 		\begin{bmatrix}k_1\\ k_2\end{bmatrix}, & \text{ if } k_1=k_2.							\end{cases}
\]
The Euclidian algorithm precisely says that there exists a $k \in \mathbb{N}$ such that $f^k\begin{bmatrix}k_1\\ k_2\end{bmatrix}=\begin{bmatrix}n\\ n\end{bmatrix}.$ Let $A=\begin{bmatrix}1 & 1 \\ 0 & 1\end{bmatrix}$ and $B=\begin{bmatrix}1 & 0 \\ 1 & 1\end{bmatrix}.$ Then the map $f$ defined above is given by
\[
\begin{bmatrix}
k_1 \\
k_2
\end{bmatrix} \mapsto \begin{cases}A^{-1} \begin{bmatrix}
k_1 \\
k_2
\end{bmatrix}, & \text { if } k_1 > k_2 \\[3ex]
B^{-1} \begin{bmatrix}
k_1 \\
k_2
\end{bmatrix}, & \text { if } k_1< k_2 \\[3ex]
\begin{bmatrix}
k_1\\
k_2
\end{bmatrix}, & \text { if } k_1=k_2.
\end{cases}
\]
Now the above formulation of the Euclidean algorithm gives us integers $a_1,a_2,\cdots,a_l, b_1,b_2,\cdots,b_l\geq 1$
such that 
\[A^{a_1} B^{b_1} A^{a_2} B^{b_2} \cdots A^{a_l} B^{b_l}\begin{bmatrix}
n \\
n
\end{bmatrix}=\begin{bmatrix}
k_1 \\
k_2
\end{bmatrix}.\]
Then $X\coloneqq ¸A^{a_1} B^{b_1} A^{a_2} B^{b_2} \cdots A^{a_l} B^{b_l}\in \mathrm{SL}(2,\Z)$ is the required matrix.\\

Uniqueness: Suppose there are two such matrices $X_1$ and $X_2$. Then, $Y=X_2^{-1}X_1\in \SL(2, \Z)$ satisfies 
\[
Y \begin{bmatrix} 1 \\ 1 \end{bmatrix} = \begin{bmatrix} 1 \\ 1 \end{bmatrix}. 
\]
From this, we have
\[
Y= \begin{bmatrix} a & 1-a \\ a-1 &  2-a \end{bmatrix}
\]
for $a\in \Z$. Now, we get
\[
X_1= X_2 \begin{bmatrix} a & 1-a \\ a-1 &  2-a \end{bmatrix},
\]
but since both $X_1$ and $X_2$ have non-negative entries and since they are non-singular, it is not hard to see that $a$ must be $1$. Thus, $X_1=X_2$. The last assertion is immediate.
\end{proof}

\begin{remark} It follows from the proof that the subsemigroup in $\SL(2, \Z)$ consisting of matrices with non-negative entries is the free monoid of two generators $A$ and $B$.
\end{remark}

\begin{corollary}\label{cor-slnz}  For any positive integers $k_0, \ldots, k_N$ with $\gcd(k_0, \ldots, k_N)=n$, there is a matrix $X$ in $\SL(N+1, \Z)$ with non-negative entries such that
\[
X \begin{bmatrix} n \\  \vdots \\ n \end{bmatrix} = \begin{bmatrix} k_0 \\ \vdots \\ k_N \end{bmatrix}. 
\]
\end{corollary}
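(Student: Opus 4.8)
The plan is to realize the desired $X$ as a product of elementary matrices with non-negative entries by running a vector version of the Euclidean algorithm ``in reverse.'' For $i\neq j$ let $E_{ij}=I+e_{ij}\in\SL(N+1,\Z)$, where $e_{ij}$ is the matrix unit; this matrix has non-negative entries, and hence so does any product of such matrices, while every such product lies in $\SL(N+1,\Z)$. Its inverse is $E_{ij}^{-1}=I-e_{ij}$, and acting on a column vector $w$ it performs the subtraction $w_i\mapsto w_i-w_j$ while fixing every other coordinate. The key observation is that if I can reduce $(k_0,\dots,k_N)^{T}$ to $(n,\dots,n)^{T}$ by a sequence of such subtractions, recording the product $M$ of the corresponding matrices $E_{ij}^{-1}$, then $X:=M^{-1}$ is automatically a product of the non-negative matrices $E_{ij}$ and satisfies $X(n,\dots,n)^{T}=(k_0,\dots,k_N)^{T}$, which is exactly what is required.

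Second, I would make the reduction precise as an algorithm: starting from $w=(k_0,\dots,k_N)^{T}$, as long as the entries of $w$ are not all equal, choose indices with $w_i>w_j$ and replace $w_i$ by $w_i-w_j$. I would then verify three facts. First, every step keeps the entries positive integers, since $w_i-w_j\geq 1$. Second, the sum $\sum_i w_i$ is a positive integer that strictly decreases at each step, so the algorithm terminates. Third, the $\gcd$ of the entries is invariant under the operation $w_i\mapsto w_i-w_j$ (any common divisor of all entries divides the new entry, and conversely), so it stays equal to $n$ throughout. When the algorithm halts, no pair satisfies $w_i>w_j$, which forces all entries to be equal; combined with the $\gcd$ being $n$, the terminal vector is exactly $(n,\dots,n)^{T}$.

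Finally, collecting the elementary steps yields $M=E_{i_rj_r}^{-1}\cdots E_{i_1j_1}^{-1}\in\SL(N+1,\Z)$ with $M(k_0,\dots,k_N)^{T}=(n,\dots,n)^{T}$, and then $X=M^{-1}=E_{i_1j_1}\cdots E_{i_rj_r}$ is the desired non-negative matrix in $\SL(N+1,\Z)$. I expect the main (minor) obstacle to be purely bookkeeping: checking that the ``stuck'' state of the algorithm is precisely the all-equal state, and that inverting a product of subtraction matrices $E_{ij}^{-1}$ returns a product of the manifestly non-negative matrices $E_{ij}$. Once this is in place, neither the $\SL$ condition nor the positivity of the final matrix needs separate attention, since both are built into the elementary matrices, and the intermediate vectors play no role beyond guaranteeing termination. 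An alternative route is induction on $N$ using Proposition \ref{prop-sl2z} to treat one coordinate at a time, but this requires first rescaling to the coprime case $n=1$ and some care in spreading the partial gcd across the remaining coordinates, so I would favour the self-contained subtractive argument above.
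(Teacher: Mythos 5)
Your proof is correct, and it takes a genuinely different route from the paper. The paper first proves the $2\times 2$ case (Proposition \ref{prop-sl2z}) via the Euclidean algorithm with the matrices $A=\begin{bmatrix}1&1\\0&1\end{bmatrix}$ and $B=\begin{bmatrix}1&0\\1&1\end{bmatrix}$ — which are exactly your $E_{12}$ and $E_{21}$ — and then deduces the corollary by induction on $N$: setting $l=\gcd(k_0,\dots,k_{N-1})$, it combines an inductively obtained $\tilde X\in\SL(N,\Z)$ with a chain of block-diagonal matrices built from $2\times 2$ solutions to spread $n$ into $(l,\dots,l,k_N)$, precisely the "alternative route" you mention and set aside. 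Your argument instead runs the subtractive Euclidean algorithm directly in $N+1$ dimensions with the elementary matrices $E_{ij}=I+e_{ij}$, and all the verifications you list (positivity and strict decrease of the sum, hence termination; invariance of the gcd; the stuck state being the all-equal state; non-negativity and the $\SL$ condition being built into the generators $E_{ij}$) are sound, including the boundary cases $N=0$ and all $k_i$ equal. What each approach buys: yours is self-contained, avoids the block-matrix bookkeeping entirely, and makes no use of Proposition \ref{prop-sl2z}; the paper's route reuses the $2\times 2$ proposition, which it needs anyway because that proposition's \emph{uniqueness} statement (exactly two matrices in $\GL(2,\Z)$) is what yields the "exactly two $\ast$-homomorphisms" claim in Theorem \ref{thm-coprime-KK} for $N=1$ — a refinement your method does not address, but which the corollary itself does not assert.
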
 
\begin{proof} We give the proof by induction on $N$. The case $N=0$ is clear. Let $N\geq 1$ and let
$k_0,\ldots, k_N\geq 1$ be positive integers with $\gcd(k_0,\ldots, k_N)=n$. Set $l\coloneqq\gcd(k_0,\ldots, k_{N-1})$. By induction, 
we may assume that there exists a matrix $\tilde{X}\in \SL(N,\Z)$ with non-negative entries such that 
${\tilde{X}\begin{bmatrix} l\\ \vdots \\ l\end{bmatrix}=\begin{bmatrix} k_0\\ \vdots \\ k_{N-1}\end{bmatrix}}$. 
Since $\gcd(l, k_N)=\gcd(l,n)=n$, it follows from Proposition \ref{prop-sl2z} that there are matrices $X_0, X_{N}\in \SL(2,\Z)$ with non-negative entries such that\linebreak   ${X_0\begin{bmatrix} n\\n\end{bmatrix}=\begin{bmatrix} l\\n\end{bmatrix}}$
and $X_{N}\begin{bmatrix} n\\n\end{bmatrix}=\begin{bmatrix} l\\k_N\end{bmatrix}$. 
Now for $i\in \{1,\ldots, N-1\}$,
let $Y_i$ denote the matrix with $X_0$ as the $(i, i+1)$-th diagonal block, with ones in all other diagonal entries and with zeros elsewhere, and let $Y_{N}=\begin{bmatrix} I_{N-1} &0\\ 0& X_N\end{bmatrix}$. Write $Y\coloneqq\begin{bmatrix} \tilde{X}&0\\0&1\end{bmatrix}$. 
Then $X\coloneqq Y\cdot Y_N\cdots Y_1\in SL(N+1,\Z)$ has non-negative entries and 
\[ X\begin{bmatrix} n\\ \vdots \\n\end{bmatrix}=Y\left(Y_N\cdots Y_1\begin{bmatrix} n\\ \vdots \\n\end{bmatrix}\right)=Y\begin{bmatrix} l\\ \vdots \\l \\ k_N\end{bmatrix}=
\begin{bmatrix} k_0\\ \vdots\\ k_N\end{bmatrix}\]
as desired. 
\end{proof}

\begin{proof}[Proof of Theorem \ref{thm-coprime-KK}]
 Unital $\ast$-homomorphisms from $\bC^{N+1}$ to $\bigoplus_{0\leq j \leq N}{M_{k_j}}$ are classified up to unitary equivalence by their induced maps on ordered $K_0$-groups with units. 
 If we identify the $K_0$-groups of $\bC^{N+1}$ and $\bigoplus_{0\leq j \leq N}{M_{k_j}}$ with $\Z^{N+1}$, we may represent the induced maps on $K_0$-groups by 
 $(N+1)$-square matrices with non-negative integer entries that send 
 $\begin{bmatrix} 1 \\ \vdots \\ 1 \end{bmatrix}$ to $\begin{bmatrix} k_0 \\ \vdots \\ k_N \end{bmatrix}$. Such a matrix is an isomorphism on $K$-theory if and only if it is in  $\GL(N+1, \Z)$. All the assertions now follow from Proposition \ref{prop-sl2z} and Corollary \ref{cor-slnz}.
\end{proof}

We are now ready to prove Theorem \ref{introthm-finite-dimensional}. For the proof, we need a result from \cite{KN}. We formulate it in full generality here since this will be needed in later sections of the paper. 
Recall that a \emph{supernatural number} is a formal product $\mathfrak n=\prod_p p^{n_p}$ of prime powers with $n_p\in \{0,1,\dotsc,\infty\}$. We denote by $M_\mathfrak n\coloneqq \otimes_p M_p^{\otimes n_p}$ the associated \emph{UHF-algebra}. $M_\mathfrak n$ and $\mathfrak n$ are called \emph{of infinite type} if $n_p\in \{0,\infty\}$ for all $p$ and $n_p\neq 0$ for at least one $p$. 
For an abelian group $L$, we denote by 
	\[L[1/\mathfrak n]\coloneqq \colim(L\xrightarrow{\cdot p_1}L\xrightarrow{\cdot p_2}\cdots)\]
the \emph{localization at $\mathfrak n$} where $(p_1,p_2,\dotsc)$ is a sequence of primes containing every $p$ with $n_p\geq 1$ infinitely many times. In other words, $L[1/\mathfrak n]$ is the localization at the set of all primes dividing $\mathfrak n$. If $\mathfrak n$ is finite, this definition recovers the usual localization at a natural number. 

\begin{theorem}[{\cite[Corollary 2.11]{KN}}]\label{thm-localize}
Let $G$ be a discrete group satisfying BCC, let $Z$ a countable $G$-set, let $A$ a $G$-$C^*$-algebra and let $M_\mathfrak n$ a UHF-algebra. Assume that $Z$ is infinite or that $\mathfrak n$ is of infinite type. Then the inclusion $A\to A\otimes M_\mathfrak n^{\otimes Z}$ induces an isomorphism
\[
K_*(A \rtimes_rG)[1/\mathfrak n]\cong K_*\left(\left(A \otimes M_\mathfrak n^{\otimes Z}\right)\rtimes_r G\right).
\]
In particular, the right-hand side is a $\Z[1/\mathfrak n]$-module. 
\end{theorem}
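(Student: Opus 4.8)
\emph{Proof proposal.} The plan is to reduce the statement to finite subgroups by a localized form of the Going--Down principle and then to compute directly, using that the Bernoulli shift acts on each finite tensor power by \emph{inner} permutation unitaries. Throughout, write $M_\mathfrak n=\filcolim_k M_{d_k}$ with $d_k\mid d_{k+1}$ and unital connecting maps $x\mapsto x\otimes 1$. First I would record the localized Going--Down principle: under BCC, a $G$-equivariant morphism that induces an isomorphism on $K_*(-\rtimes_r H)[1/\mathfrak n]$ for every finite subgroup $H\leq G$ also induces an isomorphism on $K_*(-\rtimes_r G)[1/\mathfrak n]$. This follows from Theorem~\ref{thm-goingdown} and its proof, since Baum--Connes expresses $K_*(-\rtimes_r G)$ as a colimit of the finite-subgroup contributions $K_*(-\rtimes_r H)$ and $-\otimes_\Z\Z[1/\mathfrak n]$ is exact and commutes with this colimit. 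Applying this to the inclusion $A\to A\otimes M_\mathfrak n^{\otimes Z}$, it suffices to prove for each finite $H$ that the inclusion induces an isomorphism $K_*(A\rtimes_r H)[1/\mathfrak n]\cong K_*\left((A\otimes M_\mathfrak n^{\otimes Z})\rtimes_r H\right)$ with $\Z[1/\mathfrak n]$-module target; the module statement for $G$ is then inherited, being a colimit of $\Z[1/\mathfrak n]$-modules.

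Fix a finite $H$. Since $H$ is finite, the $H$-invariant finite subsets $F\subseteq Z$ are cofinal among all finite subsets, so $A\otimes M_\mathfrak n^{\otimes Z}$ is the filtered colimit of the $H$-algebras $A\otimes M_\mathfrak n^{\otimes F}$, each of which is in turn the filtered colimit of the $A\otimes M_{d_k}^{\otimes F}=A\otimes\mathcal K(V_k)$, where $V_k=(\bC^{d_k})^{\otimes F}$ and $H$ permutes the factors indexed by $F$. As $K_*(-\rtimes_r H)$ commutes with filtered colimits, $K_*\left((A\otimes M_\mathfrak n^{\otimes Z})\rtimes_r H\right)$ is a filtered colimit of copies of $K_*\left((A\otimes\mathcal K(V_k))\rtimes_r H\right)$. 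The crucial point is that $H$ acts on $\mathcal K(V_k)$ by $\mathrm{Ad}$ of the \emph{honest} permutation representation $u_k\colon H\to U(V_k)$; hence the diagonal action is exterior equivalent to the action that is trivial on $\mathcal K(V_k)$, and an equivariant Morita equivalence yields a natural identification $K_*\left((A\otimes\mathcal K(V_k))\rtimes_r H\right)\cong K_*(A\rtimes_r H)$. A bookkeeping of the untwisting unitaries shows that every connecting map $\mathcal K(V_k)\hookrightarrow\mathcal K(V_{k+1})$, $T\mapsto T\otimes 1$, as well as the maps adding new factors when $F$ grows, becomes under these identifications multiplication by the class of a permutation representation $W$ in $R(H)=K_0(C^*_r(H))$, acting on $K_*(A\rtimes_r H)$ through its $R(H)$-module structure.

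It then remains to identify the colimit
\[
\colim\bigl(K_*(A\rtimes_r H)\xrightarrow{\cdot[W]}K_*(A\rtimes_r H)\xrightarrow{\cdot[W]}\cdots\bigr),
\]
i.e.\ the localization of the $R(H)$-module $K_*(A\rtimes_r H)$ at the classes $[W]$. The key representation-theoretic input, which is exactly the content of the trivialization theorem of \cite{KN}, is that inverting these classes has the same effect on every $R(H)$-module as inverting the integer $\mathfrak n$. Concretely, the character values of such a $W$ are $\mathrm{tr}_W(h)=e^{c(h)}$, where $e\mid\mathfrak n$ and $c(h)$ is the number of cycles of $h$ on $F$; these are powers of divisors of $\mathfrak n$, so each $[W]$ is a unit in $R(H)[1/\mathfrak n]$, while a Brauer-character computation shows each $[W]$ is nilpotent modulo the primes it introduces, so that inverting the $[W]$ inverts precisely the primes dividing $\mathfrak n$. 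Here the hypothesis that $Z$ is infinite or $\mathfrak n$ is of infinite type is used to guarantee that every prime dividing $\mathfrak n$ occurs cofinally among these multiplications. Since localization is flat, the colimit computes $K_*(A\rtimes_r H)\otimes_\Z\Z[1/\mathfrak n]=K_*(A\rtimes_r H)[1/\mathfrak n]$, independent of $F$ and manifestly a $\Z[1/\mathfrak n]$-module. Feeding this back through the localized Going--Down principle finishes the proof; the trivial case $H=\{1\}$, $F=\{*\}$ recovers the elementary identity $K_*(B\otimes M_\mathfrak n)\cong K_*(B)[1/\mathfrak n]$ that makes the whole mechanism plausible.

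The main obstacle is the representation-ring statement of the third paragraph: controlling the permutation-representation classes $[W]\in R(H)$ uniformly over all finite $H$ and all $F$, and proving that inverting them is equivalent to inverting the integer $\mathfrak n$, is precisely the delicate trivialization result of \cite{KN}. By comparison, the localized Going--Down reduction and the untwisting of the inner permutation action are routine, and the role of the hypothesis ``$Z$ infinite or $\mathfrak n$ of infinite type'' is merely to ensure that the localizing colimit runs through infinitely many steps for each relevant prime.
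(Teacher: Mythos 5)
The paper offers no internal proof of Theorem \ref{thm-localize} to compare against: the statement is imported wholesale from the companion paper \cite[Corollary 2.11]{KN}. Your sketch correctly reconstructs the architecture of the proof given there --- a localized Going--Down reduction to finite subgroups, untwisting the inner permutation action to identify $K_*\left(\left(A\otimes \mathcal{K}(V_k)\right)\rtimes_r H\right)$ with $K_*(A\rtimes_r H)$, and identification of the resulting colimit as a localization of $R(H)$-modules --- and you rightly isolate the representation-ring statement (that inverting the permutation classes $[W]$ amounts to inverting $\mathfrak n$) as the genuinely hard input, which is exactly the trivialization theorem of \cite{KN}; so your proposal is as complete as one can be without reproving that companion result. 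Two of your justifications are looser than the claims they support: Baum--Connes does \emph{not} express $K_*(-\rtimes_r G)$ as an ordinary colimit of the finite-subgroup groups (the paper itself warns, in the remark following \eqref{eq-0dim-approx}, that the ordinary-colimit map is in general neither injective nor surjective), although the localized Going--Down principle is nevertheless valid because $-\otimes_\Z\Z[1/\mathfrak n]$ is exact and commutes with the filtered colimits and long exact sequences appearing in the proof of Theorem \ref{thm-goingdown}; and invertibility of $[W]$ in $R(H)[1/\mathfrak n]$ does not follow from invertibility of its character values alone but requires Segal's description of the prime ideals of $R(H)$, which, combined with your nilpotence-mod-$p$ observation, is precisely the kind of argument \cite{KN} carries out.
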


\begin{theorem}[Theorem \ref{introthm-finite-dimensional}] \label{thm-finitedim-shift}
Let $G$ be a discrete group satisfying BCC. Let 
$Z$ be a countably infinite $G$-set
and let $A=\bigoplus_{0\leq j \leq N}{M_{k_j}}$ where $k_0, \ldots, k_N$ ($N\geq1$) are positive integers with 
$\gcd(k_0, \ldots, k_N)=n$. Then 
\begin{align*}
K_\ast\left(A^{\otimes Z}  \rtimes_rG\right) & \cong  K_\ast\left(C\left(\{0, \ldots, N \}^Z \right) \rtimes_rG\right)[1/n] \\
& \cong  \bigoplus_{[F]\in G\backslash \FIN(Z)} \bigoplus_{[S] \in G_F\backslash (\{1, \ldots, N\}^F) } K_\ast(C^*_r(G_S))[1/n]. 
\end{align*}
\end{theorem}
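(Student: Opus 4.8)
The plan is to follow the two-step strategy outlined in the introduction: first dispose of the coprime case $n=1$ using the arithmetic $KK$-equivalence of Theorem \ref{thm-coprime-KK}, and then reduce the general case to the coprime case by factoring off a matrix algebra and invoking the localization result of Theorem \ref{thm-localize}.

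First I would treat the case $\gcd(k_0,\dots,k_N)=1$. By Theorem \ref{thm-coprime-KK} there is a unital $*$-homomorphism $\phi\colon \bC^{N+1}\to A$ that is a $KK$-equivalence. Since $\phi$ is unital and a $KK$-equivalence, Corollary \ref{cor-weak-equivalence} shows that $\phi^{\otimes Z}\colon (\bC^{N+1})^{\otimes Z}\to A^{\otimes Z}$ is a weak $K$-equivalence in $KK^G$, and, because $G$ satisfies BCC, it induces an isomorphism $K_*((\bC^{N+1})^{\otimes Z}\rtimes_r G)\cong K_*(A^{\otimes Z}\rtimes_r G)$. Identifying $\bC^{N+1}=C(\{0,\dots,N\})$ yields a $G$-equivariant isomorphism of Bernoulli shifts $(\bC^{N+1})^{\otimes Z}\cong C(\{0,\dots,N\}^Z)$, so Corollary \ref{cor-Cantor} (with its label count taken to be our $N$) delivers the claimed double direct-sum formula, the localization $[1/n]$ being trivial in this case.

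For the general case write $k_j=n\,m_j$ with $\gcd(m_0,\dots,m_N)=1$ and set $A'\coloneqq\bigoplus_{0\le j\le N}M_{m_j}$, so that $A\cong M_n\otimes A'$. Regrouping the $M_n$- and $A'$-legs factor by factor gives a $G$-equivariant isomorphism of Bernoulli shifts
\[
A^{\otimes Z}\cong (M_n\otimes A')^{\otimes Z}\cong M_n^{\otimes Z}\otimes (A')^{\otimes Z}.
\]
Since $Z$ is infinite, Theorem \ref{thm-localize}, applied with the UHF-algebra $M_n$ and the $G$-$C^*$-algebra $(A')^{\otimes Z}$, then gives
\[
K_*(A^{\otimes Z}\rtimes_r G)\cong K_*\left(\left((A')^{\otimes Z}\otimes M_n^{\otimes Z}\right)\rtimes_r G\right)\cong K_*\left((A')^{\otimes Z}\rtimes_r G\right)[1/n].
\]
Now $A'$ falls under the coprime case, so the previous paragraph computes $K_*((A')^{\otimes Z}\rtimes_r G)\cong K_*(C(\{0,\dots,N\}^Z)\rtimes_r G)$; localizing at $n$ and using that $[1/n]$ commutes with direct sums then yields both isomorphisms in the statement.

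The routine verifications are the $G$-equivariance of the factorization $A^{\otimes Z}\cong M_n^{\otimes Z}\otimes (A')^{\otimes Z}$ (immediate, since the Bernoulli shift only permutes the $Z$-indexed factors while the regrouping of legs is carried out site by site), the identification of $(\bC^{N+1})^{\otimes Z}$ with $C(\{0,\dots,N\}^Z)$ as $G$-algebras, and the exactness of localization with respect to direct sums. Given the results already available in the excerpt, I expect no essential obstacle in the assembly: the genuine mathematical weight is carried entirely by the arithmetic $KK$-equivalence of Theorem \ref{thm-coprime-KK} and by the trivialization/localization statement of Theorem \ref{thm-localize}. The one point demanding care is simply to match the hypothesis of Theorem \ref{thm-localize}, namely that \emph{$Z$ is infinite}, which is exactly the standing assumption here and is what allows the matrix algebra $M_n$ (a \emph{finite} supernatural number) to be absorbed at the cost of inverting $n$.
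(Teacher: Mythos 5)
Your proposal is correct and follows essentially the same route as the paper: the paper likewise writes $A\cong B\otimes M_n$ with $B=\bigoplus_j M_{k_j/n}$, uses Theorem \ref{thm-localize} (valid since $Z$ is infinite) to identify $K_*(A^{\otimes Z}\rtimes_r G)$ with $K_*(B^{\otimes Z}\rtimes_r G)[1/n]$, handles the coprime algebra $B$ via Theorem \ref{thm-coprime-KK} together with Corollary \ref{cor-weak-equivalence}, and concludes with Corollary \ref{cor-Cantor}. The only difference is the order of the two reduction steps, which is immaterial.
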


\begin{proof}
	Write $B=\bigoplus_{0\leq j \leq N}{M_{k_j/n}}$ so that $B$ satisfies the assumptions of Theorem \ref{thm-coprime-KK} and so that $A\cong B\otimes M_n$. By Theorem \ref{thm-localize}, the inclusion $B^{\otimes Z}\hookrightarrow A^{\otimes Z}$ induces an isomorphism 
		\[K_*\left(B^{\otimes Z}\rtimes_r G\right)[1/n]\cong K_*\left(A^{\otimes Z}\rtimes_r G\right).\]
	By Theorem \ref{thm-coprime-KK} and Corollary \ref{cor-weak-equivalence}, we furthermore have 
		\[K_*\left(B^{\otimes Z}\rtimes_r G\right)\cong K_\ast\left(C\left(\{0, \ldots, N \}^Z \right) \rtimes_rG\right).\]
	This proves the isomorphism in the first line of the theorem. The isomorphism in the second line follows from Corollary \ref{cor-Cantor}. 
	\end{proof}

\section{More examples}\label{sec-examples}
In this section, we compute the $K$-theory of Bernoulli shifts in more examples.  We mostly restrict ourselves to the case $Z=G$ with the left translation action, but some of the results have straightforward generalizations to arbitrary countable $G$-sets. Recall that for $Z=G$ we write $\FIN=\FIN(G)$ for the collection of finite subsets of $G$ and we put $\cJ_B:=\cJ_{\bC,B}^G$  as in Definition \ref{defn-JB-A0} for any $C^*$-algebra $B$. 
We start with some easy applications of Corollary \ref{cor-weak-equivalence}.

\begin{example}\label{ex-Fibonacci-compact} 
	Let $\cA$ be the Fibonacci algebra \cite[Example III 2.6]{Davidson}, which is the filtered colimit of $(M_{m_k}\oplus M_{n_k})_{k\in \N}$ where $m_1=n_1=1$ and where the connecting maps are given by repeated use of the partial embedding matrix $\begin{bmatrix} 1 & 1 \\ 1 & 0 \end{bmatrix}$. Since this matrix belongs to $\GL(2,\Z)$, the unital embedding $\bC\oplus \bC\hookrightarrow \cA$ is a $KK$-equivalence. 
	
	Let $\mathcal{K}^+$ be the unitization of the algebra of compact operators on $\ell^2(\N)$ and let $p\in \mathcal K$ be a rank-$1$ projection. Then the unital embedding ${\bC\oplus \bC\hookrightarrow \mathcal K^+}$ given by $(\lambda,\mu)\mapsto \lambda(1-p)+\mu p$ is a $KK$-equivalence. 
	
	Now let $A$ be either $\cA$ or $\mathcal K^+$ and let $G$ be a discrete group satisfying BCC. Then by Corollaries \ref{cor-weak-equivalence} and \ref{cor-Cantor}, we have 
	\[
K_\ast\left(A^{\otimes G} \rtimes_rG\right) \cong K_\ast(C^*_r(G)) \oplus \bigoplus_{[F]\in G\backslash \FINx} K_\ast( C^*_r(G_F)).
\]
In particular, if $G$ is torsion free, we have
\[
K_\ast\left(A^{\otimes G} \rtimes_rG\right) \cong K_\ast(C^*_r(G)) \oplus \bigoplus_{[F]\in G\backslash \FINx} K_\ast(\bC).
\]	
\end{example}

\begin{example}[Theorem \ref{introthm-S1}]\label{ex-S1} 
Consider $A=C(S^1)$. 
Note that the canonical inclusion ${\phi\colon C_0(\R) \to C(S^1)}$ together with the unital inclusion 
 $\iota\colon \bC \to C(S^1)$ induces a $KK$-equivalence $\iota\oplus \phi \in KK(\bC\oplus C_0(\R), A)$. 
 By Theorem \ref{thm-replace-by-JB-A0}, we obtain a weak $K$-equivalence 
$
\Phi\colon \cJ_{C_0(\R)} \to A^{\otimes G}.
$
We can compute the $K$-theory of $\cJ_{C_0(\R)} \rtimes_rG$ as 
\[
K_\ast\left(\cJ_{C_0(\R)} \rtimes_rG\right) \cong K_*(C_r^*(G))\oplus \bigoplus_{[F]\in G\backslash \FINx}K_\ast\left(C_0(\R)^{\otimes F} \rtimes_rG_F\right).\]
In this expression, each nonempty finite subset $F\subseteq G$ can be written as $F=G_F\cdot L_F$ where $L_F$ is a complete set of representatives for $G_F\backslash F$. If the cardinality of $L_F$ is even, the $G_F$-action on $C_0(\R)^{\otimes F}=C_0(\R^{F})$ is $KK^{G_F}$-equivalent to the trivial action on $\bC$ by Kasparov's Bott-periodicity theorem 
 (see \cite[Theorem 20.3.2]{Blackadar}). 
 When the cardinality of $L_F$ is odd, then 
 \[C_0(\R)^{\otimes F}\cong 
 \left(C_0(\R)^{\otimes G_F}\right)^{\otimes |L_F|-1}\otimes C_0(\R)^{\otimes G_F}\]
 is $KK^{G_F}$-equivalent to $C_0(\R)^{\otimes G_F}=C_0\left(\R^{G_F}\right)$. Therefore, we have
\[
K_\ast\left(C_0(\R)^{\otimes F} \rtimes_rG_F\right) \cong  \begin{cases} K_\ast(C^*_r(G_F)), &  \text{if $|G_F\backslash F|$ is even} \\ K_\ast\left(C_0\left(\R^{G_F}\right) \rtimes_rG_F\right), & \text{if $|G_F\backslash F|$ is odd}\end{cases}.
\]
In general, for any finite group $H$ and for any orthogonal representation $H\to O(V)$ on a finite-dimensional Euclidean space $V$, the $K$-theory of $C_0(V)\rtimes_rH$ is the well-studied equivariant topological $K$-theory $K_H^*(V)$ of $V$ (see \cite{Karoubi}, \cite{EP}).
It is known to be a finitely generated free abelian group with $\mathrm{rank}_\Z K_H^\ast(V)$ equal to the number of conjugacy classes $\s{g}$ of $H$ which are oriented and even/odd respectively (see \cite[Theorem 1.8]{Karoubi}). Here, a conjugacy class $\s{g}$ of $H$ is oriented if the centralizer $C_g$ of $g$ acts on the $g$-fixed points $V^{g}$ of $V$ by oriented automorphisms. The class $\s{g}$ is even/odd if the dimension of $V^{g}$ is even/odd respectively. For example for any cyclic group $\Z/m\Z$, using \cite[Example 4.2]{EP}, we have 
\[
K^\ast_{\Z/m\Z}(\R^{m}) = \begin{cases} \Z^{m},  & \ast=1 \\ 0,  & \ast=0  \end{cases},
\]
for odd $m\geq1$, and  
\[
K^\ast_{\Z/m\Z}(\R^{m}) = \begin{cases} \Z^{m/2},  & \ast=1 \\ 0,  & \ast=0  \end{cases} 
\]
for even $m\geq2$. We summarize our discussion as follows:
\end{example}

\begin{theorem} Let $G$ be a discrete group satisfying BCC. We have
\begin{align*}
& K_\ast\left(C(S^1)^{\otimes G}\rtimes_rG\right) \\
 \cong & K_\ast(C^*_r(G)) \oplus \left( \bigoplus_{\underset{|G_F\backslash F| \,\, \mathrm{even}}{[F]\in G\backslash \FIN^\times,}}K_\ast(C^*_r(G_F)) \right) \oplus  \left(\bigoplus_{\underset{|G_F\backslash F| \,\, \mathrm{odd}}{[F]\in G\backslash \FIN^\times,}}K^\ast_{G_F}\left(\R^{G_F}\right) \right).
\end{align*}
\end{theorem}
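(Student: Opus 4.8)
The statement simply collects the computation carried out in Example~\ref{ex-S1}, so the plan is to run that computation cleanly in three stages.

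\emph{Stage 1: reduce to building blocks.} First I would apply Corollary~\ref{cor-UCT} with $A=C(S^1)$. This algebra is commutative and nuclear, hence satisfies the UCT, and the unital inclusion $\bC\hookrightarrow C(S^1)$ induces an isomorphism $K_0(\bC)=\Z\xrightarrow{\cong}K_0(C(S^1))=\Z$ sending the unit to the unit; in particular it is a split injection with cokernel $\tilde K_0=0$, while $\tilde K_1=K_1(C(S^1))=\Z$. Thus $\tilde K_*(C(S^1))\cong K_*(C_0(\R))$, and I take $B=C_0(\R)$, realized inside $C(S^1)$ by the canonical inclusion $\phi\colon C_0(\R)\to C(S^1)$, so that $\iota\oplus\phi$ is a $KK$-equivalence. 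Corollary~\ref{cor-UCT} (equivalently Theorem~\ref{thm-replace-by-JB-A0} with $A_0=\bC$) then yields
\[
K_*\left(C(S^1)^{\otimes G}\rtimes_r G\right)\cong \bigoplus_{[F]\in G\backslash\FIN}K_*\left(C_0(\R)^{\otimes F}\rtimes_r G_F\right),
\]
and I would split off the $F=\emptyset$ summand, which is $K_*(\bC\rtimes_r G)=K_*(C^*_r(G))$.

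\emph{Stage 2: equivariant Bott periodicity and parity.} The geometric heart is the observation that, because $G$ acts on itself freely by left translation, each stabilizer $G_F$ acts \emph{freely} on the finite set $F$: if $g\in G_F$ fixes some $x\in F$ then $gx=x$ forces $g=e$. Hence $F\cong G_F\times(G_F\backslash F)$ as $G_F$-sets and $G_F$ is finite, so as orthogonal $G_F$-representations $\R^F\cong(\R^{G_F})^{\oplus|G_F\backslash F|}$, giving a $G_F$-equivariant isomorphism $C_0(\R)^{\otimes F}\cong C_0(\R^{G_F})^{\otimes|G_F\backslash F|}$. I would then invoke Kasparov's equivariant Bott periodicity \cite[Theorem~20.3.2]{Blackadar}: since $\R^{G_F}\oplus\R^{G_F}$ carries the $G_F$-invariant complex structure $\R^{G_F}\otimes_\R\bC$, the tensor square $C_0(\R^{G_F})^{\otimes 2}$ is $KK^{G_F}$-equivalent to $\bC$. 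Pairing the tensor factors off in twos therefore shows that $C_0(\R)^{\otimes F}$ is $KK^{G_F}$-equivalent to $\bC$ when $|G_F\backslash F|$ is even, and to $C_0(\R^{G_F})$ when $|G_F\backslash F|$ is odd.

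\emph{Stage 3: identify the summands and reassemble.} Since $A\mapsto K_*(A\rtimes_r G_F)$ factors through $KK^{G_F}$, the $KK^{G_F}$-equivalences of Stage~2 give $K_*(C_0(\R)^{\otimes F}\rtimes_r G_F)\cong K_*(C^*_r(G_F))$ in the even case, and $\cong K_*(C_0(\R^{G_F})\rtimes_r G_F)=K^*_{G_F}(\R^{G_F})$ in the odd case, the last equality being the Green--Julg identification of the crossed-product $K$-theory of $C_0(V)$ with equivariant topological $K$-theory for the finite group $G_F$. Sorting the index set $G\backslash\FIN$ into $F=\emptyset$, nonempty $F$ with $|G_F\backslash F|$ even, and nonempty $F$ with $|G_F\backslash F|$ odd then produces the three direct summands in the statement. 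The main obstacle I anticipate is bookkeeping rather than conceptual: one must establish the freeness of the $G_F$-action on $F$ to recognize the regular representation, and then keep the parity of $|G_F\backslash F|$ straight so that Bott periodicity is applied to exactly the right number of factors; the identification $K_*(C_0(V)\rtimes_r G_F)\cong K^*_{G_F}(V)$ for finite $G_F$ is standard but should be flagged explicitly.
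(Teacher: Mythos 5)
Your proposal is correct and follows essentially the same route as the paper's Example \ref{ex-S1}: reduce via Theorem \ref{thm-replace-by-JB-A0}/Corollary \ref{cor-UCT} with $B=C_0(\R)$ to the summands $K_*\left(C_0(\R)^{\otimes F}\rtimes_r G_F\right)$, use the free $G_F$-action on $F$ to write $C_0(\R)^{\otimes F}\cong C_0\left(\R^{G_F}\right)^{\otimes|G_F\backslash F|}$, and apply Kasparov's equivariant Bott periodicity to resolve the parity dichotomy, identifying the odd-case summands with $K^*_{G_F}\left(\R^{G_F}\right)$. Your explicit verification that $G_F$ acts freely on $F$ (which the paper leaves implicit in writing $F=G_F\cdot L_F$) is a welcome clarification but does not constitute a different argument.
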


\begin{example}\label{ex-rotation}  Let $A_\theta$ be the rotation algebra for $\theta \in \R$, the universal $C^*$-algebra generated by two unitaries $u$ and $v$ satisfying $uv=vue^{2\pi i\theta}$. It is well-known that $K_0(A_\theta)\cong \Z^2$, that $K_1(A_\theta)\cong \Z^2$, and that the unital inclusion $\iota\colon \bC \to A_\theta$ induces a split injection on $K_0$. Since $A_\theta$ satisfies the UCT, we can apply Corollary \ref{cor-UCT}
to $B=\bC\oplus C_0(\R) \oplus C_0(\R)$ and obtain
\begin{align*}
 K_\ast &\left(A_\theta^{\otimes G}\rtimes_rG\right)\\
&\quad  \cong K_*(C_r^*(G))\oplus  \bigoplus_{[F]\in G\backslash \FINx}K_\ast\left((\bC\oplus C_0(\R)\oplus C_0(\R))^{\otimes F} \rtimes_rG_F\right).
\end{align*}
 Using Theorem \ref{thm-replace-by-JB-A0} for $B=C_0(\R)\oplus C_0(\R)$, each summand for $F\in \FINx$ may be computed as
\begin{align*}
 &K_\ast\left((\bC\oplus C_0(\R)\oplus C_0(\R))^{\otimes F} \rtimes_rG_F\right) \\
 &\quad\quad\quad\quad\quad\quad\quad\quad\cong \bigoplus_{{[X_1, X_2]}}K_\ast\left(\left(C_0\left(\R^{X_1}\right)\otimes C_0\left(\R^{X_2}\right)\right) \rtimes_rG_{X_{1,2}}\right),
\end{align*}
 where the sum is taken over all ordered equivalence classes $[X_1, X_2]$ of pairs of ordered disjoint subsets $X_1, X_2$ of $F$ modulo the action of  the stabilizer $G_F$ of $F$, and where we set $G_{X_{1,2}}:=G_F\cap G_{X_1}\cap  G_{X_2}$.  As in Example \ref{ex-S1}, the computation of $K_\ast\left(\left(C_0\left(\R^{X_1}\right)\otimes C_0\left(\R^{X_2}\right)  \right)\rtimes_r G_{X_{1,2}}\right)$ either reduces to $K_\ast(C_r^*(G_{X_{1,2}}))$ or to that of $K_\ast\left(C_0\left(\R^{G_{X_{1,2}}}\right)\rtimes_rG_{X_{1,2}}\right)=K^\ast_{G_{X_{1,2}}}\left(\R^{G_{X_{1,2}}}\right)$ in general. We summarize the discussion as follows.
 \end{example}

\begin{theorem} Let $G$ be a discrete group satisfying BCC and let $\theta\in \R$. We have
\[
 K_\ast\left(A_\theta^{\otimes G} \rtimes_rG\right) 
 \cong  K_\ast(C^*_r(G)) \oplus  \bigoplus_{[F]\in G\backslash \FINx} K_F\]
where for all $F\in \FINx$ we set
 \[K_F\coloneqq \left(\bigoplus_{\underset{|G_{X_{1,2}}\backslash (X_1\sqcup X_2)| \,\, \mathrm{even}}{[X_1, X_2], X_1\sqcup X_2\subset F, }} K_\ast(C^*_r(G_{X_{1,2}})) \right) 
 \oplus  \left( \bigoplus_{\underset{ |G_{X_{1,2}}\backslash (X_1\sqcup X_2)| \,\, \mathrm{odd}}{[X_1, X_2], X_1\sqcup X_2\subset F,  } }K^\ast_{G_{X_{1,2}}}\left(\R^{G_{X_{1,2}}}\right)  \right).
 \]
\end{theorem}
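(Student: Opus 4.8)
The plan is to recognize this as a direct instance of Corollary \ref{cor-UCT} followed by the same Bott-periodicity reduction already carried out in Example \ref{ex-S1}. First I would record the $K$-theory input for $A_\theta$: it satisfies the UCT, $K_0(A_\theta)\cong\Z^2$, $K_1(A_\theta)\cong\Z^2$, and the unital inclusion $\iota\colon\bC\to A_\theta$ is split injective on $K_0$, splitting off one copy of $\Z$ and leaving cokernel $\tilde K_0(A_\theta)\cong\Z$, while $\tilde K_1(A_\theta)=K_1(A_\theta)\cong\Z^2$. The algebra $B=\bC\oplus C_0(\R)\oplus C_0(\R)$ satisfies the UCT and has precisely this $K$-theory ($K_0(B)\cong\Z$ from the $\bC$-summand, $K_1(B)\cong\Z^2$ from the two copies of $C_0(\R)$), so Corollary \ref{cor-UCT} applies verbatim and yields
\[
K_\ast\left(A_\theta^{\otimes G}\rtimes_rG\right)\cong K_\ast(C^*_r(G))\oplus\bigoplus_{[F]\in G\backslash\FINx}K_\ast\left(B^{\otimes F}\rtimes_rG_F\right),
\]
where the $F=\emptyset$ orbit, with stabilizer all of $G$, produces the leading $K_\ast(C^*_r(G))$ summand.

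Next I would analyse a single summand $K_\ast(B^{\otimes F}\rtimes_rG_F)$ for fixed nonempty $F$. Expanding the tensor product over the three summands of $B$ coordinate by coordinate identifies $B^{\otimes F}$, $G_F$-equivariantly, with $\bigoplus_{(X_1,X_2)}C_0(\R^{X_1})\otimes C_0(\R^{X_2})$, where $(X_1,X_2)$ ranges over \emph{ordered} pairs of disjoint subsets of $F$: the coordinate $x$ contributes $\bC$, the first $C_0(\R)$, or the second $C_0(\R)$ according to whether $x\notin X_1\cup X_2$, $x\in X_1$, or $x\in X_2$. This exhibits $B^{\otimes F}$ as a $G_F$-$C_0$-algebra over the discrete $G_F$-set of such pairs, with $g\cdot(X_1,X_2)=(gX_1,gX_2)$ (the two $C_0(\R)$-factors are never interchanged). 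Applying the orbit decomposition of Lemma \ref{lem-KKproduct} together with Green's imprimitivity theorem, exactly as in the proof of Theorem \ref{thm-replace-by-JB-A0}, gives
\[
K_\ast\left(B^{\otimes F}\rtimes_rG_F\right)\cong\bigoplus_{[X_1,X_2]}K_\ast\left(\left(C_0(\R^{X_1})\otimes C_0(\R^{X_2})\right)\rtimes_rG_{X_{1,2}}\right),
\]
the sum being over $G_F$-orbits of ordered disjoint pairs and $G_{X_{1,2}}=G_F\cap G_{X_1}\cap G_{X_2}$ the stabilizer.

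Finally I would evaluate each term by Bott periodicity. Writing $W=X_1\sqcup X_2\subseteq G$ and $H=G_{X_{1,2}}$, the factor is $C_0(\R^{X_1})\otimes C_0(\R^{X_2})\cong C_0(\R^{W})$ with $H$ permuting coordinates. Since left translation of $G$ on itself has trivial point stabilizers, $H$ acts freely on $W$, so $\R^{W}\cong(\R^{H})^{\oplus|H\backslash W|}$ as an orthogonal $H$-representation. Pairing up copies of the regular representation — each pair $\R^H\oplus\R^H$ carrying a complex structure — Kasparov's Bott-periodicity theorem $KK^{H}$-trivializes an even number of copies, so $C_0(\R^W)$ is $KK^{H}$-equivalent to $\bC$ when $|H\backslash W|$ is even and to $C_0(\R^{H})$ when $|H\backslash W|$ is odd, giving $K_\ast(C^*_r(H))$ respectively $K^\ast_H(\R^{H})$. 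Grouping these over the two parities and over all orbits $[X_1,X_2]$ yields the stated $K_F$, and substituting into the first display completes the proof. The only genuinely delicate point is the combinatorial bookkeeping of ordered disjoint pairs and their $G_F$-orbits together with the correct stabilizers $G_{X_{1,2}}$; once the freeness of the stabilizer action on subsets of $G$ is in hand, the Bott reduction is a verbatim reuse of Example \ref{ex-S1}, and identifying the right $B$ with its split-injectivity hypothesis is the only conceptual input.
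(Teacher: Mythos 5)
Your proposal is correct and follows essentially the same route as the paper: Corollary \ref{cor-UCT} with $B=\bC\oplus C_0(\R)\oplus C_0(\R)$, then a decomposition of $B^{\otimes F}$ over $G_F$-orbits of ordered pairs of disjoint subsets with stabilizers $G_{X_{1,2}}$, then the Bott-periodicity reduction (using freeness of left translation) exactly as in Example \ref{ex-S1}. The only cosmetic difference is that the paper obtains the pair decomposition by a second application of Theorem \ref{thm-replace-by-JB-A0} (with $B=C_0(\R)\oplus C_0(\R)$), whereas you expand the tensor product directly and invoke Lemma \ref{lem-KKproduct} and Green's imprimitivity --- which is the same computation in substance, since the relevant $KK$-equivalence there is the identity.
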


\subsection*{Cuntz algebras}
For $n\in \{2,3,\dotsc,\infty\}$ we denote by $\mathcal O_n$ the Cuntz algebra on $n$ generators. 

\begin{example}\cite[Corollary 6.9]{Szabo}\label{ex-Oinfty}
	Let $G$ be a discrete group satisfying BCC. Then the unital inclusion $\bC\to \mathcal O_\infty$ induces an isomorphism
		\[K_*(C^*_r(G))\cong K_*\left(\mathcal O_\infty^{\otimes G}\rtimes_r G\right).\]
\end{example}
\begin{proof}
	Combine Corollary \ref{cor-weak-equivalence} and the fact that the unital inclusion $\bC\to \cO_\infty$ is a $KK$-equivalence. 
\end{proof}

\begin{example}\label{ex-O2}
	Let $G$ be a discrete group satisfying BCC. Then we have 
		\[K_*\left(\mathcal O_2^{\otimes G}\rtimes_r G\right)=0.\]
\end{example}
\begin{proof}
	Combine Lemma \ref{lem-Izumi}, Theorem \ref{thm-goingdown} and the fact that $\cO_2$ is $KK$-equivalent to $0$. 
\end{proof}

\begin{example} \label{ex-On}
Let $G$ be a discrete group satisfying BCC. Let $A=\bC \oplus \cO_n$ for $n\geq3$. By Theorem \ref{thm-replace-by-JB-A0}, we have
\[
K_\ast\left(A^{\otimes G} \rtimes_rG\right)  \cong K_\ast(C^*_r(G)) \oplus \bigoplus_{[F]\in G\backslash \FINx}K_\ast\left(\cO_n^{\otimes F} \rtimes_rG_F\right).
\]
Each summand for $F=G_F\cdot L_F$ becomes $K_\ast\left( \left(\cO_n^{\otimes L_F}\right)^{\otimes G_F} \rtimes_rG_F\right)$. It follows from the UCT and an inductive application of the K\"unneth theorem that $\cO_n^{\otimes L_F}$ is $KK$-equivalent to $\cO_n\otimes (C_0(\R)\oplus \bC)^{|L_F|-1}$. Thus, by Lemma \ref{lem-Izumi}, we  may express $K_\ast(\cO_n^{\otimes F} \rtimes_rG_F)$ explicitly in terms of 
\[
K_\ast(\cO_n^{\otimes H} \rtimes_rH) \,\,\, \text{and} \,\,\, K_\ast( (C_0(\R)\otimes \cO_n)^{\otimes H} \rtimes_rH)
\]
for finite subgroups $H$ of $G_F$. These groups for $H=\Z/2$ are nicely computed in \cite{Izumi}. According to \cite{Izumi}, we have
\[
K_\ast\left(\cO_{n+1}^ {\otimes {\Z/2}} \rtimes_r\Z/2\right)  = \begin{cases} \Z/n \oplus \Z/n,  & \ast=0 \\ 0,  & \ast=1 \end{cases}
\]
for odd $n$, 
\[
K_\ast\left(\cO_{n+1}^ {\otimes	 {\Z/2}} \rtimes_r\Z/2\right)  = \begin{cases} \Z/\frac{n}{2} \oplus \Z/2n,  & \ast=0, \\ 0  & \ast=1  \end{cases}
\]
for even $n$, and 
\[
K_\ast\left(\left(C_0(\R)\otimes \cO_{n+1}\right)^ {\otimes {\Z/2}} \rtimes_r\Z/2\right)   = \begin{cases}0, & \ast=0 \\  \Z/n \oplus \Z/n,   & \ast=1 \end{cases}
\]
for odd $n$, 
\[
K_\ast\left(\left(C_0(\R)\otimes \cO_{n+1}\right)^ {\otimes {\Z/2}} \rtimes_r\Z/2\right)   =\begin{cases} 0, & \ast=0 \\  \Z/\frac{n}{2} \oplus \Z/2n,  & \ast=1  \end{cases}
\]
for even $n$.  Using  similar methods as in \cite{Izumi}, we  can compute the case $H=\Z/3$.  We omit the very technical computations. 
For general $H$, the computations become increasingly complicated as the order of $H$ increases.  

\begin{question} Is $K_\ast\left(\cO_n^{\otimes H} \rtimes_rH\right)$ computable for all finite groups $H$ or at least for all cyclic groups?
\end{question}
\end{example}

Although we do not know how to compute $K_*\left(\cO_n^{\otimes H}\rtimes_r H\right)$ in general, we can say something about its structure. 
The following Corollary is a combination of Theorem \ref{thm-localize} and Corollary \ref{cor-weak-equivalence}.
\begin{corollary}\label{cor-On-localized-is-zero}
	Let $G$ be a discrete group satisfying BCC, $Z$ a countable $G$-set, and $A$ a $C^*$-algebra. Let $M_\mathfrak n$ be a UHF-algebra of infinite type such that $A\otimes M_\mathfrak n$ is $KK$-equivalent to zero (for instance $A=\cO_{n+1}$ and $M_\mathfrak n=M_{n^{\infty}}$ for some $n\geq 2$). Then 
		\[K_*\left(A^{\otimes Z}\rtimes_r G\right)[1/\mathfrak n]\cong 0.\]
\end{corollary}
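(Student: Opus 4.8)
The plan is to split the argument into a \emph{localization} step and a \emph{vanishing} step, exactly as the statement suggests by invoking Theorem~\ref{thm-localize} and Corollary~\ref{cor-weak-equivalence}.

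First I would apply Theorem~\ref{thm-localize}, but with the \emph{coefficient} algebra taken to be the Bernoulli shift $A^{\otimes Z}$ itself, which is a legitimate $G$-$C^*$-algebra. Since $M_\mathfrak n$ is of infinite type, the hypotheses of Theorem~\ref{thm-localize} are met regardless of whether $Z$ is finite, and the inclusion $A^{\otimes Z}\to A^{\otimes Z}\otimes M_\mathfrak n^{\otimes Z}$ yields
\[
K_*\left(A^{\otimes Z}\rtimes_r G\right)[1/\mathfrak n]\;\cong\; K_*\left(\left(A^{\otimes Z}\otimes M_\mathfrak n^{\otimes Z}\right)\rtimes_r G\right).
\]
The next step is the identification of $G$-$C^*$-algebras
\[
A^{\otimes Z}\otimes M_\mathfrak n^{\otimes Z}\;\cong\;\left(A\otimes M_\mathfrak n\right)^{\otimes Z},
\]
which on elementary tensors is the obvious reshuffling $\left(\bigotimes_z a_z\right)\otimes\left(\bigotimes_z m_z\right)\mapsto \bigotimes_z (a_z\otimes m_z)$; one checks it is compatible with the connecting maps defining the two colimits and intertwines the two Bernoulli shifts with the Bernoulli shift on $\left(A\otimes M_\mathfrak n\right)^{\otimes Z}$. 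As $M_\mathfrak n$ is nuclear, all tensor products here are unambiguous.

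Writing $C\coloneqq A\otimes M_\mathfrak n$, it then remains to prove the vanishing statement: if $C$ is a unital $C^*$-algebra that is $KK$-equivalent to $0$, then $K_*\left(C^{\otimes Z}\rtimes_r G\right)=0$. This is where I would mimic the proof of Example~\ref{ex-O2}. The key point is that, because $KK(C,0)=0$, the class of the zero $\ast$-homomorphism $C\to 0$ is the unique element of $KK(C,0)$ and is therefore precisely the $KK$-equivalence witnessing $C\sim_{KK}0$. Feeding this $KK$-equivalence into Lemma~\ref{lem-Izumi} shows that for every finite subgroup $H\leq G$ and every finite $H$-subset $F\subseteq Z$ the map $C^{\otimes F}\to 0^{\otimes F}=0$ is a $KK^H$-equivalence, so $K_*\left(C^{\otimes F}\rtimes_r H\right)=0$. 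Since $K_*(-\rtimes_r H)$ commutes with filtered colimits, passing to the colimit over finite $H$-subsets of $Z$ gives $K_*\left(C^{\otimes Z}\rtimes_r H\right)=0$ for all finite $H$. Hence the zero class in $KK^G\left(C^{\otimes Z},0\right)$ is a weak $K$-equivalence, and Theorem~\ref{thm-goingdown} upgrades this to $K_*\left(C^{\otimes Z}\rtimes_r G\right)=0$, which completes the proof.

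I expect the only genuine subtlety, and hence the main obstacle, to be this last step: because the assignment $\phi\mapsto\phi^{\otimes F}$ of Lemma~\ref{lem-Izumi} is \emph{not} additive, one cannot simply argue that ``$\mathrm{id}_C=0$ in $KK(C,C)$ implies $\mathrm{id}_{C^{\otimes F}}=0$''. The correct move is the one above: apply Lemma~\ref{lem-Izumi} to the \emph{equivalence} represented by the zero map $C\to 0$, not to the zero endomorphism of $C$, using that $0\in KK(C,0)$ is invertible precisely when $C\sim_{KK}0$. One should also record that $A$ is tacitly assumed unital, so that $C^{\otimes Z}$ and its Bernoulli shift are defined, and verify the equivariance of the reshuffling isomorphism in the identification step.
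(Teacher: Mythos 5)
Your proposal is correct and follows essentially the same route as the paper: localize via Theorem~\ref{thm-localize} applied to the coefficient algebra $A^{\otimes Z}$, identify $A^{\otimes Z}\otimes M_{\mathfrak n}^{\otimes Z}\cong (A\otimes M_{\mathfrak n})^{\otimes Z}$ equivariantly, and then kill the $K$-theory using the $KK$-equivalence with $0$ fed through Lemma~\ref{lem-Izumi} and Theorem~\ref{thm-goingdown} (exactly the mechanism behind Corollary~\ref{cor-weak-equivalence} and Example~\ref{ex-O2}, and the same two-step argument the paper itself uses in the proof of Proposition~\ref{prop-On-fg}). Your observation that one must apply Lemma~\ref{lem-Izumi} to the equivalence $C\to 0$ rather than to the zero endomorphism of $C$, since $\phi\mapsto\phi^{\otimes F}$ is not additive, is a correct and worthwhile precision.
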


For $A=\cO_{n+1}$ and a finite group $H$, we can say a bit more:
\begin{proposition}\label{prop-On-fg}
	Let $H$ be a finite group, let $Z$ be a finite $H$-set and let $n\geq 2$. Then $K_*\left(\cO_{n+1}^{\otimes Z}\rtimes_r H\right)$ is a finitely generated abelian group $L$ such that $L[1/n]=0$. That is, any element in $L$ is annihilated by $n^k$ for some $k$. In particular, if $n=p$ is prime, then $L$ is isomorphic to the direct sum of finitely many $p$-groups $\Z/p^k\Z$. 
\end{proposition}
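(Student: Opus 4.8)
\textit{The plan} is to combine the localization statement of Corollary~\ref{cor-On-localized-is-zero} with a separate finite-generation argument, and then read off the structure of $L$ from these two facts. Throughout I write $L=K_*\left(\cO_{n+1}^{\otimes Z}\rtimes_r H\right)$.

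First I would dispose of the divisibility statement. This is exactly the instance singled out in Corollary~\ref{cor-On-localized-is-zero}: applying it with $G=H$, the finite $H$-set $Z$, $A=\cO_{n+1}$, and the infinite-type UHF-algebra $M_\mathfrak{n}=M_{n^{\infty}}$ (for which $\cO_{n+1}\otimes M_{n^{\infty}}\sim_{KK}0$, since it lies in the bootstrap class and has vanishing $K$-theory $\Z/n\otimes\Z[1/n]=0$) gives $L[1/n]\cong 0$. As localizing at $\mathfrak{n}=n^{\infty}$ inverts exactly the primes dividing $n$, this says precisely that every element of $L$ is annihilated by some power of $n$.

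The main work is to show that $L$ is finitely generated, and here I would pass to a convenient model of $\cO_{n+1}$ in $KK$. Since $\cO_{n+1}$ satisfies the UCT with $K_0=\Z/n$ and $K_1=0$, it is $KK$-equivalent to $D:=S\,C_\psi$, where $C_\psi$ is the mapping cone of the unital inclusion $\psi\colon\bC\hookrightarrow M_n$; concretely $C_\psi$ fits into a semisplit extension $0\to SM_n\to C_\psi\to\bC\to 0$, so that $D$ carries a two-step composition series whose subquotients are suspensions of the finite-dimensional algebras $\bC$ and $M_n$. By Lemma~\ref{lem-Izumi} the (non-equivariant) equivalence $\cO_{n+1}\sim_{KK}D$ upgrades to a $KK^H$-equivalence of Bernoulli shifts $\cO_{n+1}^{\otimes Z}\sim_{KK^H}D^{\otimes Z}$, and hence $L\cong K_*\left(D^{\otimes Z}\rtimes H\right)$; it therefore suffices to prove that the latter is finitely generated. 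Filtering $D^{\otimes Z}$ by the number of tensor factors lying in the ideal $S^2M_n$ of $D$, exactly as in the description of $\cJ^Z_{\bC,M_n}$ in Lemma~\ref{lem-JB1}, produces a finite $H$-equivariant composition series whose graded piece at level $k$ is, up to an overall suspension, $\bigoplus_{|F|=k}\mathrm{Ind}_{H_F}^{H}\left(M_n^{\otimes F}\right)$, where $F$ ranges over the $k$-element subsets of $Z$ and $H_F=\mathrm{Stab}_H(F)$. Since $H$ is finite the crossed product is exact, and Green's imprimitivity theorem turns each induced summand into $M_n^{\otimes F}\rtimes H_F$ up to suspension; being a crossed product of a finite-dimensional algebra by a finite group, this is finite-dimensional and so has finitely generated $K$-theory. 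As there are only finitely many graded pieces and finite generation is stable under extensions via the six-term exact sequence, $K_*\left(D^{\otimes Z}\rtimes H\right)$ is finitely generated.

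Finally I would combine the two steps: a finitely generated abelian group $L$ with $L[1/n]=0$ has vanishing free rank and only $n$-primary torsion, hence is a finite $n$-group; when $n=p$ is prime this is exactly a finite direct sum $\bigoplus_{1\leq j\leq N}\Z/p^{k_j}\Z$. The step I expect to be the main obstacle is verifying that the tensor-power filtration of $D^{\otimes Z}$ is genuinely $H$-equivariant with the claimed subquotients, and that the relevant extensions remain (semisplit) exact after crossing with $H$ so that the six-term sequence applies termwise; the identification of the building blocks $M_n^{\otimes F}\rtimes H_F$ and the computation of their $K$-theory are then routine.
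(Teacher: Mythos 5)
Your first step (the divisibility statement $L[1/n]=0$) is correct and coincides with the paper's argument: both amount to applying Theorem \ref{thm-localize} to $M_{n^\infty}$ together with the fact that $\cO_{n+1}\otimes M_{n^\infty}$ is $KK$-equivalent to zero. The problem lies in your finite-generation argument, and it is a genuine gap, not a presentational one. In your model $D=SC_\psi$ the ideal is $I=S^2M_n$ and the quotient is $Q=S\bC$, so the graded piece of the tensor-power filtration of $D^{\otimes Z}$ at level $k$ is $\bigoplus_{|F|=k}\left(C_0\left(\R^{2F}\oplus\R^{Z\setminus F}\right)\otimes M_n^{\otimes F}\right)$, and the crucial point is that $H$ (and after Green's imprimitivity, $H_F$) acts on the Euclidean coordinates by \emph{permutation}, not trivially. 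A suspension can only be pulled out of a crossed product when the group acts trivially on the suspension coordinates, so the summands after imprimitivity are $\left(C_0\left(\R^{2F}\oplus\R^{Z\setminus F}\right)\otimes M_n^{\otimes F}\right)\rtimes H_F$: these are not finite-dimensional, and their $K$-theory is not a degree shift of $K_*\left(M_n^{\otimes F}\rtimes H_F\right)$. A concrete counterexample to your identification: take $H=\Z/2$, $Z=H$, $F=\emptyset$; the piece is $C_0(\R^2)\rtimes\Z/2$ with the coordinate flip, whose $K$-theory is $K^*_{\Z/2}(\R^2)=(0,\Z)$ (cf. the formula quoted from \cite{EP} in Example \ref{ex-S1}), whereas your "overall suspension by $|Z|+k=2$" of $\bC\rtimes H_\emptyset=C^*(\Z/2)$ predicts $(\Z^2,0)$. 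So the step you yourself flagged as the main obstacle is exactly where the argument breaks.

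The gap is repairable, but it needs real extra input: the doubled coordinates $\R^{2F}\cong\C^F$ carry an $H_F$-invariant complex structure (the diagonal permutation action is complex-linear), so equivariant Bott periodicity removes them; what remains is $\left(C_0\left(\R^{Z\setminus F}\right)\otimes M_n^{\otimes F}\right)\rtimes H_F$, the equivariant $K$-theory of a \emph{real} permutation representation with finite-dimensional coefficients, and its finite generation must be established separately (e.g.\ by a finite $H_F$-CW decomposition of the representation sphere and induction over cells, or by invoking results of the type in \cite{Karoubi}, \cite{EP}). This is precisely the complication the paper's proof is designed to avoid: instead of a $KK$-model built from suspensions, it uses the Toeplitz extension $0\to\Compacts\to\mathcal{T}_{n+1}\to\cO_{n+1}\to 0$, whose ideal $\Compacts^{\otimes F}$ is $KK^{H_F}$-equivalent to $\bC$ by Lemma \ref{lem-Izumi} (no suspension coordinates ever appear), and handles the $\cO_{n+1}^{\otimes Z-F}$ factors in the subquotients by induction on $|Z|$ over all finite groups simultaneously, with $\mathcal{T}_{n+1}\sim_{KK}\bC$ (Pimsner) closing the induction. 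Either adopt that inductive scheme, or supply the equivariant Bott plus equivariant CW argument; as written, your proof of finite generation does not go through.
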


\begin{proof} The method used in \cite{Izumi}, at an abstract level, tells us that \linebreak
$K_\ast\left(\cO_{n+1}^{\otimes Z} \rtimes_rH\right)$ is finitely generated. To see this, let $\mathcal{T}_{n+1}$ be the universal $C^*$-algebra generated by isometries $s_1\cdots, s_{n+1}$ with mutually orthogonal range projections $q_i=s_is_i^\ast$ and we let $p=1-\sum_{1\leq j \leq n+1}q_i$. The ideal generated by $p$ is isomorphic to $\Compacts$ and we have the following exact sequences
 \begin{equation}\label{eq-exact-seq-Cuntz1-H}
 \xymatrix{
0 \ar[r] & I_1\rtimes H \ar[r] & \mathcal{T}_{n+1}^{\otimes Z}\rtimes H   \ar[r] &  \cO_{n+1}^{\otimes Z}\rtimes H \ar[r] &  0,
}
\end{equation}
 \begin{equation}\label{eq-exact-seq-Cuntz2-H}
 \xymatrix{
0 \ar[r] & I_{m+1}\rtimes H \ar[r] &  I_m\rtimes H  \ar[r] &  (I_m/I_{m+1})\rtimes H \ar[r] &  0,
}
\end{equation}
where for $1\leq m\leq |Z|$, an $H$-ideal $I_m$ of $\mathcal{T}_{n+1}^{\otimes Z}$ is defined as the ideal generated by $\Compacts^{\otimes F}\otimes \mathcal{T}_{n+1}^{\otimes  Z-F}$ for subsets $F$ of $H$ with $|F|=m$. In particular $I_{|Z|}= \Compacts^{\otimes Z}$. By Lemma \ref{lem-Izumi}, or by the stabilization theorem, $K_\ast\left(\Compacts^{\otimes Z}\rtimes H\right)\cong K_\ast(C^*(H))$. Moreover, for each $1\leq m<|Z|$, the quotient $I_m/I_{m+1}$ is the direct sum of $\Compacts^{\otimes F}\otimes \cO_{n+1}^{\otimes Z-F}$ over the subsets $F$ of $Z$ with $|F|=m$. Thus, $I_m/I_{m+1}\rtimes H$ is Morita-equivalent to the direct sum of $\Compacts^{\otimes F}\otimes \cO_{n+1}^{\otimes Z-F}\rtimes H_F$ over $[F]\in H\backslash \{ F \subset Z \mid |F|= m \}$ where $H_F$ is the stabilizer of $F$ in $H$. By induction on the size $|Z|$ of $Z$ (for all finite groups $H$ at the same time), $K_\ast\left(\Compacts^{\otimes F}\otimes \cO_{n+1}^{\otimes Z-F}\rtimes H_F\right)\cong K_\ast\left(\cO_{n+1}^{\otimes Z-F}\rtimes H_F\right)$ is finitely generated. Using the six-term exact sequences on $K$-theory associated to \eqref{eq-exact-seq-Cuntz2-H}, we see that $K_\ast(I_m \rtimes H)$ are all finitely-generated. By Lemma \ref{lem-Izumi}, $K_\ast\left( \mathcal{T}_{n+1}^{\otimes Z}\rtimes H \right)\cong K_\ast(C^*(H))$ since $\mathcal{T}_{n+1}$ is $KK$-equivalent to $\bC$ by \cite{Pimsner}.
Using the six-term exact sequence on $K$-theory associated to \eqref{eq-exact-seq-Cuntz1-H}, we now see that  $K_\ast\left( \cO_{n+1}^{\otimes Z}\rtimes H \right)$ is finitely-generated.

On the other hand, we have 
\[
K_\ast\left(\left(\cO_{n+1}^{\otimes Z} \otimes M_{n^\infty}^{\otimes Z}\right) \rtimes_rH\right)  \cong  K_\ast\left( \cO_{n+1}^{\otimes Z} \rtimes_rH\right)[1/n]
\]
by Theorem \ref{thm-localize}. The assertion follows from Lemma \ref{lem-Izumi} since $\cO_{n+1} \otimes M_{n^\infty}$ is $KK$-equivalent to $0$ by the UCT.
\end{proof}

For an infinite $G$-set $Z$, Corollary \ref{cor-On-localized-is-zero} for $A=\cO_{n+1}$ may also be deduced from the combination of Theorem \ref{thm-localize} and the following result (for $A=\cO_{n+1}\otimes M_n$). 

\begin{theorem}\label{ex-0}
Let $G$ be a discrete group satisfying BCC and let $Z$ be a countably infinite $G$-set. Let $A$ be any unital $C^*$-algebra such that $[1_{A^{\otimes r}}]=0\in K_0(A^{\otimes r})$ for some $r\geq 1$. Then we have
\[
K_\ast\left(A^{\otimes Z}\rtimes_rG\right) \cong 0.
\]
\end{theorem}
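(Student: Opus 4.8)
The plan is to reduce to the case of finite groups via the Going--Down principle (Theorem~\ref{thm-goingdown}), and then to run a telescoping argument in which all sufficiently long connecting maps vanish already in equivariant $KK$-theory. Throughout write $B=A^{\otimes r}$; the hypothesis $[1_{A^{\otimes r}}]=0\in K_0(A^{\otimes r})$ says precisely that the class of the unital inclusion $u\colon\bC\to B$ vanishes in $KK(\bC,B)=K_0(B)$, i.e.\ $[u]=0=[\mathbf 0]$, where $\mathbf 0\colon\bC\to B$ is the zero $\ast$-homomorphism.

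First I would record the key equivariant consequence. Fix a finite group $H$ and a finite $H$-set $F$, and apply the (non-additive) assignment of Lemma~\ref{lem-Izumi} to the pair $(\bC,B)$: it is a \emph{well-defined function} $KK(\bC,B)\to KK^H(\bC^{\otimes F},B^{\otimes F})=KK^H(\bC,B^{\otimes F})$ sending the class of a $\ast$-homomorphism $\phi$ to $[\phi^{\otimes F}]$. Since $[u]=[\mathbf 0]$ as elements of $KK(\bC,B)$, and since $\mathbf 0^{\otimes F}=\mathbf 0$, evaluating this function at their common value gives $[u^{\otimes F}]=[\mathbf 0^{\otimes F}]=0$ in $KK^H(\bC,B^{\otimes F})$. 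Thus the unital inclusion $\bC\to(A^{\otimes r})^{\otimes F}$ is $KK^H$-trivial for every finite $H$-set $F$. This is the delicate step: the tensor-power map is \emph{not} a group homomorphism, so the conclusion rests entirely on it being a function on $KK$-classes that sends the class of the zero map to $0$, together with the observation that $[1_{A^{\otimes r}}]=0$ is that class.

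Next I would treat the case $G=H$ finite. Decompose $Z=\bigsqcup_{i\geq 1}O_i$ into its (finite) $H$-orbits and set $Y_n=O_1\sqcup\dots\sqcup O_n$, a finite $H$-invariant subset, so that $A^{\otimes Z}=\filcolim_n A^{\otimes Y_n}$ $H$-equivariantly and hence $K_*(A^{\otimes Z}\rtimes_r H)=\colim_n K_*(A^{\otimes Y_n}\rtimes_r H)$. For $n<m$ the connecting $\ast$-homomorphism $A^{\otimes Y_n}\to A^{\otimes Y_m}$, $a\mapsto a\otimes 1$, has $KK^H$-class equal to the exterior Kasparov product $\mathrm{id}_{A^{\otimes Y_n}}\times[\iota]$, where $\iota\colon\bC\to A^{\otimes(Y_m\setminus Y_n)}$ is the unital inclusion. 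Because $H$ is finite there are only finitely many orbit types, so some type $O\cong H/K$ occurs infinitely often among the $O_i$; given $n$, choose $m$ so large that $O_{n+1},\dots,O_m$ contains at least $r$ orbits of type $O$. These $r$ orbits form an $H$-invariant subset isomorphic to $\{1,\dots,r\}\times O$, so $A^{\otimes(Y_m\setminus Y_n)}\cong(A^{\otimes r})^{\otimes O}\otimes A^{\otimes W'}$ for the remaining part $W'$, and $[\iota]$ factors through the $KK^H$-trivial inclusion $\bC\to(A^{\otimes r})^{\otimes O}$ from the first step. Hence $[\iota]=0$, the exterior product vanishes by bilinearity, and the connecting map $K_*(A^{\otimes Y_n}\rtimes_r H)\to K_*(A^{\otimes Y_m}\rtimes_r H)$ is zero. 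As every element of the colimit is eventually sent to $0$, we conclude $K_*(A^{\otimes Z}\rtimes_r H)=0$.

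Finally I would remove the finiteness assumption on $G$. For $G$ satisfying BCC, the unique element $0\in KK^G(A^{\otimes Z},0)$ is a weak $K$-equivalence: by the finite-group case applied to each finite $H\leq G$ (with $Z$ regarded as a still countably infinite $H$-set) it induces the isomorphism $K_*(A^{\otimes Z}\rtimes_r H)=0\xrightarrow{\cong}0=K_*(0\rtimes_r H)$. Theorem~\ref{thm-goingdown} then gives $K_*(A^{\otimes Z}\rtimes_r G)\cong K_*(0\rtimes_r G)=0$, as claimed. The main obstacle is the first step: propagating $[1_{A^{\otimes r}}]=0$ to the equivariant vanishing of the unital inclusion $\bC\to(A^{\otimes r})^{\otimes F}$ in $KK^H$ in spite of the non-linearity of the tensor-power construction; once that is secured, the telescope and the Going--Down reduction are routine.
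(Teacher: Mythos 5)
Your proposal is correct and takes essentially the same approach as the paper: reduce to finite subgroups $H$ via Theorem \ref{thm-goingdown}, use the well-definedness (rather than additivity) of the tensor-power map of Lemma \ref{lem-Izumi} to see that the relevant unital inclusions become zero in $KK^H$, and conclude by a colimit argument with vanishing connecting maps, exploiting that some orbit type occurs infinitely often in $Z$. The only deviation is bookkeeping: the paper collects all orbits of one fixed type $H/H_0$ into $(A^{\otimes L})^{\otimes H/H_0}$ and kills the connecting maps of that single telescope, while you exhaust $Z$ by finite $H$-invariant subsets $Y_n$ and kill each connecting map after passing sufficiently far out --- the underlying mechanism is identical.
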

\begin{proof} By Theorem \ref{thm-goingdown} it is enough to show $K_\ast(A^{\otimes Z}\rtimes_rH) \cong 0$ for all finite subgroups $H$ of $G$. Since $Z$ is infinite, it contains infinitely many orbits of type $H/H_0$ for some fixed subgroup $H_0\subset H$. Denote $Z_0$ the union of orbits of type $H/H_0$. Let $L$ be a (necessarily infinite) complete set of representatives for $H\backslash Z_0$. We have
\[
A^{\otimes Z_0} \cong (A^{\otimes L})^{\otimes H/H_0}.
\] 
By assumption, the unital inclusion $\bC\to A^{\otimes r}$ induces the zero element in $K_0(A^{\otimes r})=KK(\bC,A^{\otimes r})$. In particular, the maps
\[
A^{\otimes N} \to  A^{\otimes {N+r}},
\]
induce the zero map in $KK$-theory since,  on the level  of $KK$-theory, they  are given by
the exterior Kasparov product with $[0]=[1]\in KK(\bC,A^{\otimes r})$. 
It follows from Lemma \ref{lem-Izumi}, that the unital inclusions
\[
\left(A^{\otimes N}\right)^{\otimes H/H_0} \to  \left(A^{\otimes {N+r}}\right)^{\otimes H/H_0}
\]
are zero in $KK^H$. Writing $A^{\otimes Z}=A^{\otimes Z_0}\otimes A^{\otimes Z-Z_0}$, we have
\[
K_\ast\left(A^{\otimes Z}\rtimes_rH\right)  \cong \filcolim_N K_\ast\left(\left(\left(A^{\otimes N}\right)^{\otimes H/H_0} \otimes A^{Z-Z_0} \right)\rtimes_rH\right).
\]
The right-hand side is zero by the preceding argument.
\end{proof}
\begin{remark}\label{rem-ex-0}
Theorem \ref{ex-0} can be applied whenever $A$ is Morita equivalent to $\cO_{n^r+1}$ such that $[1_A]=n\in K_0(A)\cong \Z/{n^r}$, or whenever $K_0(A)\cong \Q/\Z$. In the second case, this is due to the fact that $[1]^{\otimes 2}\in K_0(A^{\otimes 2})$ is in the image of $K_0(A)\otimes_\Z K_0(A)\cong \Q/\Z\otimes_\Z\Q/\Z=0$. 
\end{remark}

\subsection*{Wreath products}
For discrete groups $G$ and $H$, the wreath product $H\wr G$ is defined as the semi-direct product $(\bigoplus_{g\in G} H)\rtimes G$, where $G$ acts by left translation. We have a canonical isomorphism 
	\[C^*_r(H\wr G)\cong C^*_r(H)^{\otimes G}\rtimes_r G.\]
	
An application of Corollary \ref{cor-UCT} gives the following result which generalizes \cite{XinLi}.

\begin{theorem}[Theorem \ref{introthm-wreath}]\label{thm-wreath} Let $G$ be a group satisfying BCC and let $H$ be a group for which $C^*_r(H)$ satisfies the UCT and for which the unital  inclusion $\bC\to C^*_r(H)$ induces a split injection $K_*(\bC)\to K_*(C^*_r(H))$.
Denote its cokernel by $\tilde K_\ast(C^*_r(H))$. Then we have
\[
K_\ast(C^*_r(H\wr G)) \cong K_\ast(C^*_r(G)) \oplus \bigoplus_{[F] \in G\backslash \FINx} K_\ast\left(B^{\otimes F} \rtimes_rG_F\right) 
\]
where $B$ is any $C^*$-algebra satisfying UCT with $K_\ast(B)\cong \tilde K_\ast(C^*_r(H))$. In particular, if $G$ is torsion-free, we have
\begin{align*}
K_\ast(C^*_r(H\wr G)) \cong K_\ast(C^*_r(G))\oplus \bigoplus_{[F]\in G\backslash \FINx}K_\ast\left(B^{\otimes F}\right).
\end{align*}
\end{theorem}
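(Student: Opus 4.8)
The plan is to recognize that this theorem is essentially a direct instance of Corollary~\ref{cor-UCT}, once we exploit the wreath-product identification recorded just above the statement. First I would fix the canonical isomorphism $C^*_r(H\wr G)\cong C^*_r(H)^{\otimes G}\rtimes_r G$, which recasts the left-hand side as the $K$-theory of a Bernoulli-shift crossed product with coefficient algebra $A\coloneqq C^*_r(H)$ over the $G$-set $Z=G$ equipped with the left-translation action. The three hypotheses required to feed this into Corollary~\ref{cor-UCT} are precisely the standing assumptions on $H$: the algebra $A=C^*_r(H)$ is unital, it satisfies the UCT, and the unital inclusion $\bC\to C^*_r(H)$ is split-injective on $K$-theory with cokernel $\tilde K_\ast(C^*_r(H))$. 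Choosing any UCT algebra $B$ with $K_\ast(B)\cong\tilde K_\ast(C^*_r(H))$ (which exists and is unique up to $KK$-equivalence), Corollary~\ref{cor-UCT} yields at once
\[
K_\ast(C^*_r(H\wr G))\cong\bigoplus_{[F]\in G\backslash\FIN}K_\ast\bigl(B^{\otimes F}\rtimes_r G_F\bigr).
\]

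Next I would isolate the contribution of the empty set. The orbit of $F=\emptyset$ is a single point with full stabiliser $G_\emptyset=G$, and since the empty tensor product is $B^{\otimes\emptyset}=\bC$, this summand is $K_\ast(\bC\rtimes_r G)=K_\ast(C^*_r(G))$. Splitting this orbit off from the rest produces the first asserted formula, with the remaining sum indexed by $G\backslash\FINx$.

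For the torsion-free refinement, the only point to verify is that the stabiliser $G_F$ is trivial for every $F\in\FINx$. This is where the left-translation action is used: any $g\in G_F$ permutes the finite set $F$, hence some power $g^m$ fixes a point of $F$; since left translation on $G$ is free this forces $g^m=e$, and torsion-freeness of $G$ then gives $g=e$. Consequently $B^{\otimes F}\rtimes_r G_F=B^{\otimes F}$ for all nonempty finite $F$, which gives the second displayed formula.

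The honest remark is that there is no serious obstacle here: all the analytic content has already been carried out in Corollary~\ref{cor-UCT} (and ultimately in Theorem~\ref{thm-replace-by-JB-A0} via the weak $K$-equivalence $\cJ_B^Z\to A^{\otimes Z}$ and Theorem~\ref{thm-goingdown}). The only things requiring any care are bookkeeping points: confirming that the wreath-product isomorphism is $G$-equivariant for the shift action, so that Corollary~\ref{cor-UCT} genuinely applies, and the elementary group-theoretic computation of the stabilisers in the torsion-free case. As the authors note in Remark~\ref{rem-split-injective}, the UCT and split-injectivity hypotheses hold automatically for a-$T$-menable (in particular amenable) $H$, so the theorem applies broadly despite the brevity of its proof.
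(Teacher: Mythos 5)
Your proposal is correct and follows exactly the route the paper takes: the paper's proof of Theorem \ref{thm-wreath} is precisely an application of Corollary \ref{cor-UCT} to $A=C^*_r(H)$ and $Z=G$ via the canonical isomorphism $C^*_r(H\wr G)\cong C^*_r(H)^{\otimes G}\rtimes_r G$, with the empty-set orbit contributing the $K_\ast(C^*_r(G))$ summand. Your verification that stabilizers $G_F$ of nonempty finite sets are trivial when $G$ is torsion-free (a power of any $g\in G_F$ fixes a point of $F$, forcing $g^m=e$ and hence $g=e$) correctly supplies the bookkeeping the paper leaves implicit.
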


\begin{remark}\label{rem-split-injective}
The assumptions on $H$ in the above theorem are not very restrictive:
If $H$ is a discrete group for which the Baum--Connes assembly map is split-injective (for instance if $H$ satisfies BCC, or  if $H$ is exact by \cite[Theorem 1.1]{Higson-exact}), then the unital inclusion $\bC\to C^*_r(H)$ induces a split injection $K_*(\bC)\to K_*(C^*_r(H))$ since the corresponding map $K^{\topo}_*(\{e\})\to K_*^{\topo}(H)$ always splits. 

On the other hand, it follows from Tu's  \cite[Proposition 10.7]{tu} that $C_r^*(H)$ satisfies the UCT 
for every a-$T$-menable (in particular every amenable) group $H$, or, more generally, if 
$H$ satisfies the strong Baum--Connes conjecture in the sense of \cite{MN}. 
  \end{remark}
  
\begin{example}\label{ex-wreath}
  Let $G$ be a group which satisfies BCC and consider  the wreath product $\mathbb F_n\wr G$ with $\mathbb F_n$ the free group in $n$ generators.  Since $\mathbb F_n$ is known to be a-$T$-menable it follows that Theorem \ref{thm-wreath} applies. Since $K_0(C_r^*(\mathbb F_n))=\Z$ and $K_1(C_r^*(\mathbb F_n))=\Z^n$
we may choose $B=\bigoplus_{i=1}^n C_0(\R)$ so that Theorem \ref{thm-wreath} implies 
\[K_*(C_r^*(\F_n\wr G))\cong K_*(C_r^*(G))\oplus\bigoplus_{F\in G\backslash \FINx} K_*(B^{\otimes F}\rtimes_r G_F),\]
where each summand $K_*(B^{\otimes F}\rtimes_r G_F)$ decomposes into a direct sum of 
equivariant $K$-theory groups of the form $K^*_{H}(V)$ for certain subgroups $H$ of $G_F$ and certain 
euclidean $H$-spaces $V$. A more precise analysis can be done, at least for $n=2$, 
 along the lines of  Example \ref{ex-rotation}.

In particular, if $G$ is torsion free, we get
\[K_*(C_r^*(\F_n\wr G))\cong  K_*(C_r^*(G))\oplus \bigoplus_{[F]\in G\backslash \FINx} K_*(B^{\otimes F})\]
with
$B^{\otimes F} \cong \bigoplus_{i=1}^{n^{|F|}} C_0(\R^{|F|})$. 
Therefore each $G$-orbit of a nonempty finite set $F\subseteq G$ provides $n^{|F|}$ copies  
of $\Z$
as direct summands of  $K_0$ if $|F|$ is even and of  $K_1$ if $|F|$ is odd.
\end{example}

We close this section with

\begin{corollary} Suppose that $G$ and $H$ are as as in Theorem \ref{thm-wreath}, such that the $K$-theory of both $C^*_r(G)$ and $C^*_r(H)$ is free abelian. Then the $K$-theory of $C^*_r(H\wr G)$ is free abelian as well. 
\end{corollary}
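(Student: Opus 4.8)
The plan is to combine the orbit decomposition of Theorem~\ref{thm-wreath} with the well-known freeness of the equivariant $K$-theory of Euclidean spaces. \emph{First}, I would check that the coefficient group $\tilde K_*(C^*_r(H))$ is free abelian. Since $\iota_*\colon K_0(\bC)\to K_0(C^*_r(H))$ is a split injection, we have $K_0(C^*_r(H))\cong \Z\oplus \tilde K_0(C^*_r(H))$, so $\tilde K_0(C^*_r(H))$ is isomorphic to a subgroup of the free abelian group $K_0(C^*_r(H))$ and hence is itself free abelian; moreover $\tilde K_1(C^*_r(H))=K_1(C^*_r(H))$ is free abelian by hypothesis. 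Letting $a,b\in\{0,1,\dots,\infty\}$ be the (countable) ranks of $\tilde K_0$ and $\tilde K_1$, I would then make the concrete choice
\[
B=\Big(\bigoplus_{a}\bC\Big)\oplus\Big(\bigoplus_{b}C_0(\R)\Big),
\]
a commutative $C^*$-algebra which therefore satisfies the UCT and has $K_*(B)\cong\tilde K_*(C^*_r(H))$, so it is an admissible choice in Theorem~\ref{thm-wreath}.

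\emph{Second}, Theorem~\ref{thm-wreath} gives
\[
K_*(C^*_r(H\wr G))\cong K_*(C^*_r(G))\oplus\bigoplus_{[F]\in G\backslash\FINx}K_*\!\left(B^{\otimes F}\rtimes_r G_F\right),
\]
with $K_*(C^*_r(G))$ free abelian by hypothesis. Because a countable direct sum of free abelian groups is free abelian, it suffices to show that each $K_*(B^{\otimes F}\rtimes_r G_F)$ is free abelian. Here every $G_F$ is finite, since $G$ acts on itself by left translation and $G_F\hookrightarrow\mathrm{Sym}(F)$ (an element $g\in G_F$ is determined by its action on any one point of $F$).

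\emph{Third}, I would expand $B^{\otimes F}$. Distributing the $c_0$-direct sum over the minimal tensor product yields a $G_F$-equivariant isomorphism of $B^{\otimes F}$ with a $c_0$-direct sum $\bigoplus_{y\in Y}C_0(\R^{X_y})$, where $Y$ is the discrete $G_F$-set of assignments of a chosen summand ($\bC$ or $C_0(\R)$) to each coordinate $x\in F$, the finite set $X_y\subseteq F$ records the coordinates carrying a $C_0(\R)$-factor, and $G_F$ permutes coordinates. Thus $B^{\otimes F}$ is a $G_F$-$C_0(Y)$-algebra, and by Green's imprimitivity theorem (exactly as at the end of the proof of Theorem~\ref{thm-replace-by-JB-A0}, via Lemma~\ref{lem-KKproduct}) we obtain
\[
K_*\!\left(B^{\otimes F}\rtimes_r G_F\right)\cong\bigoplus_{[y]\in G_F\backslash Y}K_*\!\left(C_0(\R^{X_y})\rtimes_r (G_F)_y\right),
\]
where $(G_F)_y$ is the stabilizer of $y$, acting on $\R^{X_y}$ by the orthogonal permutation representation.

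\emph{Finally}, each term $K_*(C_0(\R^{X_y})\rtimes_r(G_F)_y)$ is the equivariant topological $K$-theory $K^*_{(G_F)_y}(\R^{X_y})$ of a finite group acting orthogonally on a finite-dimensional Euclidean space, which is finitely generated free abelian by \cite[Theorem~1.8]{Karoubi}; the degenerate case $X_y=\emptyset$ simply gives $K_*(C^*_r((G_F)_y))$, which is free abelian because $C^*_r$ of a finite group is a finite direct sum of matrix algebras. Summing over $[y]$ and then over $[F]$, every summand is free abelian, hence so is $K_*(C^*_r(H\wr G))$. The only non-formal input is the freeness of $K^*_{(G_F)_y}(\R^{X_y})$, i.e.\ Karoubi's theorem; I expect the main bookkeeping obstacle to be keeping the $G_F$-action on the index set $Y$ straight so that the imprimitivity decomposition is applied correctly.
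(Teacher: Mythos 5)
Your proposal is correct and follows essentially the same route as the paper's own proof: choose $B$ as a direct sum of copies of $\bC$ and $C_0(\R)$ realizing the free abelian group $\tilde K_*(C^*_r(H))$, and reduce every summand in the wreath-product formula to equivariant $K$-theory groups $K^*_{G_0}(V)$ of finite groups acting on Euclidean spaces, which are free abelian by Karoubi's theorem. The only difference is that you spell out explicitly the $c_0$-direct-sum decomposition of $B^{\otimes F}$ and the imprimitivity argument over the $G_F$-set $Y$, steps the paper leaves implicit in the phrase ``the assertion follows from the fact that $K^*_{G_0}(V)$ is free abelian.''
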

\begin{proof} Note that $\tilde K_\ast(C^*_r(H))$ is free abelian as it is the direct summand of the free abelian group $K_\ast(C^*_r(H))$. Therefore, in Theorem \ref{thm-wreath}, $B$ can be taken as the direct sum of, possibly infinitely many, $\bC$ and $C_0(\R)$. The assertion follows from the fact that equivariant $K$-theory $K_{G_0}^\ast(\R^{G_0})$ (more generally $K^*_{G_0}(V)$ for any $G_0$-Euclidean space $V$) is a (finitely-generated) free abelian group for any finite group $G_0$ by \cite{Karoubi} (or \cite{EP}).
\end{proof}

\section{AF-algebras}\label{sec-AF}
Let $A=\filcolim_n A_n$ be a unital AF-algebra (with unital connecting maps) and let $G$ be a discrete group satisfying BCC. If $Z$ is a countable $G$-set, then in principle, we can try to compute 
	\[K_*\left(A^{\otimes Z}\rtimes_r G\right)\cong \filcolim_n K_*\left(A_n^{\otimes Z}\rtimes_r G\right) \]
using the decomposition from Theorem \ref{thm-finitedim-shift}. In general, such a computation can be challenging,  even in relatively simple cases like $A=M_2\oplus M_{3^\infty}$.
Instead of trying to compute the connecting maps, we now provide an 
abstract approach to calculating $K_*(A^{\otimes Z}\rtimes_r G)$ for an arbitrary unital AF-algebra 
$A$, a discrete group $G$ satisfying BCC and a countable proper $G$-set $Z$ 
(for example $Z=G$ with the left translation action). 

We first need some preparation. Let $\mathcal{F}$ be a family of subgroups, i.e. a non-empty set of subgroups of $G$ closed under taking conjugates and subgroups. The orbit category $\OrFG$ has as objects homogeneous $G$-spaces $G/H$ for each $H\in \mathcal{F}$ and as morphisms $G$-maps (see \cite[Definition 1.1]{DL}). We will mainly use the family $\mathcal{FIN}$ of finite subgroups. For any $G$-$C^*$-algebra $A$, we have a functor from $\OrFG$ to the category of graded abelian groups that sends $G/H$ to $K_\ast (A\rtimes_rH)$ and a morphism $G/H_0\to G/{H_1}$ given by $H_0 \mapsto gH_1$ (so that $H_0\subseteq gH_1g^{-1}$) to the map $K_\ast(A\rtimes_rH_0) \to K_\ast(A\rtimes_rH_1)$ induced by the composition 
\[
A\rtimes_rH_0 \hookrightarrow A\rtimes_r{gH_1g^{-1}}\cong A\rtimes_rH_1,\]
where the second map is given by conjugation with $g^{-1}$ inside $A\rtimes_rG$.\footnote{This is a well-defined functor since different choices for $g$ give the same map on $K$-theory. On the other hand, $G/H\mapsto A\rtimes_rH$ does not define a functor. This is why it requires more care to upgrade this to a functor taking values in the category of spectra, see \cite[Section 2]{DL}.}
We denote by
  \[
  \underset{G/H \in \OrFG}{\colim} K_\ast(A\rtimes_rH)
  \] 
  the colimit of the functor $ G/H \mapsto K_\ast(A\rtimes_rH)$ from $\OrFG$ to the category of graded abelian groups.
The inclusions $A\rtimes_rH\to A\rtimes_rG$ induce a natural homomorphism
\begin{equation}\label{eq-0dim-approx}
 \underset{G/H \in \OrFG}{\colim} K_\ast(A\rtimes_rH) \to K_\ast(A\rtimes_rG).
 \end{equation}

\begin{remark}
The map \eqref{eq-0dim-approx} should not be confused with the assembly map
\[
\mathrm{asmb}_{\mathcal{F}}\colon H^G_\ast(E_\mathcal{F} G, \mathbb{K}^{\mathrm{top}}_A) \to K_\ast(A\rtimes_rG),
\]
which corresponds to taking the homotopy colimit 
at the level of $K$-theory spectra instead of taking the ordinary colimit at the level of $K$-theory groups, see \cite[Section 5.1]{DL}.
By the involved universal properties, there is a natural commuting diagram
\[
\xymatrix{
\underset{G/H \in \OrFG}{\colim} K_\ast(A\rtimes_rH) \ar[r] \ar[d] & K_\ast(A\rtimes_rG) \\
 H^G_\ast(E_\mathcal{F} G, \mathbb{K}^{\mathrm{top}}_A)\ar[ru] &
},
\]
but the vertical map is neither injective nor surjective in general. One can think of the map \eqref{eq-0dim-approx} as the best approximation of $K_\ast(A\rtimes_rG)$ using $0$-dimensional $G$-$\mathcal{F}$-CW-complexes. Likewise, the best approximation by $r$-dimensional $G$-$\mathcal{F}$-CW-complexes can be defined by taking the colimit of $H^G_\ast(X, \mathbb{K}^{\mathrm{top}}_A)$ over the category of $r$-dimensional $G$-$\mathcal{F}$-CW-complexes.
\end{remark}

Let us give two examples of how to compute ${\underset{G/H\in \OrFING}\colim K_\ast(A\rtimes_r H)}$ when the structure or $\OrFING$ is understood. Recall that the \emph{coinvariants} of a $G$-module $L$ are given by 
	\[L_G\coloneqq \colim_G L\cong L/\langle x-gx \mid g\in G, x\in L\rangle .\]
Here the group $G$ is considered as a category with one object with morphisms given by the elements of $G$, and $L$ is considered as a functor from $G$ to the category of (graded) abelian groups. \begin{example}
If $G$ is torsion-free, then $\underset{G/H \in \OrFING}\colim K_\ast(A\rtimes_rH)$ can be identified with the coinvariants $K_\ast(A)_G$.
\end{example}

\begin{example}
If $G$ has only one non-trivial conjugacy class $[H]$ of finite subgroups and if $N_G(H)$ 
denotes the normalizer of $H$ in $G$, then  the colimit 
${\underset{G/H \in \OrFING}{\colim}  K_\ast(A\rtimes_rH)}$ is given by the pushout
\[\xymatrix{
	K_*(A)\ar[r]\ar[d]		&K_*(A\rtimes_r H)_{N_G(H)}\ar[d]\\
	K_*(A)_G\ar[r]			&\underset{G/H \in \OrFING}{\colim}  K_\ast(A\rtimes_rH).
}\]
\end{example}

For a $G$-module $M$ and a set $S$ of positive integers, we denote by  
	\[M[S^{-1}]\coloneqq \filcolim (M\xrightarrow{\cdot s_1}M\xrightarrow{\cdot s_2}\cdots)\cong M\otimes_\Z \Z[S^{-1}]\]
its localization at $S$, where $(s_1,s_2,\dotsc)$ is a sequence containing every element of $S$ infinitely many times. Note that localization at $S$ commutes with taking coinvariants since both constructions are colimits. 

\begin{theorem}[Theorem \ref{introthm-AF}]\label{thm-AF-general}
	Let $A$ be a unital $C^*$-algebra of the form $A=\filcolim_n M_{k_n}(A_n)$ (with unital connecting maps) where $(k_n)_{n\in\N}$ is a sequence of positive integers and where each $A_n$ is a unital $C^*$-algebra satisfying the assumptions of Theorem \ref{thm-replace-by-JB-A0} for the unital inclusion $\bC\hookrightarrow A_n$. 
	Let $G$ be an infinite discrete group satisfying BCC, let $Z$ be a countable proper $G$-set, and let $S\subseteq \Z$ be the set of all positive integers $n$ such that $[1_A]\in K_0(A)$ is divisible by $n$. 
	Then, the natural inclusions

\begin{align}
C^*_r(H) \to C^*_r(G),  \,\,\,& C^*_r(H)\to A^{\otimes Z}\rtimes_rH,\nonumber \\
C^*_r(G) \to A^{\otimes Z}\rtimes_rG, \,\,\,&  A^{\otimes Z}\rtimes_rH \to A^{\otimes Z}\rtimes_rG,\label{eq-natural-maps}
\end{align}
induce the following pushout diagram 
\[
\xymatrix{
  \underset{G/H \in \OrFING}{\colim} K_\ast(C^*_r(H))[S^{-1}]    \ar[r] \ar[d] &  \underset{G/H \in \OrFING}{\colim} K_\ast(A^{\otimes Z} \rtimes_rH) \ar[d] \\
 K_\ast(C^*_r(G))[S^{-1}]   \ar[r]  & K_\ast\left(A^{\otimes Z} \rtimes_rG\right).
}
\] 
In particular, if $G$ is torsion-free, this pushout diagram reads
\[
\tilde K_\ast(C^*_r(G))[S^{-1}]  \oplus   K_\ast\left(A^{\otimes Z})_G \cong   K_\ast(A^{\otimes Z} \rtimes_rG\right),
\]
where $\tilde K_\ast(C^*_r(G))$ denotes the cokernel of  $K_\ast(\bC)\to K_\ast(C_r^*(G))$ induced from the unital 
inclusion $\bC\hookrightarrow C_r^*(G)$.
\end{theorem}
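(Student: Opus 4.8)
The plan is to reduce to the building blocks $A_n$ and to let the matrix amplifications $M_{k_n}$ produce the localization through Theorem \ref{thm-localize}. First I would rewrite $A^{\otimes Z}=\filcolim_n\bigl(M_{k_n}(A_n)\bigr)^{\otimes Z}$ and use the $G$-equivariant identification $\bigl(M_{k_n}(A_n)\bigr)^{\otimes Z}\cong A_n^{\otimes Z}\otimes M_{k_n}^{\otimes Z}$, in which $M_{k_n}^{\otimes Z}$ is a UHF-algebra carrying its Bernoulli shift. Theorem \ref{thm-localize} then yields, for every subgroup $H\subseteq G$, a natural isomorphism $K_*\bigl(M_{k_n}(A_n)^{\otimes Z}\rtimes_r H\bigr)\cong K_*\bigl(A_n^{\otimes Z}\rtimes_r H\bigr)[1/k_n]$, and since $K_*(-\rtimes_r H)$ commutes with filtered colimits the groups in the theorem are the colimits over $n$ of these stage-$n$ groups. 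A short computation identifies $S$: applying the split retraction of $K_*(\bC)\to K_*(A_\ell)$ shows that a prime $p$ divides $[1_A]$ if and only if $p\mid k_\ell$ for some $\ell$, so that $[S^{-1}]$ inverts precisely the primes occurring in the $k_\ell$.

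Next I would settle the pushout for a single building block, i.e.\ for any unital $A_n$ as in Corollary \ref{cor-UCT}; here $[1_{A_n}]$ is indivisible, $S=\{1\}$, and there is no localization. By Corollary \ref{cor-UCT} the algebra $A_n^{\otimes Z}$ is weakly $K$-equivalent to $\cJ^Z_{B_n}$, and as a $G$-$C^*$-algebra this splits as $\cJ^Z_{B_n}=\bC\oplus\cJ^{Z,\times}_{B_n}$ with $\cJ^{Z,\times}_{B_n}=\bigoplus_{F\in\FINx(Z)}B_n^{\otimes F}$. Because $Z$ is proper, $\FINx(Z)$ is a proper $G$-set, so $\cJ^{Z,\times}_{B_n}\cong\bigoplus_{[F]\in G\backslash\FINx(Z)}\mathrm{Ind}_{G_F}^{G}(B_n^{\otimes F})$ is a proper coefficient algebra with all $G_F$ finite. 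The whole square then splits as the sum of the trivial $\emptyset$-square (whose top edge is an identity, hence a pushout) and the $\FINx$-square governed by the comparison map \eqref{eq-0dim-approx} for $\cJ^{Z,\times}_{B_n}$. The crucial input is that this comparison map is an isomorphism for proper coefficients: by Green's imprimitivity theorem both sides decompose over $G\backslash\FINx(Z)$, and for each induced summand the orbit-category colimit collapses to the value $K_*(B_n^{\otimes F}\rtimes_r G_F)$ at the finite stabilizer $G_F\in\OrFING$ (cf.\ \cite{DL}). This gives the pushout for every $A_n$.

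To assemble the general case I would phrase the desired pushout as the right-exact sequence
\begin{multline*}
\underset{G/H\in\OrFING}{\colim}K_*(C^*_r(H))[S^{-1}]\longrightarrow\\
K_*(C^*_r(G))[S^{-1}]\oplus\underset{G/H\in\OrFING}{\colim}K_*\bigl(A^{\otimes Z}\rtimes_r H\bigr)\longrightarrow K_*\bigl(A^{\otimes Z}\rtimes_r G\bigr)\longrightarrow 0
\end{multline*}
and verify it by passing to the colimit of the stage-$n$ pushouts from the previous paragraph (localized at $k_n$ via Theorem \ref{thm-localize}). Surjectivity of the second map passes directly to the colimit, since each stage-$n$ pushout exhibits $K_*(M_{k_n}(A_n)^{\otimes Z}\rtimes_r G)$ as generated by the images of its group term and its comparison term. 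The localization is produced by the observation that the image of the unital inclusion $C^*_r(G)\to A^{\otimes Z}\rtimes_r G$ is $S$-divisible: at stage $n$ the target is $[1/k_n]$-local, so $[1_A]$ and every element of this image can be divided by $k_n$, and these divisions persist in the colimit. This is what lets $K_*(C^*_r(G))[S^{-1}]$ map into $K_*(A^{\otimes Z}\rtimes_r G)$ and pins the localization onto the group part, the remaining exactness being obtained from the stage-$n$ pushouts and the exactness of filtered colimits. The torsion-free statement then follows by collapsing the orbit-category colimits to coinvariants and splitting off $K_*(\bC)[S^{-1}]$ via the split injection $K_*(\bC)\to K_*(C^*_r(G))$, giving $\tilde K_*(C^*_r(G))[S^{-1}]\oplus K_*(A^{\otimes Z})_G$.

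The main obstacle is the interaction of the colimit with the localization. The connecting maps of the tower are induced by the unital maps $M_{k_n}(A_n)\to M_{k_{n+1}}(A_{n+1})$, which do \emph{not} arise from $*$-homomorphisms $A_n\to A_{n+1}$ and do not respect the $\bC\oplus\cJ^{Z,\times}_{B_n}$ splitting nor the $k_n$-localized unit summand. Consequently one cannot naively take a colimit of the localized stage-$n$ squares corner by corner: localizing only at the individual $k_n$ fails because the connecting map need not preserve the $k_n$-localized unit summand (the required $k_n$-divisibility is simply unavailable at stage $n+1$), while over-localizing every corner at $S$ would wrongly force the coefficient contribution $K_*(A^{\otimes Z})_G$ to become $S$-local. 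The delicate point is therefore to show that exactly the right amount of localization survives in the colimit---that $[S^{-1}]$ attaches to the reduced group part $\tilde K_*(C^*_r(G))$ and to nothing else---which is achieved through the $S$-divisibility of $[1_A]$ together with exactness of filtered colimits applied to the stage-$n$ pushouts.
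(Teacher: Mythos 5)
Your proposal follows the same route as the paper's own proof: replace each building block by $\cJ^Z_{B_n}$ using Theorem \ref{thm-replace-by-JB-A0}, split off the $F=\emptyset$ summand so that the square decomposes into an identity square and a square governed by the comparison map \eqref{eq-0dim-approx}, absorb the matrix amplifications via Theorem \ref{thm-localize}, and pass to the filtered colimit. (One cosmetic inaccuracy: the hypothesis on $A_n$ is that of Theorem \ref{thm-replace-by-JB-A0}, not of Corollary \ref{cor-UCT}; no UCT is available, but your argument only uses the weak $K$-equivalence with $\cJ^Z_{B_n}$, which Theorem \ref{thm-replace-by-JB-A0} provides directly.) The genuine gap is precisely the step you call ``the crucial input'': that for the induced coefficient algebra $\dot\cJ^Z_{B_n}\cong\bigoplus_{[F]\in G\backslash\FINx(Z)}\mathrm{Ind}_{G_F}^G\left(B_n^{\otimes F}\right)$ the comparison map \eqref{eq-0dim-approx} is an isomorphism. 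You justify this by saying that both sides decompose over $G\backslash\FINx(Z)$ and that for each induced summand the orbit-category colimit ``collapses'' to the value at $G_F$, citing \cite{DL}. Neither half is automatic, and the statement is not contained in \cite{DL}: it is exactly Lemma \ref{lem-colimit-isom} of the paper. Green's imprimitivity theorem (in the form \cite[Proposition 2.6.8]{CELY}) identifies the target $K_*\left(\mathrm{Ind}_{G_F}^G\left(B_n^{\otimes F}\right)\rtimes_r G\right)$ with $K_*\left(B_n^{\otimes F}\rtimes_r G_F\right)$, but the source is a colimit over $\OrFING$ of groups that decompose over double cosets $H\backslash G/G_F$ as in \eqref{eq-double-cosets}, not over $G$-orbits, and the relevant functor is not free on the orbit category, so there is no formal co-Yoneda collapse. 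What must be proved is surjectivity of the map from $K_*\left(B_n^{\otimes F}\rtimes_r G_F\right)$ into the colimit: each double-coset summand $K_*(A_z\rtimes_r H_z)$ has to be identified inside the colimit--using the orbit-category morphisms $G/H_z\to G/H$ and the conjugation morphism $G/H_z\to G/(z^{-1}H_zz)$--with classes coming from the fibre over the base coset and the subgroup $z^{-1}Hz\cap G_F$ of $G_F$. This surjectivity argument is the only non-routine ingredient of the whole theorem, and your proposal does not contain it.

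By contrast, the difficulty that occupies your final paragraph is not actually present. Since the connecting maps $M_{k_n}(A_n)\to M_{k_{n+1}}(A_{n+1})$ are unital and $K_0(\bC)\to K_0(A_{n+1})$ is split injective, comparing the two expressions for the class of the unit gives $k_n\cdot y=k_{n+1}[1_{A_{n+1}}]$ in $K_0(A_{n+1})$ for a suitable class $y$, and applying the retraction yields $k_n\mid k_{n+1}$. Hence the sets $S_n$ of divisors of the unit at stage $n$ increase with union $S$ (this is the paper's observation $S=\bigcup_k S_k$), every stage-$(n+1)$ corner is already $[1/k_n]$-local by Theorem \ref{thm-localize}, the localized stage squares form a directed system because factorizations through a localization into a genuinely $S_n$-local target are unique, and a filtered colimit of pushout squares is a pushout square. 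Your ``divisibility persists in the colimit'' argument reaches the same conclusion, but the claimed obstruction--that $k_n$-divisibility is unavailable at stage $n+1$--is false under the standing hypotheses, so no delicate analysis of how much localization survives is needed.
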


\begin{remark}\label{rem-AF-examples}
	By Theorem \ref{thm-coprime-KK}, Theorem \ref{thm-AF-general} applies to all unital AF-algebras. More generally, Theorem \ref{thm-AF-general} applies whenever $A$ is of the form $\filcolim_n M_{k_n}(A_n)$ (with unital connecting maps) where each $A_n$ is one of the following examples:
	\begin{enumerate}
		\item The unitization $B^+$ of a $C^*$-algebra $B$, e.g. $\bC \oplus B$ for unital $B$;
		\item A $C^*$-algebra of the form $\bigoplus_{1\leq j\leq N}C(X_j)\otimes M_{k_j}$ for nonempty
		compact metric spaces $X_j$ and $\gcd(k_1,\dotsc,k_N)=1$ (use Theorem \ref{thm-coprime-KK});
		\item A reduced group $C^*$-algebra $C^*_r(\Gamma)$ of a countable group $\Gamma$ that satisfies the UCT and such that the map $K_*(\bC)\to K_*(C^*_r(\Gamma)))$ is a split-injection (see Remark \ref{rem-split-injective} and Theorem \ref{thm-wreath}).
	\end{enumerate}
	It would be interesting to know if Theorem \ref{thm-AF-general} also holds for unital ASH-algebras, i.e. when $A$ is a filtered colimit of algebras of the form $p(C(X)\otimes M_n) p$ for a projection $p\in C(X)\otimes M_n$. 
\end{remark}

For the proof of Theorem \ref{thm-AF-general}, we need the following Lemma. 

\begin{lemma}\label{lem-colimit-isom} 
Let $G$ be any discrete group, let $\cF$ be a family of subgroups of $G$, let $H_i\in \cF$, and let $A_i$ be $H_i$-$C^*$-algebras for $i\in I$. Denote by $A=\oplus_{i \in I}\mathrm{Ind}_{H_i}^G(A_i)$ the direct sum of the induced $G$-$C^*$-algebras. Then, the natural map
\[
  \underset{G/H \in \OrFG}{\colim}K_\ast(A\rtimes_rH) \to   K_\ast(A\rtimes_rG)
\]
is an isomorphism. 
\end{lemma}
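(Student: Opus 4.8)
The plan is to reduce the statement to the special case of a single induced algebra and then verify the isomorphism directly. First I would observe that both sides of the claimed isomorphism commute with direct sums: crossed products commute with (countable) direct sums, $K$-theory commutes with filtered colimits of $C^*$-algebras and hence with direct sums, and the colimit over $\OrFG$ is a colimit and so commutes with the coproduct indexing $I$. Thus it suffices to treat $A=\mathrm{Ind}_{H_0}^G(A_0)$ for a single subgroup $H_0\in\cF$ and a single $H_0$-$C^*$-algebra $A_0$.

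The key computational input is a version of the decomposition used in Lemma \ref{lem-KKproduct}: for any subgroup $H\subseteq G$, restricting the induced algebra $\mathrm{Ind}_{H_0}^G(A_0)$ to $H$ and forming the crossed product yields, by the Mackey-type decomposition of $\mathrm{Res}_H^G\mathrm{Ind}_{H_0}^G(A_0)$ into induced algebras indexed by the double cosets $H\backslash G/H_0$, a Morita equivalence (via Green's imprimitivity theorem, exactly as invoked at the end of the proof of Theorem \ref{thm-replace-by-JB-A0})
\[
\mathrm{Ind}_{H_0}^G(A_0)\rtimes_r H \sim_{\mathrm{Morita}} \bigoplus_{[g]\in H\backslash G/H_0} A_0^{(g)}\rtimes_r (H\cap gH_0g^{-1}),
\]
where $A_0^{(g)}$ denotes $A_0$ with the $H\cap gH_0g^{-1}$-action twisted by $g$. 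On $K$-theory this identifies $K_\ast(\mathrm{Ind}_{H_0}^G(A_0)\rtimes_r H)$ with a sum of groups $K_\ast(A_0\rtimes_r(H\cap gH_0g^{-1}))$ over the double cosets. In particular, for $H=G$ itself there is exactly one double coset and we recover $K_\ast(\mathrm{Ind}_{H_0}^G(A_0)\rtimes_r G)\cong K_\ast(A_0\rtimes_r H_0)$.

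With this decomposition in hand, the remaining task is to identify the colimit on the left-hand side. The plan is to show that the functor $G/H\mapsto K_\ast(\mathrm{Ind}_{H_0}^G(A_0)\rtimes_r H)$ on $\OrFG$ is, via the Morita isomorphisms above, the colimit-computation for the $G$-set $G/H_0$ with coefficients, and that its colimit over $\OrFG$ is exactly $K_\ast(A_0\rtimes_r H_0)$. Concretely, the comma-category argument identifies the indexing category for the colimit with the orbit category of $H_0$-sets built from $G/H_0$, whose terminal-type object is $G/H_0$ itself; the transition maps in the colimit (conjugation and inclusion, as defined just before Theorem \ref{thm-AF-general}) then collapse every double-coset summand into the single summand corresponding to $H=G$. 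The natural map to $K_\ast(\mathrm{Ind}_{H_0}^G(A_0)\rtimes_r G)\cong K_\ast(A_0\rtimes_r H_0)$ is therefore an isomorphism.

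The main obstacle I anticipate is bookkeeping rather than depth: one must check carefully that the Morita equivalences above are compatible with the functorial structure maps of $\OrFG$ (the conjugation-and-inclusion maps), so that the colimit is genuinely computed by the combinatorics of double cosets and collapses correctly. This naturality check, tracing how an inclusion $G/H\to G/H'$ acts on the double-coset decompositions on both sides, is the delicate step; once it is in place, the isomorphism follows formally from the universal property of the colimit together with the single-double-coset computation at $H=G$.
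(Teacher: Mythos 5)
Your overall route is the same as the paper's: reduce by additivity to a single induced algebra $A=\mathrm{Ind}_{H_0}^G A_0$, decompose $K_\ast(A\rtimes_r H)$ over the double cosets $H\backslash G/H_0$, observe that $H=G$ gives $K_\ast(A\rtimes_r G)\cong K_\ast(A_0\rtimes_r H_0)$ (Green's imprimitivity, \cite[Proposition 2.6.8]{CELY}), and collapse the double-coset summands via conjugation. The gap is in the step where you compute the colimit. The category of summands (the Grothendieck construction of $G/H\mapsto \bigoplus_{[z]\in H\backslash G/H_0} K_\ast(A_z\rtimes_r H_z)$, with $H_z=H\cap zH_0z^{-1}$) does \emph{not} have $(G/H_0,[e])$ as a terminal object: a morphism $(G/H,[z])\to (G/H_0,[e])$ exists only when $H\subseteq zH_0z^{-1}$, i.e.\ only when $H_z=H$. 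For a general summand the collapse goes through a zig-zag $(G/H,[z])\leftarrow (G/H_z,[z])\to (G/H_0,[e])$, where the left-hand arrow induces an isomorphism on summands. Such a zig-zag argument proves that $K_\ast(A_0\rtimes_r H_0)\to \colim$ is surjective, but nothing more; it cannot rule out further identifications inside that summand. (Toy example: the category with two objects $a,b$ and two parallel arrows $f,g\colon a\to b$, with $F(f),F(g)$ both isomorphisms; every object receives an arrow from $a$ inducing an isomorphism on $F$, yet the colimit is $\coker(F(f)-F(g))$, in general a proper quotient of $F(a)$.) So the claim that "the colimit is exactly $K_\ast(A_0\rtimes_r H_0)$" is not justified by terminality or cofinality, and the concluding "therefore" does not close the argument.

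The missing step is exactly how the paper finishes: injectivity must come from the natural map itself. The Green--Mackey isomorphism $K_\ast(A_0\rtimes_r H_0)\xrightarrow{\;\cong\;} K_\ast(A\rtimes_r G)$ factors as $K_\ast(A_0\rtimes_r H_0)\to K_\ast(A\rtimes_r H_0)\to \colim \to K_\ast(A\rtimes_r G)$; since the map into the colimit is surjective by the collapse argument, this factorization forces both it and the map $\colim\to K_\ast(A\rtimes_r G)$ to be isomorphisms. Your closing remark that the result "follows formally from the universal property of the colimit together with the single-double-coset computation at $H=G$" gestures at this, but the factorization is the essential content of the proof rather than bookkeeping; without it the argument fails. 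A minor stylistic point: the naturality check you flag as delicate becomes transparent if, as in the paper, one works with the fiberwise decomposition $A=\bigoplus_{[z]\in G/H_0}A_z$, so that each double-coset summand of $K_\ast(A\rtimes_r H)$ is induced by an honest inclusion $A_z\rtimes_r H_z\subseteq A\rtimes_r H$ rather than by an abstract Morita equivalence.
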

\begin{proof}
By additivity we may assume $A=\mathrm{Ind}_{H_0}^GA_0$ for $H_0\in \cF$ and an $H_0$-$C^*$-algebra $A_0$.
Write 
\[
A=\bigoplus_{[z] \in G/H_0} A_z
\]
where $A_z$ is the fiber of $A$ at $[z] \in G/H_0$. Note that $A_z$ is a $zH_0z^{-1}$-$C^*$-algebra, in particular, $A_e$ is the $H_0$-$C^*$-algebra $A_0$. By \cite[Proposition 2.6.8]{CELY}, the inclusion $A_e\to A$ induces an isomorphism
\[
 K_\ast(A_e\rtimes_rH_0) \cong   K_\ast(A\rtimes_rG).
\]
This isomorphism factors through $\colim_{G/H \in \OrFG}K_\ast(A\rtimes_rH)$ as 
\[
 K_\ast(A_e\rtimes_rH_0) \to K_\ast(A\rtimes_rH_0) \to \underset{G/H \in \OrFG}{\colim} K_\ast(A\rtimes_rH) \to   K_\ast(A\rtimes_rG).
\]
Our claim follows once we show that the map 
\begin{equation}\label{eq-show-surj}
K_\ast(A_e\rtimes_rH_0)  \to \underset{G/H \in \OrFG}{\colim} K_\ast(A\rtimes_rH)
\end{equation}
is surjective. Fix a subgroup $H\in \mathcal F$. Decomposing $G/H_0$ into $H$-orbits and using the 
decomposition of (\ref{eq-decompose-induced}), we obtain a decomposition
\begin{equation}\label{eq-double-cosets}
K_\ast(A\rtimes_rH) \cong \bigoplus_{[z] \in H\backslash G/H_0} K_\ast(A_z\rtimes_r H_z),
\end{equation}
where $H_z\coloneqq H\cap zH_0z^{-1}\in \cF$. 
In \eqref{eq-double-cosets}, the inclusions $K_*(A_z\rtimes_r H_z)\subseteq K_*(A\rtimes_r H)$ are induced by the natural inclusions $A_z\rtimes_r H_z\subseteq A\rtimes_r H$. 
In the colimit $\colim_{G/H \in \OrFG}K_\ast(A\rtimes_rH)$, the summand 
\[
	K_*(A_z\rtimes_r H_z)\subseteq K_*(A\rtimes_r H)
\]
gets via conjugation with $z^{-1}$ identified with the summand 
\[
	K_\ast(A_e\rtimes_rH'_z) \subset K_\ast(A\rtimes_rH'_z)
\]
corresponding to $[e]\in G/H_0$ and $H_z'=z^{-1}Hz\cap H_0$. 
But the elements in $\colim_{G/H \in \OrFG}K_\ast(A\rtimes_rH)$ coming from 
$K_*(A_e\rtimes_r H'_z)$
are certainly in the image of the map in \eqref{eq-show-surj} since
\[K_*(A_e\rtimes_r H'_z)\to \colim_{G/H \in \OrFG}K_\ast(A\rtimes_rH)\]
 factors through the $K$-theory map of the inclusion $A_e\rtimes_rH_z' \subseteq A_e\times_rH_0$.
\end{proof}

As a direct consequence of Lemma \ref{lem-colimit-isom}, we get
\begin{corollary}\label{cor-colimit-isom} Let $G$ be any discrete group, let $Z$ be a countable $G$-set, and let $A_0$ and $B$ be $C^*$-algebras with $A_0$ unital. Let $\dot\cJ_{A_0,B}^{Z}$ be the $G$-$C^*$-algebra 
\[
	\dot\cJ^{Z}_{A_0,B}= \bigoplus_{F\in \FINx(Z)} A_0^{\otimes Z-F}\otimes B^{\otimes F},
	\]
	so that $\cJ_{A_0,B}^Z=A_0^{\otimes Z}\oplus \dot\cJ^Z_{A_0,B}$ as in Definition \ref{defn-JB-A0}.
	Then the natural map
	\[ 
	 \underset{G/H \in \OrFG}{\colim}  K_\ast\left(\dot\cJ^Z_{A_0,B} \rtimes_rH\right) \to K_*\left(\dot\cJ^Z_{A_0,B}\rtimes_rG\right)
	\]
is an isomorphism for any family $\cF$ of subgroups of $G$ containing all the stabilizers of the $G$-action on $\FINx(Z)$. This applies in particular when $Z$ is a proper $G$-set and $\cF = \mathcal{FIN}$ is the family of finite subgroups.
 \qed
	\end{corollary}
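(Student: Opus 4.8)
The plan is to realize $\dot\cJ^Z_{A_0,B}$ as a direct sum of induced $G$-$C^*$-algebras and then quote Lemma \ref{lem-colimit-isom} verbatim. The key observation is that $\dot\cJ^Z_{A_0,B}$ is a $G$-$C_0(X)$-algebra for the \emph{discrete} $G$-set $X=\FINx(Z)$: its summand indexed by $F\in\FINx(Z)$ is precisely the fiber over $F$, and a group element $g$ carries the summand at $F$ isomorphically onto the summand at $gF$ by the obvious map, matching the translation action of $G$ on $\FINx(Z)$. Applying the decomposition \eqref{eq-decompose-induced} (that is, \cite[Theorem 3.4.13]{CELY}) to this $G$-$C_0(\FINx(Z))$-algebra, I would obtain a $G$-equivariant isomorphism
\[
\dot\cJ^Z_{A_0,B}\cong \bigoplus_{[F]\in G\backslash\FINx(Z)} \mathrm{Ind}_{G_F}^{G}\!\left(A_0^{\otimes Z-F}\otimes B^{\otimes F}\right),
\]
where $G_F=\Stab_G(F)$ and the fiber $A_0^{\otimes Z-F}\otimes B^{\otimes F}$ is regarded as a $G_F$-$C^*$-algebra. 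Since this identification is $G$-equivariant, it is compatible with the inclusions $\dot\cJ^Z_{A_0,B}\rtimes_r H\hookrightarrow \dot\cJ^Z_{A_0,B}\rtimes_r G$ defining the natural map, so the map in the corollary is the same as the one considered in Lemma \ref{lem-colimit-isom}.

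Next I would check the hypotheses of Lemma \ref{lem-colimit-isom}. By assumption the family $\cF$ contains every stabilizer $G_F$ of the $G$-action on $\FINx(Z)$, so each $G_F$ lies in $\cF$. Taking the index set $I=G\backslash\FINx(Z)$, the subgroups $H_i=G_F$, and the $G_F$-$C^*$-algebras $A_i=A_0^{\otimes Z-F}\otimes B^{\otimes F}$, Lemma \ref{lem-colimit-isom} applies directly and yields the asserted isomorphism ${\underset{G/H\in\OrFG}{\colim}\,K_\ast(\dot\cJ^Z_{A_0,B}\rtimes_r H)\cong K_\ast(\dot\cJ^Z_{A_0,B}\rtimes_r G)}$.

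For the final assertion I would verify that a proper $G$-set $Z$ forces all these stabilizers to be finite, so that $\cF=\mathcal{FIN}$ is an admissible choice. For $F\in\FINx(Z)$ the stabilizer $G_F$ permutes the finite set $F$, giving a homomorphism $G_F\to\mathrm{Sym}(F)$ whose kernel is $\bigcap_{z\in F}G_z$; this kernel is finite because $Z$ is proper and hence each point stabilizer $G_z$ is finite, while $\mathrm{Sym}(F)$ is finite, so $G_F$ is finite as well. There is little genuine obstacle here, since the corollary is a direct consequence of the lemma: the entire content is the passage from the $C_0(\FINx(Z))$-algebra structure to the induced-algebra decomposition, and the only point that requires a moment's care is the observation that finite-subset stabilizers are finite in the proper case.
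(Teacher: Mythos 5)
Your proposal is correct and follows exactly the route the paper intends: the paper states the corollary as a direct consequence of Lemma \ref{lem-colimit-isom}, with the implicit steps being precisely your identification of $\dot\cJ^Z_{A_0,B}$ with $\bigoplus_{[F]\in G\backslash\FINx(Z)}\mathrm{Ind}_{G_F}^G\left(A_0^{\otimes Z-F}\otimes B^{\otimes F}\right)$ via the $G$-$C_0(\FINx(Z))$-algebra structure and \cite[Theorem 3.4.13]{CELY}. Your verification that properness of $Z$ forces each $G_F$, $F\in\FINx(Z)$, to be finite (finite kernel of $G_F\to\mathrm{Sym}(F)$ since $F\neq\emptyset$, plus finiteness of $\mathrm{Sym}(F)$) is also the intended justification of the final assertion.
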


\begin{proof}[Proof of Theorem \ref{thm-AF-general}]\label{proof of AF theorem}
	We prove Theorem \ref{thm-AF-general} by considering three successively more general cases: 
	\begin{case}\label{case-1}
		$A$ satisfies the assumptions of Theorem \ref{thm-replace-by-JB-A0} for the unital inclusion $\iota\colon \bC\to A$. 
	\end{case}
	By assumption, there is a $C^*$-algebra $B$ and an element $\phi\in KK(B,A)$ such that 
	$\iota\oplus \phi\in KK(\bC\oplus B,A)$ is a $KK$-equivalence.
	Using the weak $K$-equivalence of $\cJ^Z_B$ and $A^{\otimes Z}$ constructed in Theorem \ref{thm-replace-by-JB-A0}, we may identify the maps in \eqref{eq-natural-maps}
	with the natural maps 
		\begin{equation}\label{eq-natural-maps-JB}
			C^*_r(G)\to \cJ^Z_B\rtimes_r G,\quad \cJ^Z_B\rtimes_r H\to \cJ^Z_B\rtimes_r G,
		\end{equation}
	where the first map is induced from the (non-unital) inclusion $\bC\hookrightarrow \cJ^Z_B$ corresponding to $F=\emptyset$. 
	We may therefore replace $A^{\otimes Z}$ by $\cJ^Z_B$ throughout the proof.
	The corresponding statement for $\cJ^Z_B$ then follows  from Corollary \ref{cor-colimit-isom}
	since by the decomposition $\cJ^Z_B=\bC\oplus\dot\cJ^Z_B$, we have
		\[
	K_\ast( \cJ^Z_B \rtimes_rG) \cong K_\ast(C_r^*(G))\oplus \underset{G/H \in \OrFING}{\colim}  K_\ast(\dot\cJ^Z_B \rtimes_rH)\\
	\]
and consequently a pushout diagram 
	\[
\xymatrix{
  \underset{G/H \in \OrFING}{\colim} K_\ast(C^*_r(H))  \ar[r] \ar[d] &  \underset{G/H \in \OrFING}{\colim}  K_\ast(\cJ^Z_B \rtimes_rH) \ar[d] \\
K_\ast(C^*_r(G))  \ar[r]  & K_\ast( \cJ^Z_B \rtimes_rG).
}
\] 
In the torsion free case this becomes 
	\[
K_*(\cJ^Z_B\rtimes_r G)  \cong K_*(C^*_r(G))\oplus_{K_\ast(\bC)} K_*(\cJ^Z_B)_G\cong \tilde K_*(C^*_r(G)) \oplus K_*(\cJ^Z_B)_G.
	\]
	\begin{case}\label{case-2}
		$A=M_n\otimes D$ where $D$ is as in Case \ref{case-1}.
	\end{case}
	By applying Case \ref{case-1} and localizing at $n$, we see that the maps in \eqref{eq-natural-maps} (for $D$ instead of $A$) induce a pushout diagram 
	\[
\xymatrix{
  \underset{G/H \in \OrFING}{\colim} K_\ast(C^*_r(H))[1/n]  \ar[r] \ar[d] & \underset{G/H \in \OrFING}{\colim}  K_\ast\left(D^{\otimes Z} \rtimes_rH\right)[1/n] \ar[d] \\
 K_\ast(C^*_r(G))[1/n]   \ar[r]  & K_\ast\left(D^{\otimes Z} \rtimes_rG\right)[1/n],
}
\] 
	and in the torsion-free case an isomorphism
		\[\tilde K_*(C^*_r(G))[1/n]\oplus K_*\left(D^{\otimes Z}\right)[1/n]_G\cong K_*\left(D^{\otimes Z}\rtimes_r G\right)[1/n].\]
	Here we have used that localization at $n$ commutes with taking colimits. By Theorem \ref{thm-localize}, the unital inclusion $D\to M_n\otimes D=A$ induces an isomorphism $K_*\left(D^{\otimes Z}\rtimes_r H\right)[1/n]\cong K_*\left(A^{\otimes Z}\rtimes_r H\right)$ for every subgroup $H$ of $G$. This finishes the proof of Case \ref{case-2}.

	\begin{case}\label{case-3}
		$A=\filcolim_k A_k$ where each $A_k$ is as in Case \ref{case-2}.
	\end{case}
	For each $k$, denote by $S_k\subseteq S$ the set of positive integers $n$ such that the unit $[1]\in K_0(A_k)$ is divisible by $n$. Note that we have $S=\bigcup_k S_k$. By Case \ref{case-2}, the conclusion of the theorem holds if we replace $A$ by $A_k$ and $S$ by $S_k$. Now the general case follows by taking the filtered colimit along $k$. 
	\end{proof}

\section{Rational and $k$-adic computations}\label{sec-modulo-k}
In this section we give systematic tools to compute the $K$-theory of noncommutative Bernoulli shifts up to localizing at a supernatural number $\frak n$ (i.e.~at the set of prime factors of $\frak n$).  The results apply to unital $C^*$-algebras $A$ for which the inclusion $\iota \colon \bC\to A$ does not induce a split injection $K_0(\bC)\to K_0(A)$ integrally, but a split injection 
\begin{equation}\label{eq-modulo-split-inj}
K_0(\iota)[1/\frak n]\colon K_0(\bC)[1/\mathfrak n]\hookrightarrow K_0(A)[1/\mathfrak n]
\end{equation}
after localizing at a supernatural number $\mathfrak n$.
Important special cases are $\mathfrak n=k^\infty$ for $k\in \N$ (Example \ref{ex-modulo-k}), or when $\mathfrak n=\prod_p p^{\infty}$ (Example \ref{ex-rational}). The latter case amounts to rational $K$-theory computations since in this case we have $L[1/\mathfrak n]\cong L\otimes_\Z\Q$ for any abelian group $L$.
We give two examples of when one of these situations naturally occurs:
\begin{example}[$M_\mathfrak n=M_{k^{\infty}}$]\label{ex-modulo-k}
	Let $A$ be a unital $C^*$-algebra that admits a finite-dimensional representation $A \to M_k(\bC)$
for some $k$. Then the unital inclusion
$\iota \colon \bC \to A$
induces a split-injection
$K_\ast(\bC)[1/k] \to K_\ast(A)[1/k]$.
Concrete examples are unital continuous trace $C^*$-algebras or subhomogeneous $C^*$-algebras.
\end{example}
\begin{example}[$M_\mathfrak n=\mathcal Q$]\label{ex-rational}
Let $A$ be a unital $C^*$-algebra for which the unit $[1]\in K_0(A)$ is not torsion, for instance let $A$ be unital and stably finite. Then the inclusion $\iota\colon \bC\to A$ induces a split injection $K_*(\bC)\otimes_\Z \Q\to K_*(A)\otimes_\Z \Q$. 
\end{example}

\begin{theorem}[c.f. Theorem \ref{thm-replace-by-JB-A0}]\label{thm-UCT-k}
	Let $G$ be a discrete group satisfying BCC, let $Z$ be a countable $G$-set and let $\mathfrak n$ be a supernatural number. Let $A$ be a unital $C^*$-algebra satisfying the UCT such that the unital inclusion $\bC\to A$ induces a split injection $K_*(\bC)[1/\mathfrak n]\to K_*(A)[1/\mathfrak n]$. Denote by $\tilde K_*(A)$ the cokernel of the injection $K_\ast(\bC) \to K_\ast(A)$ and let $B$ be any $C^*$-algebra satisfying the UCT with $K_*(B)\cong \tilde K_*(A)$. Then we have 
\begin{align*}
	&K_*\left(A^{\otimes Z}\rtimes_r G\right)[1/\mathfrak n]\\
	&\qquad\qquad\cong K_*\left(\cJ^Z_B\rtimes_r G\right)[1/\mathfrak n]\cong \bigoplus_{[F]\in G\backslash \FIN(Z)}K_*\left(B^{\otimes F}\rtimes_r G_F\right)[1/\mathfrak n].
\end{align*}
\end{theorem}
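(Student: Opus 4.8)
The plan is to imitate the proof of Corollary \ref{cor-UCT}, but carried out after tensoring everything with a UHF-algebra $M=M_{\mathfrak n}$, which realises the localization $[1/\mathfrak n]$ at the level of $C^*$-algebras and converts the merely localized split injection \eqref{eq-modulo-split-inj} into an honest split injection of $K$-theory groups. Since $[1/\mathfrak n]$ depends only on the set of primes dividing $\mathfrak n$, I may replace $\mathfrak n$ by $\mathfrak n^\infty$ and hence assume $\mathfrak n$ is of infinite type; then $M$ is nuclear and UCT, and by the K\"unneth theorem (using that $\Z[1/\mathfrak n]$ is flat) one has $K_*(D\otimes M)\cong K_*(D)[1/\mathfrak n]$ for every $C^*$-algebra $D$, while Theorem \ref{thm-localize} applies to any countable $G$-set $Z$. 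The first move is to apply Theorem \ref{thm-localize} with coefficient algebra $A^{\otimes Z}$ and the $G$-equivariant identification $(A\otimes M)^{\otimes Z}\cong A^{\otimes Z}\otimes M^{\otimes Z}$ to get $K_*\big(A^{\otimes Z}\rtimes_r G\big)[1/\mathfrak n]\cong K_*\big((A\otimes M)^{\otimes Z}\rtimes_r G\big)$.

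Next I apply Theorem \ref{thm-replace-by-JB-A0} to the unital algebra $\tilde A\coloneqq A\otimes M$ with the choices $A_0\coloneqq M$ and $\tilde B\coloneqq B\otimes M$. The unital inclusion $\iota\colon M\to\tilde A$ induces on $K$-theory the localization $\Z[1/\mathfrak n]\to K_*(A)[1/\mathfrak n]$ of the unit map, which is split injective by hypothesis with cokernel $\tilde K_*(A)[1/\mathfrak n]\cong K_*(\tilde B)$ (localization being exact). As $A$, $B$ and $M$ satisfy the UCT, so do $\tilde A$ and $\tilde B$, so the chosen splitting $K_*(\tilde B)\to K_*(\tilde A)$ of this short exact sequence is realised by some $\phi\in KK(\tilde B,\tilde A)$; by construction $\iota\oplus\phi$ induces an isomorphism on $K$-theory and is therefore a $KK$-equivalence. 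Theorem \ref{thm-replace-by-JB-A0} then delivers $K_*\big(\tilde A^{\otimes Z}\rtimes_r G\big)\cong K_*\big(\cJ^Z_{M,\tilde B}\rtimes_r G\big)$.

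To return to $\cJ^Z_B$, I use the $G$-equivariant identification $\cJ^Z_{M,\tilde B}\cong \cJ^Z_B\otimes M^{\otimes Z}$, which follows by rearranging tensor factors summand-by-summand via $M^{\otimes Z-F}\otimes(B\otimes M)^{\otimes F}\cong B^{\otimes F}\otimes M^{\otimes Z}$. A second application of Theorem \ref{thm-localize}, now with coefficient algebra $\cJ^Z_B$ and again the full group $G$, gives $K_*\big(\cJ^Z_{M,\tilde B}\rtimes_r G\big)\cong K_*(\cJ^Z_B\rtimes_r G)[1/\mathfrak n]$; combining this with the unconditional $C_0(\FIN(Z))$-decomposition of $\cJ^Z_B$ supplied by Theorem \ref{thm-replace-by-JB-A0}, together with the fact that localization commutes with direct sums, yields all three terms of the statement. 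I expect the middle step to be the main obstacle: one must check that $\iota\oplus\phi$ is a genuine $KK$-equivalence of the stabilized algebras, and this is precisely where the \emph{split} injectivity in \eqref{eq-modulo-split-inj} is indispensable — plain injectivity, which already holds integrally, would not produce the required isomorphism on $K$-theory. Routing the descent through $\cJ^Z_B$ rather than through the individual summands $B^{\otimes F}\rtimes_r G_F$ is a deliberate simplification, since it invokes Theorem \ref{thm-localize} only for $G$ itself and so avoids any appeal to Baum--Connes for the (possibly infinite) stabilizers $G_F$.
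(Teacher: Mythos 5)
Your proposal is correct and is essentially the paper's own proof: both replace $\mathfrak n$ by $\mathfrak n^\infty$ to assume infinite type, use Theorem \ref{thm-localize} to identify $K_*\left(A^{\otimes Z}\rtimes_r G\right)[1/\mathfrak n]$ with $K_*\left((A\otimes M_{\mathfrak n})^{\otimes Z}\rtimes_r G\right)$, build the $KK$-equivalence $(\bC\oplus B)\otimes M_{\mathfrak n}\to A\otimes M_{\mathfrak n}$ from the localized splitting via the UCT, and then apply Theorem \ref{thm-replace-by-JB-A0} with $A_0=M_{\mathfrak n}$. The only divergence is the final descent: the paper applies Theorem \ref{thm-localize} summand-by-summand over the stabilizers $G_F$, whereas you apply it once for $G$ with coefficient $\cJ^Z_B$ after the equivariant identification $\cJ^Z_{M_{\mathfrak n},B\otimes M_{\mathfrak n}}\cong \cJ^Z_B\otimes M_{\mathfrak n}^{\otimes Z}$ and then invoke the unconditional Green-imprimitivity decomposition; this is an equally valid repackaging which, as you note, avoids appealing to BCC for the subgroups $G_F$ (harmless in the paper's version, since BCC passes to subgroups).
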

\begin{proof} Replacing $\mathfrak n$ by $\mathfrak n^\infty$, we may assume that $\mathfrak n$ is of infinite type. 
By the UCT, there is a UCT $C^*$-algebra $B$ whose $K$-theory is isomorphic to the cokernel $\tilde K_*(A)$ of $K_*(\iota)$, and an element $\phi\in KK(B\otimes M_\mathfrak n,A\otimes M_\mathfrak n)$ which together with the inclusion $\iota_\mathfrak n\colon M_\mathfrak n\to A\otimes M_\mathfrak n$ induces a $KK$-equivalence
	\begin{equation*}\label{eq-localized-splitting}
		\iota_\mathfrak n\oplus \phi\in KK((\bC\oplus B)\otimes M_\mathfrak n,A\otimes M_\mathfrak n).
	\end{equation*}
We can thus apply Theorem \ref{thm-replace-by-JB-A0} for $A\otimes M_{\mathfrak n}$ in place of $A$, for $A_0=M_{\mathfrak n}$ and for $B\otimes M_{\mathfrak n}$ in place of $B$. We get
\begin{align*}
K_*\left(A^{\otimes Z}\rtimes_r G\right)[1/\mathfrak n] &\cong K_*\left(\left(A\otimes M_{\mathfrak n}\right)^{\otimes Z}\rtimes_r G\right)\\
& \cong K_\ast\left(\cJ_{M_{\mathfrak n}, B\otimes M_{\mathfrak n}}^{Z}\rtimes_r G\right)  \\
& \cong \bigoplus_{[F]\in G\backslash \FIN(Z)}K_*\left( \left(M_{\mathfrak n}^{\otimes Z-F}\otimes(B\otimes M_{\mathfrak n})^{\otimes F}\right)\rtimes_r G_F\right) \\
&\cong  \bigoplus_{[F]\in G\backslash \FIN(Z)}K_*\left(B^{\otimes F}\rtimes_r G_F\right)[1/\mathfrak n]
\end{align*}
where the first and last isomorphisms are obtained from Theorem \ref{thm-localize}. 
\end{proof}

Theorem \ref{thm-AF-general} has a counter-part as well:

\begin{theorem}\label{thm-modulo-k-general}
Let $G$ be a discrete group satisfying BCC, let $Z$ be a countable, proper $G$-set, and let $\mathfrak n$ a supernatural number. Let $A$ be a unital $C^*$-algebra which is a unital filtered colimit of $C^*$-algebras $A_k$ satisfying the assumptions of Theorem \ref{thm-UCT-k}. Then, the natural inclusions
\[
 C^*_r(H) \to C^*_r(G),  \,\,\, C^*_r(H)\to A^{\otimes Z}\rtimes_rH, 
 \]
 \[
C^*_r(G) \to A^{\otimes Z}\rtimes_rG, \,\,\,  A^{\otimes Z}\rtimes_rH \to A^{\otimes Z}\rtimes_rG,
\]
induce a pushout diagram 
\[
\xymatrix{
  \underset{G/H \in \OrFING}{\colim} K_\ast(C^*_r(H))[1/\mathfrak n]    \ar[r] \ar[d] &  \underset{G/H \in \OrFING}{\colim} K_\ast\left(A^{\otimes Z} \rtimes_rH\right)[1/\mathfrak n] \ar[d] \\
 K_\ast(C^*_r(G))[1/\mathfrak n]   \ar[r]  & K_\ast\left(A^{\otimes Z} \rtimes_rG\right)[1/\mathfrak n].
}
\] 
In particular, if $G$ is torsion-free, this pushout diagram reads
\[
\tilde K_\ast(C^*_r(G))[1/\mathfrak n]  \oplus   K_\ast\left(A^{\otimes Z}\right)_G[1/\mathfrak n]  \cong   K_\ast\left(A^{\otimes Z} \rtimes_rG\right)[1/\mathfrak n].
\]
\end{theorem}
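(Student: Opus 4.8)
The plan is to adapt the proof of Theorem \ref{thm-AF-general} almost verbatim, replacing the integral splitting machinery of Theorem \ref{thm-replace-by-JB-A0} by its localized counterpart Theorem \ref{thm-UCT-k} and inserting $[1/\mathfrak n]$ throughout. Because Theorem \ref{thm-UCT-k} already permits the unit of $A$ to be divisible (the splitting is only required after localizing at $\mathfrak n$), the matrix-amplification step (Case \ref{case-2} in the proof of Theorem \ref{thm-AF-general}) becomes unnecessary, and the argument reduces to two cases: first the building-block case in which $A$ itself satisfies the hypotheses of Theorem \ref{thm-UCT-k}, and then the filtered-colimit case.

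For the building-block case, I would first replace $\mathfrak n$ by $\mathfrak n^\infty$ so that $M_\mathfrak n$ is of infinite type, and then extract from the proof of Theorem \ref{thm-UCT-k} the weak $K$-equivalence $\cJ^Z_{M_\mathfrak n, B\otimes M_\mathfrak n}\to (A\otimes M_\mathfrak n)^{\otimes Z}$ in $KK^G$, obtained by applying Theorem \ref{thm-replace-by-JB-A0} to $A\otimes M_\mathfrak n$ (with $A_0=M_\mathfrak n$ and $B\otimes M_\mathfrak n$ in place of $B$). Combined with the localization isomorphism of Theorem \ref{thm-localize}, this yields a natural isomorphism
\[
K_\ast\!\left(A^{\otimes Z}\rtimes_r H\right)[1/\mathfrak n]\cong K_\ast\!\left(\cJ^Z_{M_\mathfrak n, B\otimes M_\mathfrak n}\rtimes_r H\right)
\]
for every finite subgroup $H\subseteq G$ (directly from the definition of a weak $K$-equivalence) and for $G$ itself (via Theorem \ref{thm-goingdown}, since $G$ satisfies BCC). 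Under this identification and the isomorphism $K_\ast(C^*_r(H))[1/\mathfrak n]\cong K_\ast(M_\mathfrak n^{\otimes Z}\rtimes_r H)$ of Theorem \ref{thm-localize}, the inclusion $C^*_r(H)\to A^{\otimes Z}\rtimes_r H$ corresponds to the map induced by the empty-set inclusion $M_\mathfrak n^{\otimes Z}\hookrightarrow \cJ^Z_{M_\mathfrak n, B\otimes M_\mathfrak n}$, so I may replace $A^{\otimes Z}$ throughout by $\cJ^Z_{M_\mathfrak n, B\otimes M_\mathfrak n}$. Using the splitting $\cJ^Z_{M_\mathfrak n, B\otimes M_\mathfrak n}= M_\mathfrak n^{\otimes Z}\oplus \dot\cJ^Z_{M_\mathfrak n, B\otimes M_\mathfrak n}$, the additivity of the colimit functor, and Corollary \ref{cor-colimit-isom} (applicable since $Z$ is proper, so all stabilizers on $\FINx(Z)$ are finite), I obtain the decomposition
\[
K_\ast\!\left(\cJ^Z_{M_\mathfrak n, B\otimes M_\mathfrak n}\rtimes_r G\right)\cong K_\ast(C^*_r(G))[1/\mathfrak n]\oplus \underset{G/H\in\OrFING}{\colim} K_\ast\!\left(\dot\cJ^Z_{M_\mathfrak n, B\otimes M_\mathfrak n}\rtimes_r H\right),
\]
from which, exactly as in Case \ref{case-1}, the square in the statement follows as a pushout diagram; the torsion-free specialization is read off as before.

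The filtered-colimit case then proceeds exactly as in Case \ref{case-3}: writing $A=\filcolim_k A_k$, the building-block case applies to each $A_k$, and since $K_\ast(-\rtimes_r G)$, localization at $\mathfrak n$, the orbit-category colimit, and the formation of pushouts all commute with filtered colimits, passing to the colimit along $k$ yields the desired diagram for $A$. The main obstacle I anticipate is the naturality bookkeeping: one must verify that the weak $K$-equivalence produced by Theorem \ref{thm-UCT-k} is compatible, simultaneously and functorially in $H$, with all four inclusions listed in the statement, so that the identifications descend to the orbit-category colimits and the four corners of the square line up. This is precisely where the naturality of Theorem \ref{thm-localize} and of the decomposition $\cJ^Z_{M_\mathfrak n,B\otimes M_\mathfrak n}=M_\mathfrak n^{\otimes Z}\oplus\dot\cJ^Z_{M_\mathfrak n,B\otimes M_\mathfrak n}$ must be invoked with care.
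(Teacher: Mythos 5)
Your proposal is correct and follows essentially the same route as the paper's own proof: reduce to the case where $\mathfrak n$ is of infinite type and $A$ itself satisfies the hypotheses of Theorem \ref{thm-UCT-k}, replace $(A\otimes M_\mathfrak n)^{\otimes Z}$ by the weakly $K$-equivalent algebra $\cJ^Z_{M_\mathfrak n,B\otimes M_\mathfrak n}$ via Theorem \ref{thm-replace-by-JB-A0}, and then conclude by combining Corollary \ref{cor-colimit-isom} with Theorem \ref{thm-localize}, passing to the filtered colimit at the end. The details you spell out (the splitting off of the $M_\mathfrak n^{\otimes Z}$ summand, the identification of the unit inclusions, and the naturality bookkeeping) are exactly what the paper's terse phrase ``as in the proof of Theorem \ref{thm-AF-general}'' compresses.
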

\begin{proof} 
Without loss of generality we may assume that $\mathfrak n$ is of infinite type and $A$ satisfies the assumption of Theorem \ref{thm-UCT-k}. 
Let $B,\phi$ and $\iota_\mathfrak n$ be as in the proof of Theorem \ref{thm-UCT-k}.
In particular, Theorem \ref{thm-replace-by-JB-A0} applies so that $(A\otimes M_{\mathfrak n})^{\otimes Z}$ is weakly $K$-equivalent to $\mathcal J_{M_\mathfrak n,B\otimes M_\mathfrak n}^Z$. 
As in the proof of Theorem \ref{thm-AF-general}, the Theorem now follows from the combination of Corollary \ref{cor-colimit-isom} and Theorem \ref{thm-localize}.
\end{proof}

\begin{example} Let $p\in C(X)\otimes M_n$ be a rank-$k$ projection over $C(X)$ corresponding to some vector bundle on $X$. Let $A=p(C(X)\otimes M_n) p$ be the associated homogeneous $C^*$-algebra. Of course, $A$ is Morita-equivalent to $C(X)$ but the unit-inclusion $\iota\colon \bC \to A$ is not split-injective in general; it corresponds to the inclusion $\Z[p] \to K^0(X)$. On the other hand, $A$ has a $k$-dimensional irreducible representation and therefore satisfies  the assumptions of Theorem \ref{thm-UCT-k}. Therefore, we get
\[
K_*\left(A^{\otimes G}\rtimes_r G\right)[1/k]\cong \bigoplus_{[F]\in G\backslash \FIN}K_*\left(B^{\otimes F}\rtimes_r G_F\right)[1/k]
\]
where $B$ is any UCT $C^*$-algebra whose $K$-theory is isomorphic to the cokernel of $K_*(\iota)$.
\end{example}

As an application to Theorem \ref{thm-UCT-k}, we obtain a  proof
 of the fact that Bernoulli shifts by finite groups rarely have the Rokhlin property (see \cite[Definition 3.1]{IzumiRokhlin}). This result is possibly known to experts and we would like to thank N. C. Phillips for mentioning it to us.

\begin{corollary}\label{cor-Rokhlin}
	Let $A$ be a unital $C^*$-algebra satisfying the UCT such that $[1]\in K_0(A)$ is not torsion (for instance, suppose that $A$ is stably finite). Let $G\neq \{e\}$ be a finite group and let $Z$ a $G$-set. Then the Bernoulli shift of $G$ on $A^{\otimes Z}$ does not have the Rokhlin property. 
\end{corollary}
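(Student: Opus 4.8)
The plan is to derive a contradiction by comparing two incompatible descriptions of $K_\ast\left(A^{\otimes Z}\rtimes_r G\right)$ as a module over the representation ring $R(G)=K_0(C^*(G))$ of the finite group $G$. So first I would assume that the Bernoulli shift $\alpha$ has the Rokhlin property and set $n=|G|\geq 2$. Since $G$ is finite it satisfies BCC, and since $[1]\in K_0(A)$ is not torsion, Example \ref{ex-rational} shows that the unital inclusion $\bC\hookrightarrow A$ induces a split injection $K_\ast(\bC)\otimes_\Z\Q\to K_\ast(A)\otimes_\Z\Q$. Thus Theorem \ref{thm-UCT-k} applies with $\mathfrak n=\prod_p p^\infty$ (so that $[1/\mathfrak n]=\otimes_\Z\Q$) and gives, after rationalising,
\[
K_\ast\left(A^{\otimes Z}\rtimes_r G\right)\otimes_\Z\Q\cong \bigoplus_{[F]\in G\backslash\FIN(Z)} K_\ast\left(B^{\otimes F}\rtimes_r G_F\right)\otimes_\Z\Q.
\]
Tracing through the construction in Theorem \ref{thm-replace-by-JB-A0}, the summand indexed by $F=\emptyset$ is $K_\ast(C^*(G))\otimes_\Z\Q=R(G)\otimes_\Z\Q$, and its inclusion is precisely the map $\iota_\ast$ induced on $K$-theory by the equivariant unital embedding $\bC\hookrightarrow A^{\otimes Z}$ (equivalently $C^*(G)\hookrightarrow A^{\otimes Z}\rtimes_r G$). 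In particular $\iota_\ast\otimes_\Z\Q$ is split injective, so its image has $\Q$-dimension equal to the number $c\geq 2$ of irreducible representations of $G$.

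On the other hand, via the Green--Julg isomorphism $K_\ast\left(A^{\otimes Z}\rtimes_r G\right)\cong KK^G(\bC,A^{\otimes Z})$ the target is naturally a module over $R(G)=KK^G(\bC,\bC)$, with respect to which $\iota_\ast$ is linear; hence $\iota_\ast([\pi])=[\pi]\cdot\iota_\ast(1)$ for every representation $\pi$. The second, harder, ingredient is that the Rokhlin property makes $\alpha$ \emph{$K$-theoretically free}: the Rokhlin projections furnish a unital $G$-equivariant $\ast$-homomorphism from $C(G)$ (with the translation action) into the central sequence algebra $(A^{\otimes Z})_\omega\cap(A^{\otimes Z})'$, and by Izumi's analysis the dual action on $A^{\otimes Z}\rtimes_r G$ is approximately representable. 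As for genuinely free actions, I expect this to force the $R(G)$-action on $K_\ast\left(A^{\otimes Z}\rtimes_r G\right)$ to factor through the augmentation (dimension) homomorphism $\dim\colon R(G)\to\Z$, $[\pi]\mapsto\dim\pi$. Granting this, $\iota_\ast([\pi])=\dim(\pi)\,\iota_\ast(1)$, so the image of $\iota_\ast\otimes_\Z\Q$ lies in $\Q\cdot\iota_\ast(1)$ and has $\Q$-dimension at most $1$. Comparing the two computations yields $c\leq 1$, contradicting $c\geq 2$, and the corollary follows.

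The main obstacle is exactly this middle step: establishing that $KK^G(\bC,A^{\otimes Z})$ is an $R(G)$-module on which $R(G)$ acts through $\dim$. I would derive it from the approximate representability of the dual action, using that the equivariant embedding of $(C(G),\mathrm{lt})$ into the central sequences lets one replace $\iota$, after passing to the crossed product, by the inclusion $C^*(G)\hookrightarrow C(G)\rtimes_r G\cong M_n$, i.e. by the regular representation, on which $R(G)$ visibly acts through $\dim$. The delicate point is transferring this central-sequence statement into an honest identity in $K_\ast\left(A^{\otimes Z}\rtimes_r G\right)$; phrasing everything through the $R(G)$-module structure (rather than through projections in an ultrapower) should avoid the bookkeeping and let one invoke the localisation/freeness results directly. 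It is worth emphasising that it is the Bernoulli structure, through Theorem \ref{thm-UCT-k}, that forces the entire representation ring $R(G)$ to survive rationally as a direct summand, and this is what clashes with $K$-theoretic freeness; for general Rokhlin actions such as the model action on $M_{n^\infty}$ no such summand appears and, consistently, no contradiction arises.
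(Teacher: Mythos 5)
Your first step is correct and coincides with the paper's: by Example \ref{ex-rational} and Theorem \ref{thm-UCT-k} (applied rationally), the unital inclusion $C^*_r(G)\hookrightarrow A^{\otimes Z}\rtimes_r G$ induces a split injection $K_*(C^*_r(G))\otimes_\Z\Q\hookrightarrow K_*\left(A^{\otimes Z}\rtimes_r G\right)\otimes_\Z\Q$, whose source has $\Q$-dimension equal to the number $c\geq 2$ of conjugacy classes of $G$. Your intended contradiction (the image should be at most one-dimensional) is also the right one. But the middle step --- that the Rokhlin property forces the $R(G)$-module structure on $K_*\left(A^{\otimes Z}\rtimes_r G\right)$, or at least the image of $\iota_*$, to factor through the dimension homomorphism --- is exactly the crux, and your proposal does not prove it: you write that you ``expect'' approximate representability of the dual action to force this, and you yourself flag that ``the delicate point is transferring this central-sequence statement into an honest identity'' in $K$-theory. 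That transfer is a genuine gap, not bookkeeping. The Rokhlin data live in a central sequence algebra (or are only approximately equivariant/central projections in $A^{\otimes Z}$), and $K$-theory does not pass through ultrapowers or central sequence algebras in any automatic way; as stated, your argument establishes nothing about classes in $K_*\left(A^{\otimes Z}\rtimes_r G\right)$ itself.

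The missing ingredient, which is how the paper closes this gap, is Phillips' equivariant semiprojectivity of $C(G)$ \cite[Theorem 2.6]{Phillips}: it upgrades the approximate (or central-sequence) Rokhlin data to an honest unital $G$-equivariant $*$-homomorphism $C(G)\to A^{\otimes Z}$. Once you have this, no module-theoretic machinery is needed: the split injection above factors as
\[
K_*(C^*_r(G))\otimes_\Z\Q\to K_*\left(C(G)\rtimes_r G\right)\otimes_\Z\Q\to K_*\left(A^{\otimes Z}\rtimes_r G\right)\otimes_\Z\Q ,
\]
and since $C(G)\rtimes_r G\cong M_{|G|}(\bC)$ has rational $K$-theory of dimension one, the composition cannot be injective when $c\geq 2$. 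This is precisely your ``factor through the regular representation, on which $R(G)$ acts through $\dim$'' statement, but realized by an exact equivariant $*$-homomorphism rather than by an unproven $K$-theoretic freeness principle. So: right skeleton and right dimension count, but the proof as proposed is incomplete, and the clean repair is to invoke \cite{Phillips} instead of Izumi-style approximate representability.
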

\begin{proof}
	It follows from Theorem \ref{thm-UCT-k} that the inclusion $C^*_r(G)\hookrightarrow A^{\otimes Z}\rtimes_r G$ induces a split injection 
		\begin{equation}\label{eq-rationally-injective}
			K_*(C^*_r(G))\otimes_\Z\Q\hookrightarrow K_*\left(A^{\otimes Z}\rtimes_r G\right)\otimes_\Z\Q.
		\end{equation}
	Now assume that the action of $G$ on $A^{\otimes Z}$ has the Rokhlin property. Using \cite[Theorem 2.6]{Phillips}, we can find a unital equivariant $*$-homomorphism $C(G)\to A^{\otimes Z}$.
	But then we can factor the map in \eqref{eq-rationally-injective} as the composition
	\[K_*(C^*_r(G))\otimes_\Z \Q\to K_*(C(G)\rtimes_r G)\otimes_\Z\Q\to K_*\left(A^{\otimes Z}\rtimes_r G\right)\otimes_\Z\Q,\]
	which is never injective unless $G=\{e\}$.
\end{proof}


\bibliographystyle{alpha}
\def\cprime{$'$}

\end{document}